\theoremstyle{plain}
\newtheorem{thm}{Theorem}[section]
\newtheorem{lem}[thm]{Lemma}
\theoremstyle{definition}
\theoremstyle{remark}
\newtheorem{remark}{Remark}
\newcommand{\D}{D} 
\renewcommand{\O}{\Omega} 
\newcommand{\boldalpha}{\boldsymbol \alpha} 
\newcommand{\boldbeta}{\boldsymbol \beta} 
\renewcommand{\u}{u} 
\newcommand{\uhat}{\widehat \u} 
\newcommand{\vvv}{v} 
\newcommand{\un}{\u_n} 
\newcommand{\unn}{\u} 
\newcommand{\vn}{\vvv_n} 
\newcommand{\vnn}{\vvv} 
\newcommand{\g}{g} 
\newcommand{\gtilde}{\widetilde g} 
\newcommand{\xbold}{\textbf x} 
\newcommand{\Abold}{\textbf A} 
\newcommand{\Babuska}{Babu{\v{s}}ka } 
\renewcommand{\c}{c} 
\DeclareMathSymbol{\cacca}{\mathalpha}{operators}{"4F}
\newcommand{\Omicron}{\mathcal \cacca} 
\newcommand{\V}{V} 
\newcommand{\Vg}{\V_\g} 
\newcommand{\Vzero}{\V_0} 
\newcommand{\Vn}{\V_n} 
\newcommand{\VE}{\V^{\Delta}(\E)} 
\newcommand{\Vng}{\V_{n,\g}} 
\newcommand{\Vnzero}{\V_{n,0}} 
\newcommand{\taun}{\mathcal T _ n} 
\newcommand{\tautilden}{\widetilde {\mathcal T}} 
\newcommand{\E}{K} 
\newcommand{\Ehat}{\widehat \E} 
\newcommand{\T}{T} 
\newcommand{\e}{s} 
\newcommand{\p}{p} 
\newcommand{\pE}{\p} 
\newcommand{\pEhat}{\p_{\Ehat}} 
\newcommand{\pe}{\p} 
\newcommand{\q}{q} 
\newcommand{\qhat}{\widehat \q} 
\newcommand{\pbold}{\mathbf \p} 
\newcommand{\card}{\text{card}} 
\newcommand{\dof}{\text{dof}} 
\renewcommand{\a}{a} 
\newcommand{\an}{\a_n} 
\newcommand{\aE}{\a^\E} 
\newcommand{\anE}{\a_n^\E} 
\newcommand{\Har}{\mathbb H} 
\newcommand{\Pinabla}{\Pi ^{\nabla}} 
\newcommand{\PinablaE}{\Pi ^{\nabla,\E}} 
\renewcommand{\S}{S} 
\newcommand{\SE}{\S^\E} 
\newcommand{\upi}{\u_{\pi}} 
\newcommand{\uI}{\u_I} 
\newcommand{\dist}{\text{dist}} 
\newcommand{\s}{m} 
\newcommand{\sE}{\s} 
\newcommand{\uGL}{\u_{\text{GL}}} 
\newcommand{\utildeGL}{\widetilde\u_{\text{GL}}} 
\newcommand{\zerobold}{\mathbf 0} 
\newcommand{\h}{h} 
\newcommand{\n}{\mathbf n} 
\newcommand{\hE}{\h_\E} 
\newcommand{\NeE}{N_\e^\E} 
\title{The harmonic virtual element method: stabilization and exponential convergence for the Laplace problem on polygonal domains}
\date{}
\author{
\normalsize{
A. Chernov
\thanks{Inst. f\"ur Mathematik, C. von Ossietzky Universit\"at Oldenburg, E-mail: {\tt alexey.chernov@uni-oldenburg.de}},
L. Mascotto
\thanks{Fakult\"at f{\"u}r Mathematik,  Universit\"at Wien, E-mail: {\tt lorenzo.mascotto@unimi.it}}
}}
\begin{document}
\maketitle

\begin{abstract}
{We introduce the harmonic virtual element method (harmonic VEM), a modification of the virtual element method (VEM) \cite{VEMvolley} for the approximation of the 2D Laplace equation using polygonal meshes.
The main difference between the harmonic VEM and the VEM is that in the former method only boundary degrees of freedom are employed.
Such degrees of freedom suffice for the construction of a proper energy projector on (piecewise harmonic) polynomial spaces.
The harmonic VEM can also be regarded as an ``$H^1$-conformisation'' of the Trefftz discontinuous Galerkin-finite element method (TDG-FEM) \cite{hmps_harmonicpolynomialsapproximationandTrefftzhpdgFEM}.
We address the stabilization of the proposed method and develop an $\h\p$ version of harmonic VEM for the Laplace equation on polygonal domains.
As in Trefftz DG-FEM, the asymptotic convergence rate of harmonic VEM is exponential and reaches order $\mathcal O ( \exp(-b\sqrt[2]{N}))$, where $N$ is the number of degrees of freedom.
This result overperformes its counterparts in the framework of $\h\p$ FEM \cite{SchwabpandhpFEM} and $\h\p$ VEM \cite{hpVEMcorner}, where the asymptotic rate of convergence is of order $\mathcal O ( \exp(-b\sqrt[3]{N})  )$.}
{virtual element method; polygonal meshes; $\h\p$ Galerkin methods; Trefftz methods; Laplace equation; harmonic polynomials.}
\end{abstract}

\section {Introduction} \label{section introduction}
 {\uccode`A=97
    \uppercase{\typeout{== A ==}}}
In this work, we deal with the approximation of the Laplace equation on polygonal domains based on a novel method, whose main advantage is the fact of having a very small number of degrees of freedom if compared to standard finite element methods (FEM).
This is not of course the first attempt to approximate a Laplace problem with methods based on approximation spaces having small dimension.
Among the other methods available, we limit ourselves to recall only two of them.

The first one is the boundary element method (BEM) \cite{SauterSchwab_BEMbook}. BE spaces consist of functions defined \emph{only} on the boundary of the computational domain. Clearly, the BE space on a boundary mesh of characteristic mesh size $\h$
contains many less degrees of freedom than the corresponding FE space on a volume mesh having the same characteristic mesh size $\h$.
This comes at a price of a fully populated matrix in the resulting system of linear equations, expensive quadrature rules needed for evaluation of matrix entries and expensive numerical reconstruction of the solution in the interior of the computational domain.
These difficulties can be partially alleviated by using advanced \emph{fast boundary element methods} (see e.g. \cite{SauterSchwab_BEMbook} and references therein), that usually results in nontrivial algorithms that are not easy to implement.

A second (and more recent) approach is given by the so-called Trefftz discontinuous Galerkin-FEM (TDG-FEM), which was introduced in \cite{li2012negative, li2006local}
and was generalized to its $\h\p$ version in \cite{hmps_harmonicpolynomialsapproximationandTrefftzhpdgFEM}
(we recall that an $\h\p$ Galerkin method is a method where the convergence of the error is achieved by a proper combination of mesh refinements and an increase of the local degree of accuracy and thereby of the dimension of local spaces).
TDG-FE spaces consist of piecewise harmonic polynomials over a decomposition of the computational domain into triangles and quadrilaterals.
As a consequence, the resulting method has a DG structure, since the dimension of harmonic polynomial spaces is not large enough for enforcing global continuity of the discretization space.
We also point out that in \cite{hmps_harmonicpolynomialsapproximationandTrefftzhpdgFEM} it was provided a result concerning $\h\p$ approximation of harmonic functions
by means of harmonic polynomials, following the ideas of the pioneering works \cite{melenk_phdthesis, melenk1999operator}.

The advantage of TDG-FEM with respect to standard FEM is that the dimension of local spaces considerably reduces still keeping the optimal rate of convergence of the error.
More precisely, for a fixed local polynomial degree $\p$, the dimension of the local TDG-FE space is equal to $2\p+1 \approx 2\p$, whereas the dimension of local FE spaces is $\frac{(\p+1)(\p+2)}{2} \approx \frac{\p^2}{2}$.
This advantage is possible since the degrees of freedom that are removed in TDG-FEM are superfluous  for the approximation of a Laplace equation.
We emphasize that employing piecewise harmonic polynomials leads inevitably to a discontinuous method, which is therefore not anymore $H^1$-conforming.

The approach in \cite{hmps_harmonicpolynomialsapproximationandTrefftzhpdgFEM} can be generalized easily to polygonal TDG-FEM, following e.g. \cite{antonietti2016reviewDG}.
Polygonal methods received an outstanding interest in the last decade by the scientific community due to the high flexibility in dealing with nonstandard geometries.
Among the others, we mention the following methods:
hybrid high--order methods \cite{dipietroErn_hho}, mimetic finite differences \cite{BLM_MFD}, hybrid DG-methods \cite{cockburn_HDG},
polygonal FEM \cite{SukumarTabarraeipolygonalintroduction, talischi2010polygonal, GilletteRandBajaj_generalizedbarycentric}, polygonal DG-FEM \cite{cangianigeorgoulishouston_hpDGFEM_polygon},
BEM-based FEM \cite{Weisser_basic}.

The virtual element method (VEM) is an alternative approach enabling computation of polygonal (polyhedral in 3D) meshes \cite{VEMvolley, hitchhikersguideVEM}.
It is based on globally continuous discretization spaces that generally  consist locally of Trefftz-like functions.
More precisely, the key idea of VEM is that trial and test spaces consists of functions that are solutions to local PDE problems in each element.
Since these local problems do not admit closed form solutions, the bilinear form, and thereby the entries of the stiffness matrix, are not computable in general.
The computable version involves an approximate discrete bilinear form consisting of two additive parts: one that involves local projections on polynomial spaces and a \emph{computable} stabilizing bilinear form.
We emphasize that the approximate discrete bilinear form can be evaluated without explicit knowledge of local basis functions in the interior of the polygonal element: an indirect description via the associated set of internal degrees of freedom suffices.

In \cite{hpVEMbasic}, the $\h\p$ version of VEM for the Poisson problem with quasi-uniform meshes and constant polynomial degree was studied, whereas,
in \cite{hpVEMcorner}, the $\h\p$ version of VEM for the approximation of corner singularities was discussed.
Besides, a multigrid algorithm for the pure $\p$ version of VEM was investigated in \cite{pVEMmultigrid}.
Also, a study regarding the condition number of the stiffness matrix for the $\h\p$ version of 2D and 3D VEM is the topic of \cite{fetishVEM} and \cite{fetishVEM3D}, respectively.

The aim of the present work consists in modifying the $\h\p$ virtual element space of \cite{hpVEMcorner}, trying at the same time to mimick the ``harmonic'' approach of TDG-FEM.
The arising method, which goes under the name of harmonic VEM, makes use only of \emph{boundary} degrees of freedom (the internal degrees of freedom of the standard VEM can be omitted).
More precisely, functions in the harmonic virtual element space are harmonic reconstructions of piecewise continuous polynomial traces over the boundary of the polygons in the polygonal decomposition of the computational domain.
It is immediate to check that the associated space contains (globally discontinuous) piecewise harmonic polynomials.

The stiffness matrix is not computed exactly on the harmonic virtual element space. Its construction is based on two ingredients:
a local energy projector on the space of harmonic polynomials and a stabilizing bilinear form, which only approximates the continuous one and which is computable on the complete space.
As in standard VEM, the projectors and stabilizing bilinear forms are computed only by means  of the degrees of freedom, without the need of knowing trial and test functions in the interior of individual elements explicitly.
Importantly, the implementation of the harmonic VEM does not require two-dimensional quadrature formulas.

The main result of the paper states that, similarly to the $\h\p$ version of TDG-FEM, the asymptotic convergence rate for the energy error is proportional to $\exp (-b \sqrt[2]{N})$, where $N$ is the dimension of the global discretization space.
This result is an improvement of the analogous statement in the framework of the $\h\p$ FEM \cite{SchwabpandhpFEM} and $\h\p$ VEM \cite{hpVEMcorner},
where the rate of decay of the error is proportional to $ \exp (-b \sqrt[3]{N})$. 
As a byproduct of the main result we prove in Sections \ref{subsection stability} and \ref{subsection a p independent stabilization}
novel stabilization estimates that are much sharper than in the general $hp$-VEM \cite{hpVEMcorner} and that are interesting on their own.

We state the difference between the two approaches, namely the TDG-FEM \cite{hmps_harmonicpolynomialsapproximationandTrefftzhpdgFEM} and the harmonic VEM.
The TDG-FEM is a discontinuous method, but local spaces are made of explicitly known functions, i.e. (harmonic) polynomials;
besides, only \emph{internal} degrees of freedom on each element are considered.
The harmonic VEM is a $H^1$-conforming method which only employs \emph{boundary} degrees of freedom;
the basis functions are not known explicitly, but the stiffness matrix can be efficiently built employing only the degrees of freedom.
Both methods are characterized by the fact that the space of piecewise harmonic polynomials is contained in the local approximation spaces;
in fact, the TDG-FE space \emph{is} the space of (globally discontinuous) piecewise harmonic polynomials, while the harmonic virtual element space is richer, in general.

We emphasize that the formulation of the $\h\p$ harmonic VEM presents some improvements with respect to the standard $\h\p$ VEM \cite{hpVEMcorner}.
Less assumptions on the geometry of the polygonal decomposition are required, better bounds for the stabilization are presented and a very tidy result, concerning approximation by functions in the harmonic virtual element space, is proven.

In this paper, we \emph{only} investigate in details the $\h\p$ version of harmonic VEM, that is the method of choice for an efficient
approximation of corner singularities.
A modification of Section \ref{subsection approximation by functions in the Harmonic Element Space}, along with the arguments in \cite{melenk_phdthesis}, leads
to $\h$ approximation results. For the $\p$ version of harmonic VEM, instead, one has to deal with two issues that lie outside the scope of the present paper.

The first one is the pollution effect due to the stabilization of the method, which is typical also of the $\p$ version of VEM, see Lemmata \ref{lemma Strang harmonic} and \ref{lemma stability bounds}, which in fact can be overcome at the price of having a stabilization challenging to be computed, as explained in Section \ref{subsection a p independent stabilization}.

The second one is that the $\p$ approximation estimates by harmonic functions depend on the shape of the domain of approximation via the so-called ``\emph{exterior cone condition}'',
see \cite[Theorem 2]{babumelenk_harmonicpolynomials_approx}. These matters introduce additional technicalities which will not be addressed in this paper.

It is worth mentioning that the theoretical framework of \cite{melenk_phdthesis, MoiolaPhDthesis} for the analysis of methods based on harmonic polynomials can be seen as an intermediate step towards more gruelling challenges.
More precisely, one can use the so-called Vekua theory \cite{vekua1967new} to shift the results related to the approximation of harmonic functions (i.e. for functions belonging to the kernel of the Laplace operator) by means of harmonic polynomials,
to the results related to the approximation of functions in the kernel of more general differential operators by means of \emph{generalized harmonic} polynomials.
A very important example is provided by the approximation of functions in the kernel of the Helmholtz operator; this was investigated in deep in the framework of partition of unity methods (PUM) \cite{melenk_phdthesis} and TDG-FEM \cite{MoiolaPhDthesis}.
An extension of the harmonic VEM to Trefftz-like VEM for the Helmholtz equation has been recently investigated in \cite{ncTVEM_Helmholtz}.

The outline of the paper is the following. In Section \ref{section the model problem}, we present the model problem and we recall some regularity properties of its solution.
In Section \ref{section harmonic virtual element method with nonuniform degrees of accuracy}, we introduce the harmonic VEM; in particular, we discuss the contruction of the stiffness matrix 
along with the construction of a proper stabilization of the method and of an energy projector from local harmonic virtual element spaces into spaces of (piecewise) harmonic polynomials.
After having defined the concept of ``$\h\p$ graded polygonal meshes'', we prove approximation estimates by harmonic polynomials and functions in the harmonic virtual element space in Section \ref{section exponential convergence}.
This approximation scheme results in exponential convergence of the energy error with respect to the total number of degrees of freedom.
Numerical tests validating the theoretical results,
together with a numerical comparison between the performances of the $\h\p$ harmonic VEM and the $\h\p$ VEM, are shown in Section \ref{section numerical results}.

Throughout the paper, we write $f \lesssim g$ for two positive quantities $f$ and $g$ depending on a discretization parameter (typically $h$ or $p$) if there exists a parameter-independent positive constant $c$ such that $f \leq c g$ holds for all values of the parameter. We write $f \approx g$ if $f \lesssim g$ and $g \lesssim f$ holds.

We denote by $\mathbb P_{\p}(\D)$ the spaces of polynomials of degree $\p \in \mathbb N_0$ on a domain $\D$ in one or two variables (depending on the Hausdorff dimension of $\D$).
Finally, we denote by $\Har_\p (\D)$ the space of harmonic polynomials of degree $\p \in \mathbb N_0$ on $\D \subseteq \mathbb R^2$.

\section {The model problem and the functional setting} \label{section the model problem}
Throughout the paper, we will employ the standard notation for Lebesgue and Sobolev spaces on a domain $\D$, see \cite{adamsfournier}. In particular, we denote by $L^2(\D)$ the Lebesgue space of square integrable functions
and by $H^s(\D)$, $s \in \mathbb R_+$, the Sobolev space $W^{2,s}(\D)$.
We set $\Vert \cdot \Vert_{0,\D}$ the standard Lebesgue norm and $\Vert \cdot \Vert_{s,\D}$ and $\vert \cdot \vert_{s,\D}$ the Sobolev norms and seminorms, respectively.

We will use the following notation for partial derivatives:
\begin{equation} \label{notation partial derivatives}
D^{\boldalpha} \u = \partial^{\alpha_1,\alpha_2} \u,\quad \text{where } \boldalpha = (\alpha_1, \alpha_2) \in \mathbb N_0^2.
\end{equation}
We will also write
\begin{equation} \label{derivative from index to multiindices}
\vert D^k \u \vert^2 = \sum_{\boldalpha \in \mathbb N_0^2,\; \vert \boldalpha \vert =k} \vert D^{\boldalpha} \u \vert^2.
\end{equation}
Moreover, we will employ the Sobolev weighted spaces and countably normed spaces defined e.g. in \cite{babuskaguo_curvilinearhpFEM}. For the sake of completeness, we recall their definition.
Given $\O \subseteq \mathbb R^2$ a bounded and simply connected polygonal domain, let $\mathcal V_\O$ be the number of vertices of the closure of $\O$ and let $\{\Abold _i\}_{i=1}^{\mathcal V_\O}$ be the set of such vertices.
We introduce the weight function
\begin{equation} \label{weight function vertices}
\Phi_{\boldbeta} (\xbold) := \prod _{i=1}^{\mathcal V_\O} \min(1,\vert \xbold -\Abold_i \vert)^{\beta_i},
\end{equation}
where $\vert \cdot \vert$ denotes the Euclidean norm and $\boldbeta \in [0,1)^{\mathcal V_\O}$ is the weight vector.
We will write $\Phi_n$, $n \in \mathbb N_0$, meaning that we will consider a weight vector $\boldbeta$ with constant entries $\boldbeta_i=n$, $\forall i=1,\dots, \mathcal V_\O$.
Furthermore, we will denote the particular weight function $\Phi_1$ by $\Phi$.

The weighted Sobolev space $H_{\boldbeta}^{m,\ell}(\O)$, $\boldbeta \in [0,1)^{\mathcal V_\O}$, $m,\ell \in \mathbb N_0$, $m \ge \ell$, are defined as
the completion of $\mathcal C^{\infty}(\overline \O)$ with respect to the norms
\begin{equation} \label{weighted Sobolev norms}
\Vert \u \Vert^2_{H^{m,\ell}_{\boldbeta} (\O)} := \Vert \u \Vert^2_{\ell-1, \O} + \vert \u \vert^2_{H^{m,\ell}_{\beta}(\O)} := \Vert \u \Vert^2_{\ell-1, \O} + \sum_{k=\ell}^m \Vert \Phi_{\boldbeta + k -\ell} \,\vert D^k\u \vert \Vert^2_{0,\O}.
\end{equation}
With an abuse of notation, the sum between the vector $\boldbeta$ and the scalar $k-\ell$ is meant to be
\[
\boldbeta + k - \ell \in \mathbb R^{\mathcal V _ \O},\quad \quad (\boldbeta + k - \ell)_i = \boldbeta_i + k - \ell,\quad i=1,\dots, \mathcal V_\O.
\]
Given $\ell \in \mathbb N_0$ and $\boldbeta \in [0,1)^{\mathcal V _\O}$, we define the countably normed spaces (or \Babuska spaces) as
\begin{equation} \label{countably normed spaces}
\begin{split}
& \mathcal B ^\ell_{\boldbeta} (\O) := \left\{ \u \in H^{m, \ell}_{\boldbeta} (\O)\; \forall m \ge \ell \ge 0 \text{ with }
							\Vert \Phi_ {\boldbeta + k - \ell}\, \vert D^k\u \vert \Vert_{0,\O}\le c_\u d_\u ^{k-\ell} (k-\ell)!\, \forall k \in \mathbb N_0,\, k \ge \ell   \right\},\\
& \Omicron^2_{\boldbeta} (\O) := \left\{ \u \in H^{m,2}_{\boldbeta} (\O)\; \forall m \ge 2  \text{ with } \vert D^k \u (\xbold) \vert  \le c_\u d_\u^k k! \Phi_{\boldbeta + k -1}^{-1} (\xbold)\, \forall k \in \mathbb N_0,\; \forall \xbold \in \overline \O  \right\},\\
\end{split}
\end{equation}
where $c_\u$ and $d_\u$ are two constants greater than or equal to 1, depending \emph{only} on the function $\u$.

We define $\mathcal B_{\boldbeta}^{\frac{3}{2}}(\partial \O)$ and $\Omicron_{\boldbeta}^{\frac{3}{2}}(\partial \E)$
as the set of the traces of functions belonging to $\mathcal B_{\boldbeta}^{2}(\O)$ and $\Omicron_{\boldbeta}^{2}(\O)$, respectively.

From \eqref{derivative from index to multiindices} and \eqref{countably normed spaces}, given $\u \in \Omicron ^2_{\boldbeta} (\O)$, for every $\boldalpha \in \mathbb N_0^2$, $\vert \boldalpha \vert = k \ge 1$, $k \in \mathbb N$, it holds that
\begin{equation} \label{bound derivatives of Babuska functions}
\vert D^{\boldalpha} \u  (\xbold_0)\vert \le \left\vert D^k \u (\xbold_0)\right\vert \le c_\u \frac{d_\u^{|\boldalpha|}}{\Phi_{k}(\xbold_0)} |\boldalpha|!\quad \forall \xbold_0 \in \overline \O,
\end{equation}
since $\beta -1 \in [-1,0)$.

As a consequence, any function in $\Omicron^2_{\boldbeta} (\O)$ admits an analytic continuation on
\begin{equation} \label{analyticity domain}
\mathcal N(\u) := \bigcup _{\xbold_0 \in \overline \O;\, \xbold_0 \neq \Abold_i,\,i=1,\dots,\mathcal V_\O} \left\{  \xbold \in \mathbb R^2 \,\Big|\, \vert \xbold -\xbold_0 \vert < \c \frac{\Phi(\xbold_0)}{d_\u},\;   \forall\, \c \in \left(0,\frac{1}{2}\right) \right\}.
\end{equation}
In order to see this, it suffices to show that the Taylor series
\begin{equation} \label{Taylor series}
\sum_{\boldalpha \in \mathbb N_0^2} \frac{D^{\boldalpha} \u(\xbold _0)}{\boldalpha!} (\xbold - \xbold_0)^{\boldalpha},\quad \xbold_0 \in \overline \O;\, \xbold_0 \ne \Abold_i,\, i=1,\dots, \mathcal V_\O,
\end{equation}
converges uniformly in $\mathcal N (\u)$.

In particular, we prove that it converges uniformly in the ball $B\left( \xbold_0, c\frac{\Phi(\xbold _0)}{d_\u} \right)$ for all $c \in \left(0,\frac{1}{2} \right)$, where $\xbold _0 \in \overline \O$, $\xbold_0 \ne \Abold_i,$ $i=1,\dots, \mathcal V_\O$.
In other words, we have to prove
\[
\sum_{k \in \mathbb N_0} \sum_{\vert\boldalpha \vert =k}  \frac{|D^{\boldalpha} \u(\xbold _0)|}{\boldalpha!} |\xbold - \xbold_0| ^{|\boldalpha|}\le \overline c < \infty,\quad \forall \xbold \in B\left( \xbold_0, c \frac{\Phi(\xbold_0)}{d_\u} \right),
								\quad c \in \left(0,\frac{1}{2}\right),
\]
where $\overline c$ is a positive constant depending only on function $\u$.

Using \eqref{bound derivatives of Babuska functions} and the fact that $\xbold$ belongs to $B\left( \xbold_0, c \frac{\Phi(\xbold_0)}{d_\u} \right)$, we obtain
\begin{equation} \label{computations series}
\begin{split}
& \sum_{k\in \mathbb N_0} \sum_{\vert \boldalpha \vert = k} \frac{\vert D^{\boldalpha} \u (\xbold_0)}{\boldalpha!} \vert \xbold - \xbold _0 \vert^k \le
				\sum_{k\in \mathbb N_0} \sum_{\vert \boldalpha \vert = k} \frac{1}{\boldalpha!} c_\u \frac{d_\u^k}{\Phi_k(\xbold_0)} \vert \boldalpha\vert ! c^k \frac{\Phi_k (\xbold_0)}{d_\u^k(\xbold_0)} \\
& = c_\u \sum_{k\in \mathbb N_0} \sum_{\vert \boldalpha \vert = k}   \frac{\vert \boldalpha \vert !}{\boldalpha !} c^k = c_\u \sum_{k\in \mathbb N_0} \sum_{\ell=0}^k  {{k}\choose {\ell}} c^k
				= c_\u \sum_{k\in \mathbb N_0} \left( 2 c  \right)^k \le \overline c < +\infty,
\end{split}
\end{equation}
since we are assuming that $c \in \left(0,\frac{1}{2} \right)$.


In this paper, we concentrate on the model problem given by the Laplace equation in $\Omega$ endowed with nonhomogeneous Dirichlet boundary conditions.
Given $\g: \partial \Omega \to \mathbb R$, find $u: \Omega \to \mathbb R$ satisfying
\begin{equation} \label{string continuous formulation}
\begin{cases}
\Delta \u = 0 & \text{in } \,\\
\u = \g          & \text{on } \partial \O.\\
\end{cases}
\end{equation}
The weak formulation reads:
\begin{equation} \label{weak continuous problem}
\begin{cases}
\text{find } \u \in \Vg \text{ such that} \\
\a(\u,\vvv) = 0 \quad \forall \vvv \in \Vzero,
\end{cases}
\end{equation}
where
\[
\begin{split}
& \a(\u,\vvv) = (\nabla \u, \nabla \vvv)_{0,\O},\qquad  \V_{\gtilde} := H^1_ {\gtilde}(\O) := \left\{ \vvv \in H^1(\O) \mid \vvv_{|_{\partial \E}} = \gtilde   \right\} \quad \text{for some } \gtilde \in \mathcal B^ {\frac{3}{2}}_{\beta}(\partial \O).\\
\end{split}
\]
It is well known that problem \eqref{weak continuous problem} is well-posed. 

Assuming that the Dirichlet datum $\g \in \mathcal B^{\frac{3}{2}}_{\boldbeta}(\partial \O)$,
then the solution to problem \eqref{weak continuous problem} belongs to~$\mathcal B^2_{\boldbeta}(\O)$ and $\Omicron^2_{\boldbeta} (\O)$ defined in \eqref{countably normed spaces},
see \cite[Theorem 2.2]{babuskaguo_curvilinearhpFEM} and \cite[Theorem 4.44]{SchwabpandhpFEM}.
Owing to \eqref{bound derivatives of Babuska functions} and the subsequent argument, $\u$ is analytic on $\mathcal N(\u)$ defined in \eqref{analyticity domain}.

Before concluding this section, we make the following simplifying assumption:
\begin{equation} \label{assumption one singular vertex}
\begin{cases}
\mathbf 0 \text{ is  a vertex of } \O,\\
\u \text{, solution to \eqref{weak continuous problem}, has only a singularity, precisely at } \mathbf 0.
\end{cases}
\end{equation}
As a consequence of \eqref{assumption one singular vertex}, the solution $\u$ of \eqref{weak continuous problem} is assumed to be analytic far from the singularity at~$\mathbf 0$. The general case of multiple corner singularities can be treated analogously.
The main result of the paper Theorem \ref{theorem exponential convergence}, namely the exponential convergence of the energy error in terms of the number of degrees of accuracy, remains valid also if $\u$ is singular at all the other vertices.

Henceforth, we also assume that the \Babuska and weighted Sobolev spaces introduced above are defined taking into account in their definition only the singularity at $\mathbf 0$.
More precisely, we define such spaces by modifying the weight function in \eqref{weight function vertices} into $\Phi_\beta(\xbold)=\min (1,\vert\xbold\vert)^\beta$, for some $\beta\in [0,1)$,
that is, the weight function associated only with the vertex $\mathbf 0$.

\section {Harmonic virtual element method with nonuniform degrees of accuracy} \label{section harmonic virtual element method with nonuniform degrees of accuracy}
In this section, we introduce a method for the approximation of problem \eqref{weak continuous problem} employing polygonal meshes. This method takes the name of harmonic virtual element method (henceforth harmonic VEM)
and is a modification of the standard virtual element method (henceforth VEM) tailored for the approximation of solutions to harmonic problem.

Let $\{\taun \}$ be a sequence of polygonal decompositions of $\O$. Let $\mathcal V_n$ ($\mathcal V_n^b$) and $\mathcal E_n$ ($\mathcal E_n^b$) be the set of (boundary) vertices and edges of decomposition $\taun$, respectively.
We assume that $\taun$ is a conforming decomposition for all $n \in \mathbb N_0$, that is to say that each boundary edge is an edge of only one element of $\taun$, whereas 
each internal edge is an edge of precisely two elements of $\taun$.
Given $\E \in \taun$, we denote by $\mathcal V^\E$ and $\mathcal E^\E$ the set of vertices and edges of the polygon $\E$.
Moreover, we set $h_\E := \text{diam}(\E)$ the diameter of polygon $\E$, for all $\E \in \taun$, and $h_\e := |\e|$ the length of edge $\e$, for all $\e \in \mathcal E^\E$.
Note that hanging nodes, i.e. multiple edges on a straight line, are allowed.

We require the following two assumptions on the polygonal decomposition $\taun$.
\begin{itemize}
\item [(\textbf{D1})] Every $\E \in \taun$ is star-shaped with respect to a ball of radius greater than or equal to $\rho_0 h_\E$, $\rho_0$ being a universal positive constant belonging to $(0, \frac{1}{2})$.
Since there are many possible balls satisfying the star-shapedeness condition we fix for each $\E \in \taun$ one ball $B=B(\E)$.
Furthermore, for all $\E\in \taun$ abutting $\mathbf 0$, the subtriangulation $\tautilden= \tautilden(\E)$ obtained by joining the vertices of $\E$ to $\mathbf 0$ is made of triangles that are star-shaped with respect to a ball of radius greater than or equal to $\rho_0 h_\T$,
$h_\T$ being diam($\T$) for all $\T \in \tautilden$. For all $\T \in \tautilden(\E)$, it holds $h_\E \approx h_\T$.

\item[(\textbf{D2})] For all edges $\e\in \mathcal E^\E$, $\E \in \taun$, it holds $h_\e \ge \rho_0 h_\E$, $\rho_0$ being the same constant in the assumption (\textbf{D1}).
Besides, the number of edges in \E, is uniformly bounded independently on the geometry of the domain.
\end{itemize}



We define the \emph{local} harmonic virtual element spaces. Given $\p \in \mathbb N$ and given the following space, defined on the boundary of a polygon $\E \in \taun$ as
\begin{equation} \label{boundary space}
\mathbb B(\partial \E) := \left\{ \vnn \in \mathcal C^0(\partial \E) \mid \vnn{}_ {|_\e} \in \mathbb P_{\pe} (\e),\, \forall \e \in \mathcal E^\E  \right\},
\end{equation}
we set
\begin{equation} \label{local harmonic space}
\VE := \left\{  \vnn \in H^1(\E) \mid \Delta \vnn =0,\, \vnn{}_{|_{\partial \E}}\in \mathbb B(\partial \E)  \right\}.
\end{equation}
The functions in $\VE$ are then the solutions to local Laplace problems with piecewise polynomial Dirichlet data; therefore, they are not known explicitly in closed form.

Let us consider the following set of linear functionals on $\VE$. Given $\vvv \in \VE$:
\begin{itemize}
\item the values of $\vvv$ at the vertices of $\E$;
\item the values of $\vvv$ at the $\pe - 1$ internal Gau\ss-Lobatto nodes of $\e$, for all $\e$ edges of $\E$.
\end{itemize}
This is a set of degrees of freedom, since  (i) the dimension of $\VE$ is equal to the number of functionals defined above and (ii) such functionals are uninsolvent,
owing to the fact that weak harmonic functions that vanish on $\partial \E$, vanish also in the interior of $\E$.
Thus, the dimension of space $\VE$ is finite and is equal to $\sum_{\e \in \mathcal E^\E} \pe = \pe \cdot \#(\text{edges of }\E)$.

We note that the definition of the edge degrees of freedom as the values of Gau\ss-Lobatto nodes is not the only possible;
for instance, modal degrees of freedom of integrated Legendre polynomials is suitable as well.

By $\dof_i$ we denote the $i$-th degree of freedom of $\VE$, whereas by $\{\varphi_i^\E\}_{i=1}^{\dim(\VE)}$ we denote the canonical basis of $\VE$, i.e. the set of basis functions in $\VE$ given by
\begin{equation} \label{canonical basis}
\dof_i(\varphi_j) = \delta_{i,j}, \quad i,j=1,\dots, \dim(\VE),
\end{equation}
where $\delta_{i,j}$ is the Kronecker delta.
We define the global harmonic virtual element space
\begin{equation} \label{general harmonic space}
\Vn := \left\{ \vn \in \mathcal C^0(\overline \O) \mid \vn{}_{|_\E} \in \VE,\, \forall \E \in \taun \right\},
\end{equation}
its subspace having vanishing boundary trace
\begin{equation} \label{harmonic test space}
\Vnzero := \left\{ \vn \in \Vn \mid  \vn{}_{|_{\partial \O}} = 0 \right\}
\end{equation}
and its affine subspace containing interpolated essential boundary conditions
\begin{equation} \label{global harmonic space}
\Vng := \left\{ \vn \in \Vn \mid  \vn{}_{|_\e} = \g_{GL}^\e\; \forall \e \in \mathcal E_n^b   \right\}.
\end{equation} 
Here $\g_{GL}^\e$ is the Gau\ss-Lobatto interpolant of degree $\pe$ of $\g$ on the edge $\e$ and where we recall $\mathcal E_n^b$ is the set of boundary edges of $\taun$.
We remark that $\g_{GL}^\e$ is well defined, since $\g \in \mathcal B_{\beta}^{\frac{3}{2}}(\Omega)$, which implies $\g \in \mathcal C^0(\overline \Omega)$, see \cite[Proposition 4.3]{SchwabpandhpFEM}.

The global degrees of freedom in the spaces \eqref{general harmonic space}, \eqref{harmonic test space} and \eqref{global harmonic space} are obtained by a standard continuous matching between the degrees of freedom of local spaces
and, in the latter case, by imposing proper polynomial Dirichlet boundary conditions.

The space $\Vnzero$ \eqref{harmonic test space} and the affine space $\Vng$ \eqref{global harmonic space} consist then of piecewise harmonic functions on each element, piecewise continuous polynomials on the skeleton
and piecewise Gau\ss-Lobatto interpolant of the Dirichlet datum $\g$ on the boundary. 
The name component ``virtual'' emphasizes that such functions are not known explicitly at the interior of each $\E \in \taun$,
since they are weak solutions to local Laplace problems with polynomial Dirichlet boundary conditions.
On the other hand, the name component ``harmonic'' emphasizes that functions in $\Vn$ are piecewise harmonic.

We point out that the choice of Gau\ss-Lobatto interpolation of the Dirichlet datum \eqref{global harmonic space} will play a role in the $\p$ approximation estimates.
However, other choices in order to have a proper $\p$ approximation of the boundary datum could be performed; for instance, one could use integrated Legendre polynomials interpolation of the Dirichlet datum as well.
We stick here to the choice of Gau\ss-Lobatto interpolation for the sake of clarity.

Having defined the approximation spaces, we introduce the harmonic VEM associated with \eqref{weak continuous problem}:
%
\begin{equation} \label{discrete problem}
\begin{cases}
\text{find } \un \in \Vng \text{ such that}\\
\an (\un, \vn) = 0 \quad \forall \vn \in \Vnzero,
\end{cases}
\end{equation}
where $\an(\cdot,\cdot)$ is an approximate symmetric bilinear form defined on the unrestricted space $\Vn \times \Vn$,  see \eqref{general harmonic space}.
We require that the bilinear form $\an(\cdot,\cdot)$ is \emph{explicitly computable by means of} the degrees of freedom of the space and it must mimic the properties of its continuous counterpart $a(\cdot,\cdot)$;
in particular, appropriate continuity and coercivity properties on $\an$ are required. We argue and derive a suitable representation of $\an(\cdot,\cdot)$ step-by-step.

First of all, we recall the representation
\[
a(u,v) = \sum_{\E \in \taun} \aE (u_{|_\E}, v_{|_\E}), \qquad \aE (u_{|_\E}, v_{|_\E}) := \int_\E \nabla u \cdot \nabla v \, dx.
\]
Thus it is natural to seek for $\an(\cdot,\cdot)$ as a sum of its local contributions
\begin{equation} \label{discrete bilinear form splitting}
\an(\un,\vn) = \sum_{\E \in \taun} \anE (\un{}_{|_\E}, \vn{}_{|_\E}) \quad \forall \un,\vn \in \Vn
\end{equation}
Here, the $\anE(\cdot,\cdot)$ are local discrete bilinear forms defined on $\VE \times \VE$.

Next, we impose the validity of the two following assumptions on $\anE(\un,\vn)$:
\begin{itemize}
\item [(\textbf{A1})] \textbf{local harmonic polynomial consistency}: for all $\E \in \taun$, it must hold
\begin{equation} \label{harmonic conistency}
\aE(\q,\vnn) = \anE(\q,\vnn) \qquad \forall \q \in \Har_{\pE} (\E),\; \forall \vnn \in \VE,
\end{equation}
where we recall that $\Har_{\pE}(\E)$ is the space of harmonic polynomials of degree $\pE$ over $\E$;
\item [(\textbf{A2})] \textbf{local stability}: for all $\E \in \taun$, it must hold
\begin{equation} \label{stability}
\alpha_*(\p) \vert \vnn \vert^2_{1,\E} \le \anE (\vnn, \vnn) \le \alpha^*(\p) \vert \vnn \vert^2_{1,\E} \quad \forall \vnn \in \VE,
\end{equation}
where $0 <\alpha_*(\p) \le \alpha^*(\p) < +\infty$ are two constants which may depend on the local space $\VE$. In particular, $\alpha_*$ and $\alpha^*$ must be independent of $\hE$.
\end{itemize}

The assumption (\textbf{A2}) is required to guarantee that the discrete bilinear form scales like its continuous counterpart.
In particular, it implies the coercivity and the continuity of the discrete bilinear form $\an$.
This, along with Lax Milgram lemma, implies the well-posedness of the problem \eqref{discrete problem}.

On the other hand, the assumption (\textbf{A1}) implies that the problem \eqref{discrete problem} passes the patch test, meaning that, if the solution to the continuous problem \eqref{weak continuous problem}
is a piecewise discontinuous harmonic polynomial, then the method described in \eqref{discrete problem} returns exactly, up to machine precision, the exact solution.
For this reason, $\p$ can be regarded as the degree of accuracy of the method.

We now investigate the behaviour of the error in the energy norm.
The following variation of the quasioptimality result for the discrete solution is an adaptation of \cite[Lemma 1]{hpVEMcorner}.
We define
\begin{equation} \label{alpha Strang}
\alpha(\p) := \frac{1+\alpha^*(\p)}{\alpha_*(\p)},
\end{equation}
where $\alpha_*(\p)$ and $\alpha^*(\p)$ are introduced in \eqref{stability}, and the $H^1$-broken Sobolev seminorm associated with the polygonal decomposition $\taun$
\begin{equation} \label{broken Sobolev}
\vert \vvv \vert^2_{1,\taun} := \sum_{\E \in \taun} \vert \vvv \vert^2_{1,\E}\quad \forall \vvv \in L^2(\Omega) \;\; \text{such that }\vvv{}_{|_\E} \in H^1(\E)\, \forall \,\E \in \taun.
\end{equation}
\begin{lem} \label{lemma Strang harmonic}
We assume that the assumptions (\textbf{A1}) and (\textbf{A2}) are satisfied. Let $\u$ and $\un$ be the solutions to problems \eqref{weak continuous problem} and \eqref{discrete problem}, respectively. Then, the following holds true:
\begin{equation} \label{two terms splitting}
\vert \u -\un \vert _{1,\O} \le \alpha(\p) \Big\{ \vert \u - \upi \vert_{1,\taun} + \vert \u - \uI \vert_{1,\Omega} \Big\}\qquad \forall \upi \in S^{\p,\Delta}(\O,\taun),\quad \forall \uI \in \Vng,
\end{equation}
where $\alpha(\p)$ and $\Vng$ are defined in \eqref{alpha Strang} and \eqref{global harmonic space}, respectively,
and where $S^{\p,\Delta}(\O,\taun)$ is the space of (globally discontinuous) piecewise harmonic polynomials of degree $\pE$ on each $\E\in \taun$.
\end{lem}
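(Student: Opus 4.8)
The plan is to run the standard Strang-type argument for nonconforming/VEM discretizations, adapted to the harmonic setting, working \emph{elementwise} because the comparison function $\upi$ is only piecewise harmonic and hence globally discontinuous. I would fix arbitrary $\upi \in S^{\p,\Delta}(\O,\taun)$ and $\uI \in \Vng$ and set $\delta := \un - \uI$. Since both $\un$ and $\uI$ lie in $\Vng$ they carry the same polynomial boundary datum $\g_{GL}^\e$ on every $\e\in\mathcal E_n^b$, so $\delta \in \Vnzero$; being globally continuous with vanishing trace, $\delta$ moreover belongs to $\Vzero$, and $\vert \delta \vert_{1,\O}^2 = \sum_{\E\in\taun}\vert \delta\vert_{1,\E}^2$. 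The conclusion \eqref{two terms splitting} will then follow from the triangle inequality $\vert \u - \un\vert_{1,\O} \le \vert \u - \uI\vert_{1,\O} + \vert\delta\vert_{1,\O}$, once $\vert\delta\vert_{1,\O}$ is controlled.

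To bound $\vert\delta\vert_{1,\O}$ I would start from the coercivity half of (\textbf{A2}), $\alpha_*(\p)\vert\delta\vert_{1,\O}^2 \le \an(\delta,\delta)$, and rewrite the right-hand side. The discrete equation $\an(\un,\delta)=0$ (legitimate since $\delta\in\Vnzero$) gives $\an(\delta,\delta)=-\an(\uI,\delta)=\an(\upi-\uI,\delta)-\an(\upi,\delta)$, where all forms are read through the local splitting \eqref{discrete bilinear form splitting}: although $\upi\notin\Vn$ globally, on each $\E$ one has $\upi{}_{|_\E}\in\Har_{\pE}(\E)\subseteq\VE$, so every local form $\anE(\upi{}_{|_\E},\cdot)$ and $\anE((\upi-\uI){}_{|_\E},\cdot)$ is well defined. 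The harmonic consistency (\textbf{A1}) applied elementwise turns the last term into $\sum_{\E}\aE(\upi{}_{|_\E},\delta{}_{|_\E})=\sum_{\E}\int_\E\nabla\upi\cdot\nabla\delta$, and the continuous equation $\a(\u,\delta)=0$ (valid because $\delta\in\Vzero$) lets me subtract $\sum_{\E}\int_\E\nabla\u\cdot\nabla\delta=0$ to replace $\upi$ by $\upi-\u$, yielding $\sum_{\E}\int_\E\nabla(\u-\upi)\cdot\nabla\delta$. Bounding the two pieces by Cauchy--Schwarz — the first via the continuity $\anE(w,z)\le\alpha^*(\p)\vert w\vert_{1,\E}\vert z\vert_{1,\E}$ furnished by the upper bound in (\textbf{A2}) together with the fact that $\anE$ is symmetric positive semidefinite, the second directly — and summing over elements gives
\[
\alpha_*(\p)\,\vert\delta\vert_{1,\O}\le \alpha^*(\p)\,\vert\upi-\uI\vert_{1,\taun}+\vert\u-\upi\vert_{1,\taun}.
\]

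Finally, the triangle inequality $\vert\upi-\uI\vert_{1,\taun}\le\vert\u-\upi\vert_{1,\taun}+\vert\u-\uI\vert_{1,\O}$ collects everything into the two target quantities. The place where the clean constant $\alpha(\p)=(1+\alpha^*(\p))/\alpha_*(\p)$ emerges is the observation that $\alpha_*(\p)\le 1$: choosing $\vnn=\q\in\Har_{\pE}(\E)$ in (\textbf{A1}) gives $\anE(\q,\q)=\aE(\q,\q)=\vert\q\vert_{1,\E}^2$, and inserting this into (\textbf{A2}) forces $\alpha_*(\p)\le 1\le\alpha^*(\p)$. With $\alpha_*(\p)\le 1$ the coefficient of $\vert\u-\uI\vert_{1,\O}$ collapses from $(\alpha_*(\p)+\alpha^*(\p))/\alpha_*(\p)$ down to at most $\alpha(\p)$, while the coefficient of $\vert\u-\upi\vert_{1,\taun}$ is already exactly $\alpha(\p)$, establishing \eqref{two terms splitting}. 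I expect the only genuinely delicate point to be the elementwise bookkeeping: one must consistently invoke the local splitting \eqref{discrete bilinear form splitting} to make sense of $\an(\upi,\cdot)$ for the globally discontinuous $\upi$, and carefully distinguish which orthogonality is being used ($\a(\u,\delta)=0$, which needs $\delta\in\Vzero$, versus $\an(\un,\delta)=0$, which needs $\delta\in\Vnzero$).
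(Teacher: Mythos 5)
Your proof is correct and follows essentially the same route as the paper's: triangle inequality, coercivity from (\textbf{A2}), insertion of $\upi$ via the discrete and continuous equations together with (\textbf{A1}), elementwise Cauchy--Schwarz, and a final triangle inequality on $\vert\upi-\uI\vert_{1,\taun}$. Your explicit derivation of $\alpha_*(\p)\le 1$ from (\textbf{A1}) is a nice touch that the paper leaves implicit in its ``simple algebra'' step, where that inequality is indeed needed to collapse the coefficient of $\vert\u-\uI\vert_{1,\O}$ to $\alpha(\p)$.
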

\begin{proof}
A triangle inequality yields
\begin{equation} \label{1st step abstract}
\vert u - \un \vert_{1,\Omega} \le \vert u - \uI \vert_{1,\Omega} + \vert \uI - \un \vert_{1,\Omega} \quad \forall \upi \in S^{\p,\Delta}(\O,\taun),\,\forall \uI \in \Vng.
\end{equation}
Owing to the assumptions (\textbf{A1}) and (\textbf{A2}), and the problems \eqref{weak continuous problem} and \eqref{discrete problem}, one gets
\begin{equation} \label{2nd step abstract}
\begin{split}
&\vert \uI - \un \vert ^2_{1,\O} 	 = \sum_{\E \in \taun} \vert \uI - \un \vert^2_{1,\E} \le  \sum_{\E \in \taun} \alpha^{-1}_*(\p) \left\{ \anE(\uI, \uI-\un) - \anE(\un, \uI - \un) \right\}\\
& = \alpha^{-1}_*(\p) \sum_{\E \in \taun} \left\{ \anE(\uI - \upi, \uI - \un) + \anE(\upi, \uI - \un)  \right\}	\\
& = \alpha_*^{-1}(\p) \sum_{\E \in \taun} \left\{ \anE(\uI-\upi, \uI-\un)  + \aE(\upi - \u, \uI-\un)\right\}\\
& \le \alpha_*^{-1}(\p) \sum_{\E \in \taun} \left\{ (1+\alpha^*(\p)) \vert \u -\upi \vert_{1,\E} \vert \uI -\un \vert_{1,\E} + \alpha^*(\p) \vert \u - \uI\vert_{1,\E} \vert \uI - \un \vert_{1,\E}\right\}.\\
\end{split}
\end{equation}
The claim follows plugging \eqref{2nd step abstract} in \eqref{1st step abstract} and from simple algebra.
\end{proof}

Lemma \ref{lemma Strang harmonic} states that the energy error arising from the method can be bounded by a sum of local contributions of best  local error terms
with respect to the space of harmonic polynomials and to the space of functions in the harmonic virtual element space \eqref{local harmonic space}.
We note that such best errors are weighted by the factor $\alpha(\p)$ defined in \eqref{alpha Strang}.

We exhibit now an explicit choice for $\anE(\cdot,\cdot)$. To this end, we need to define a local energy projection from the local harmonic virtual element space $\VE$ defined \eqref{local harmonic space} into $\Har_{\pE}(\E)$,
which we recall is the space of harmonic polynomials of degree $\pE$ over $\E$.
We then introduce the projector $\PinablaE_{\pE}$ defined as
\begin{equation} \label{nabla projector}
\PinablaE_{\pE} : \VE \rightarrow \Har_{\pE}(\E) \quad \text{such that} \quad
\begin{cases}
\aE(\q, \vnn - \PinablaE_{\pE} \vnn) = 0,\\
\int_{\partial \E} (\vnn - \PinablaE_{\pE} \vnn) ds = 0\\
\end{cases}\quad \forall \q \in \Har_{\pE}(\E),\; \forall \vnn \in \VE.
\end{equation}
The second equation in \eqref{nabla projector} only fixes constants and can be substituted by other \emph{computable} choices, see \cite{hitchhikersguideVEM, equivalentprojectorsforVEM}.
Henceforth, when no confusion occurs, we will write $\Pinabla$ in lieu of $\PinablaE_{\pE}$.

We note that the projector $\Pinabla$ can be computed by means of the dofs of space $\VE$. In fact, it suffices to apply an integration by parts to get
\[
\int_\E \nabla \q \cdot \nabla \vnn 
= \int_{\partial \E} (\partial_\n \q) \,\vnn \qquad \forall\, \q \in \Har_{\pE}(\E),\quad \forall \vnn \in \VE,
\]
where $\mathbf n$ denotes the exterior normal versor on the boundary of $\E$, $\partial _\n \q$ denotes the associated normal derivative
 and where we used that $\q$ is harmonic, i.e. $\Delta \q = 0$. In order to conclude, it suffices to note that both $\vnn$ and $\partial _\n \q$ are explicity known on $\partial \E$.

Let now $\SE: \ker (\Pinabla) \times \ker (\Pinabla) \rightarrow \mathbb R$ be any \emph{computable} bilinear form satisfying the following stability assumption:
\begin{equation} \label{local stabilization}
c_*(\p) \vert \vnn \vert ^2_{1,\E} \le \SE(\vnn,\vnn) \le c^*(\p) \vert \vnn \vert^2_{1,\E} \quad \forall \vnn \in \ker (\Pinabla),
\end{equation}
where $0 < c_*(\p) \le c^*(\p) < +\infty$ are two constants which may depend on the local space $\ker (\Pinabla)$.
An explicit selection for $\SE$ and a derivation of explicit bounds on $c_*(\p)$ and $c^*(\p)$ in terms of $\pE$ and $h_\E$ are the topic of Section \ref{subsection stability}.

At this point, we are ready to define the local discrete bilinear form. We set
\begin{equation} \label{local choice discrete bilinear form}
\anE(\unn,\vnn) = \aE(\Pinabla \unn, \Pinabla \vnn)  + \SE((I-\Pinabla) \unn, (I-\Pinabla) \vnn) \qquad \forall \unn,\vnn \in \VE.
\end{equation}
We observe that the local stability property \eqref{local stabilization} implies the validity of the assumptions~(\textbf{A1}) and~(\textbf{A2}). In particular, the assumption~(\textbf{A2}) holds with
\begin{equation} \label{link between alpha and c}
\alpha_*(\p) = \min (1, c_*(\p)),\qquad \alpha^*(\p) = \max (1,c^*(\p)).
\end{equation}
In Sections \ref{subsection stability} and \ref{subsection a p independent stabilization}, we investigate the behaviour of $\alpha(\p)$ in terms of $\pE$
for particular choices of the stabilization $\SE$ satisfying \eqref{local stabilization}.

\begin{remark} \label{remark mixed boundary conditions}
So far, we have assumed that the Laplace problem \eqref{string continuous formulation} is endowed with Dirichlet boundary conditions.
In the case of the Laplace problem $\Delta \u = 0$ with  mixed boundary conditions
\[
\begin{cases}
\u=\g_1 & \text{on }\Gamma_1,\\
\partial_\n\u=\g_2 & \text{on }\Gamma_2,\\
\end{cases}
\]
over two parts of the boundary $\partial \Omega = \overline{\Gamma_1} \cap \overline{\Gamma_2}$ having nonzero measure, the right-hand side of the weak formulation \eqref{discrete problem} is augmented by the term $(\g_2,\vvv)_{0,\Gamma_2}$.
\end{remark}

\subsection{{A stabilization with the $L^2$-norm on the skeleton}}
\label{subsection stability}
In this section we introduce a computable local stabilizing bilinear form $\SE$ satisfying \eqref{local stabilization}
and obtain explicit bounds in terms of the local degree of accuracy $\pE$ for the corresponding stabilization constants $c_*(\p)$ and $c^*(\p)$.
Our first candidate is
\begin{equation} \label{our choice stabilization}
\SE(\unn,\vnn) = \frac{\pE}{h_\E} (\unn, \vnn) _{0,\partial \E} = \frac{\pE}{h_\E} \sum_{\e \in \mathcal E^\E} (\unn,\vnn)_{0,\e} \quad \forall \unn,\,\vnn \in \VE.
\end{equation}
Since functions in $\VE$, defined in \eqref{local harmonic space}, are piecewise polynomials on the boundary of the element,
then it is clear that the local stabilization introduced in \eqref{our choice stabilization} is explicitly computable.

%

For computational purposes, we substitute the edge integrals on the right-hand side of \eqref{our choice stabilization} with Gau\ss-Lobatto quadratures. 
This new choice is spectrally equivalent to the one in \eqref{our choice stabilization}. Indeed, recalling \cite[(2.14)]{bernardimaday1992polynomialinterpolationinsobolev}
and setting $\widehat I = [-1,1]$, $\{\widehat \eta _j^\p\}_{i=0}^\p$ and $\{\widehat \xi _j^\p\}_{i=0}^\p$ the Gau\ss-Lobatto weights and nodes on $\widehat I$, then there exists a positive universal constant $\c$ such that
\begin{equation}\label{masslump}
\c \sum_{j=0}^\p \widehat \q ^2 (\widehat \xi _j^\p) \widehat \eta ^\p_j \le \Vert \q \Vert^2_{0,\widehat I} \le  \sum_{j=0}^\p \widehat \q ^2 (\widehat \xi _j^\p) \widehat \eta ^\p_j,\quad \forall \widehat \q \in \mathbb P_{\p} (\widehat I).
\end{equation}
A scaling argument in addition to the assumption (\textbf{D2}) guarantees that the terms of the sum on the right-hand side of \eqref{our choice stabilization} can be replaced with Gau\ss-Lobatto quadrature formulas.
This last choice is, from the computational point of view, more convenient than 
\eqref{our choice stabilization}, since it results in diagonal matrix blocks.
Thus, we emphasize our choice of $\SE$ by writing explicitly its definition. To each $\e \in \mathcal E^\E$ we associate the set of Gau\ss-Lobatto weights and nodes $\{ \eta_j^{\pe,\e} \}_{j=0}^{\pe}$ and $\{ \xi_j^{\pe,\e} \}_{j=0}^{\pe}$, respectively.
The local stabilizing bilinear form associated with method \eqref{discrete problem} reads
\begin{equation} \label{elegant choice stabilization}
\SE(\unn,\vnn) = \frac{\pE}{h_\E} \sum_{\e \in \mathcal E^\E} \left( \sum_{j=0}^{\pe} \eta_{j}^{\pe,\e} \unn (\xi_j^{\pe, \e}) \vnn (\xi_j^{\pe, \e})    \right).
\end{equation}
Next, we discuss the issue of showing explicit stability bounds \eqref{local stabilization} in terms of the local degree of accuracy.

Let us denote by 
\begin{equation} \label{average}
\overline \vvv := \frac{1}{|\E|} \int_\E \vvv
\end{equation}
the mean value of $v$ over $\E \in \taun$. Then the Poincar\'e inequality, see e.g.  \cite{BrennerScott}, implies
\begin{equation}\label{Poincare}
\Vert \vvv - \overline v \Vert_{0,\E} \lesssim h_\E \vert v \vert_{1,\E}\qquad \forall v \in H^1(\E).
\end{equation}
Moreover, when $\vvv \in \ker(\Pinabla)$, the following improved estimate is valid.
\begin{lem} \label{lemma Aubin Nitsche}
Let $\E\in \taun$ and let $\Pinabla$ be defined in \eqref{nabla projector}.
For any $\vvv \in \ker(\Pinabla)$, the following holds true:
\begin{equation}\label{AuNi}
\Vert \vvv - \overline v \Vert_{0,\E} \lesssim 
\begin{cases}
\hE \left( \frac{\log(\p)}{\p} \right)^{\frac{\lambda_\E}{\pi}} \vert \vvv \vert_{1,\E} & \text{if } \E \text{ is convex},\\
\hE \left( \frac{\log(\p)}{\p} \right)^{\frac{\lambda_\E}{\omega_\E}-\varepsilon} \vert \vvv \vert_{1,\E} & \forall \varepsilon > 0 \text{ arbitrarily small, }\text{otherwise},\\
\end{cases}
\end{equation}
where $\lambda_\E$ and $\omega_\E$ denote the smallest exterior and the largest interior angle of $\E$, respectively.
\end{lem}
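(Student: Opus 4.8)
The statement calls for an Aubin--Nitsche duality argument that upgrades the Poincaré bound \eqref{Poincare} by exploiting two facts about $\vvv \in \ker(\Pinabla)$: it is harmonic in $\E$, and, by the first relation in \eqref{nabla projector}, it is energy-orthogonal to $\Har_{\pE}(\E)$. Since $\vvv-\overline v$ has zero mean over $\E$, the auxiliary Neumann problem
\[
-\Delta\psi = \vvv - \overline v \ \text{ in } \E, \qquad \partial_\n \psi = 0 \ \text{ on } \partial\E, \qquad \int_\E \psi\,dx = 0,
\]
is well posed. Integrating by parts twice, first with the condition $\partial_\n\psi=0$ and then with $\Delta\vvv=0$, I would obtain the duality identity
\[
\Vert \vvv - \overline v \Vert_{0,\E}^2 = \aE(\vvv,\psi) = \int_{\partial\E} (\partial_\n \vvv)\,\psi\,ds .
\]

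Because $\aE(\vvv,\q)=0$ for every $\q\in\Har_{\pE}(\E)$, i.e. $\int_{\partial\E}(\partial_\n\vvv)\q\,ds=0$, a harmonic polynomial may be inserted for free:
\[
\Vert \vvv-\overline v\Vert_{0,\E}^2 = \int_{\partial\E}(\partial_\n\vvv)(\psi-\q)\,ds \le \Vert \partial_\n\vvv\Vert_{-\frac12,\partial\E}\ \Vert \psi-\q\Vert_{\frac12,\partial\E} \qquad \forall\,\q\in\Har_{\pE}(\E).
\]
The normal trace of a harmonic function is controlled by its Dirichlet energy, so $\Vert\partial_\n\vvv\Vert_{-\frac12,\partial\E}\lesssim\vert\vvv\vert_{1,\E}$ with the correct $\hE$-scaling, and everything reduces to estimating $\inf_{\q\in\Har_{\pE}(\E)}\Vert\psi-\q\Vert_{\frac12,\partial\E}$, the best approximation of the Neumann trace by traces of harmonic polynomials.

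The crucial observation --- and what I expect to be the main obstacle --- is that although $\psi$ is not harmonic inside $\E$, only its boundary trace is involved. Using that the harmonic extension is an isomorphism of $H^{\frac12}(\partial\E)$ onto the harmonic functions of $H^1(\E)$ and fixes harmonic polynomials, and fixing the constant through $\int_{\partial\E}\vvv\,ds=0$, I would replace this boundary quantity by $\inf_{\q}\vert\widetilde\psi-\q\vert_{1,\E}$, where $\widetilde\psi$ is the \emph{harmonic} extension of $\psi_{|\partial\E}$. This turns the problem into best approximation of a harmonic function by harmonic polynomials, for which the estimates of \cite{babumelenk_harmonicpolynomials_approx, melenk_phdthesis} are available. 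Feeding in the corner regularity of the dual solution --- near a vertex of interior opening $\omega_\E$ the Neumann solution behaves like $r^{\pi/\omega_\E}$, a behaviour inherited by $\widetilde\psi$ --- produces the algebraic rate in $\p$.

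The dichotomy in \eqref{AuNi} is precisely the delicate endpoint of this last step. The exterior angle $\lambda_\E$ enters through the exterior-cone/conformal geometry underlying the harmonic approximation estimates, whereas the interior angle $\omega_\E$ enters through the singular exponent $\pi/\omega_\E$ of $\widetilde\psi$; in the convex case these decouple into the clean exponent $\lambda_\E/\pi$, while in the re-entrant case the attainable rate saturates at $\lambda_\E/\omega_\E$ and is reached only up to an arbitrarily small loss $\varepsilon$, the factor $\log(\p)$ being the customary by-product of bounding the singular contribution. Finally, tracking the powers of $\hE$ through the scaling of the Neumann problem, $\vert\psi\vert_{2,\E}\lesssim\Vert\vvv-\overline v\Vert_{0,\E}$, and of the trace inequalities, and dividing by $\Vert\vvv-\overline v\Vert_{0,\E}$, delivers \eqref{AuNi}.
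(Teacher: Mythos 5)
Your argument is essentially the paper's own proof: the same dual Neumann problem, the same exploitation of the orthogonality of $\vvv$ to $\Har_{\pE}(\E)$, and the same reduction to approximating the harmonic part of the dual solution by harmonic polynomials via \cite[Theorem 2]{babumelenk_harmonicpolynomials_approx} together with $H^2$ regularity on convex $\E$. The only (cosmetic) difference is that you run the duality on the boundary through the $H^{-1/2}\times H^{1/2}$ pairing, the Neumann trace inequality and the harmonic extension $\widetilde\psi$, whereas the paper stays in the volume, splitting $\eta=\eta_1+\eta_2$ with $\eta_1\in H^1_0(\E)$ (so that $(\nabla\eta_1,\nabla\vvv)_{0,\E}=0$) and applying Cauchy--Schwarz to $(\nabla(\eta_2-\Pinabla\eta_2),\nabla\vvv)_{0,\E}$; your $\widetilde\psi$ is exactly the paper's $\eta_2$.
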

\begin{proof}
We prove the assertion only for $\E$ convex, i.e. $0<\omega_\E < \pi$, since the nonconvex case can be treated analogously.
Moreover, we assume without loss of generality that $\hE=1$. The general form of the assertion \eqref{AuNi} follows then by a scaling argument.

The proof is based on an Aubin-Nitsche-type argument. For a fixed $\vvv \in \ker(\Pinabla)$, we consider an auxiliary problem of finding $\eta$ such that
\begin{equation} \label{dual problem Neumann}
\begin{cases}
-\Delta \eta = \vvv - \overline \vvv& \text{in } \E ,\\
\partial_\n \eta = 0 & \text{on } \partial \E ,\\
\int_\E \eta = 0,\\
\end{cases}
\end{equation}
where we recall that $\overline \vvv$ is defined in \eqref{average}.

We observe that by construction the right-hand side in \eqref{dual problem Neumann} has vanishing mean and thus by the Lax-Milgram lemma the solution $\eta \in H^1(K)$ is well defined.
The additional regularity of $\eta$ depends on the size of interior angles of $\E$. In particular, if $\E$ is convex, there holds $\eta \in H^2(\E)$. More precisely,
\begin{equation} \label{a priori estimate for Neumann problem}
\Vert \eta \Vert_{2,\E} \lesssim \Vert v -\overline v \Vert_{0,\E},
\end{equation}
see e.g. \cite[Section 4.2]{SchwabpandhpFEM}.

In the following, we utilize the additive splitting $\eta=\eta_1+\eta_2$, where the summands satisfy
\[
\begin{cases}
-\Delta \eta_1 = \vvv - \overline \vvv & \text{in }\E,\\
\eta_1 = 0 & \text{on } \partial \E,
\end{cases}\quad \quad
\begin{cases}
-\Delta \eta_2 = 0 & \text{in }\E,\\
\eta_1 = \eta & \text{on } \partial \E.
\end{cases}
\]
Again, standard a priori regularity results entail for a convex $\E$
\begin{equation} \label{cattiveria}
\Vert \eta_1 \Vert_{2,\E} \lesssim \Vert \vvv - \overline \vvv \Vert_{0,\E}.
\end{equation}
Therefore, a combination of \eqref{a priori estimate for Neumann problem} and \eqref{cattiveria} with a triangle inequality, yields
\begin{equation} \label{bababa}
\Vert \eta_2 \Vert_{2,\E} \le \Vert \eta \Vert_{2,\E} + \Vert \eta_1 \Vert_{2,\E} \lesssim \Vert \vvv - \overline \vvv \Vert_{0,\E}.
\end{equation}
Besides, given any $w \in H^1(\E)$ which is also harmonic, one has
\begin{equation} \label{schmerz}
(\nabla \eta_1, \nabla w)_{0,\E} = (\eta_1, \partial_{\n} w)_{0,\partial \E} - (\eta_1,  \Delta w)_{0,\E} =0.
\end{equation}
Recalling that $\vvv \in \ker (\Pinabla)$ and applying sequentially \eqref{dual problem Neumann}, an integration by parts, \eqref{schmerz}, orthogonality of $\Pinabla$, the Cauchy-Schwarz inequality and \cite[Theorem 2]{babumelenk_harmonicpolynomials_approx}, we deduce
\begin{equation} \label{delirium}
\begin{split}
& \Vert \vvv - \overline \vvv \Vert^2_{0,\E} = (-\Delta \eta, \vvv)_{0,\E} = (\nabla \eta, \nabla(\vvv - \overline \vvv))_{0,\E} = (\nabla \eta_2, \nabla(\vvv - \overline \vvv))_{0,\E}   = (\nabla \eta_2, \nabla(\vvv - \Pinabla \vvv))_{0,\E} \\
& = (\nabla (\eta_2 - \Pinabla \eta_2), \nabla \vvv )_{0,\E} \le \vert \eta_2 - \Pinabla \eta_2 \vert_{1,\E} \vert \vvv \vert_{1,\E} \lesssim \left( \frac{\log(\p)}{\p} \right) ^{\frac{\lambda_\E}{\pi}} \Vert  \eta_2 \Vert_{2,\E} \vert \vvv \vert_{1,\E},
\end{split}
\end{equation}
where $\lambda_\E$ denotes the smallest exterior angle of $\E$.

Plugging \eqref{delirium} in \eqref{bababa}, we get the assertion.
\end{proof}

Now, we are ready to prove stability estimates for the spectrally equivalent $L^2$-norm stabilizations introduced in \eqref{our choice stabilization} and \eqref{elegant choice stabilization}.

\begin{lem} \label{lemma stability bounds}
The bilinear forms $\SE$ defined in \eqref{our choice stabilization} and \eqref{elegant choice stabilization} fulfill the two-sided estimate \eqref{local stabilization} with constants satisfying
\begin{equation} \label{explicit bounds stability}
c_*(\p) \gtrsim \pE^{-1},\quad \quad \quad \c^*(\p) \lesssim
\begin{cases}
\p \left( \frac{\log(\p)}{\p}  \right)^{\frac{\lambda_\E}{\pi}} & \text{if } \E \text{ is convex},\\
\p \left( \frac{\log(\p)}{\p}  \right)^{\frac{\lambda_\E}{\omega_\E} - \varepsilon} & \forall \varepsilon >0 \text{ arbitrarily small, }\text{otherwise,}\\
\end{cases}
\end{equation}
where $\lambda_\E$ and $\omega_\E$ denote the smallest exterior and the largest interior angles of $\E$, respectively.
\end{lem}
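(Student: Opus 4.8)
The plan is to reduce to the reference scale and then establish the two inequalities in \eqref{local stabilization} separately, the lower bound being essentially an inverse estimate for harmonic functions with polynomial traces and the upper bound being the point where the improved Poincar\'e inequality of Lemma \ref{lemma Aubin Nitsche} enters. First I would invoke a scaling argument: in two dimensions $\vert\vnn\vert^2_{1,\E}$ is scale invariant while $\Vert\vnn\Vert^2_{0,\partial\E}$ scales like $h_\E$, so the weight $\pE/h_\E$ renders $\SE(\vnn,\vnn)$ scale invariant and it suffices to treat $h_\E=1$. I would also record the structural fact that, by the second defining condition in \eqref{nabla projector}, every $\vnn\in\ker(\Pinabla)$ satisfies $\int_{\partial\E}\vnn\,ds=0$; this zero boundary-average property is precisely what makes the upper bound sharp.

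For the lower bound $c_*(\p)\gtrsim\pE^{-1}$, I would use that a harmonic function realises the \emph{minimal} Dirichlet energy among all $H^1(\E)$-extensions of its own boundary trace $g:=\vnn|_{\partial\E}$, which is a continuous piecewise polynomial of degree $\pE$ on $\partial\E$. Hence $\vert\vnn\vert_{1,\E}\le\vert w\vert_{1,\E}$ for any competitor $w$ with $w|_{\partial\E}=g$. I would build such a $w$ on the subtriangulation $\tautilden(\E)$ of assumption (\textbf{D1}): on each triangle having an edge $\e$ on $\partial\E$, lift $g|_{\e}$ by the standard polynomial extension operator on a reference triangle, gluing the pieces continuously along the radial edges. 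The energy of a polynomial lifting is controlled by the $H^{1/2}_{00}$-norm of its trace, and a polynomial inverse inequality bounds this by $\pE\Vert g|_{\e}\Vert_{0,\e}$; squaring and summing over the uniformly bounded number of edges (assumption (\textbf{D2})) gives $\vert\vnn\vert^2_{1,\E}\lesssim\pE^2\Vert\vnn\Vert^2_{0,\partial\E}$, that is $\SE(\vnn,\vnn)\gtrsim\pE^{-1}\vert\vnn\vert^2_{1,\E}$. This bound needs neither convexity nor the projector, so it holds on all of $\VE$.

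For the upper bound I would start from $\Vert\vnn\Vert^2_{0,\partial\E}\le\Vert\vnn-\overline\vnn\Vert^2_{0,\partial\E}$, which follows at once from $\int_{\partial\E}\vnn\,ds=0$ by expanding the square. The decisive step is to estimate the right-hand side with the \emph{multiplicative} trace inequality
\[
\Vert\vnn-\overline\vnn\Vert^2_{0,\partial\E}\lesssim h_\E^{-1}\Vert\vnn-\overline\vnn\Vert^2_{0,\E}+\Vert\vnn-\overline\vnn\Vert_{0,\E}\,\vert\vnn\vert_{1,\E},
\]
valid on the star-shaped element $\E$. Inserting Lemma \ref{lemma Aubin Nitsche}, which gives $\Vert\vnn-\overline\vnn\Vert_{0,\E}\lesssim h_\E(\log(\p)/\p)^{\lambda_\E/\pi}\vert\vnn\vert_{1,\E}$ for convex $\E$, the first summand contributes the \emph{square} of the small factor and the second summand its first power, so the latter dominates and yields $\Vert\vnn\Vert^2_{0,\partial\E}\lesssim h_\E(\log(\p)/\p)^{\lambda_\E/\pi}\vert\vnn\vert^2_{1,\E}$. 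Multiplying by $\pE/h_\E$ gives exactly $c^*(\p)\lesssim\pE(\log(\p)/\p)^{\lambda_\E/\pi}$; in the nonconvex case one uses the second branch of \eqref{AuNi}, obtaining the exponent $\lambda_\E/\omega_\E-\varepsilon$.

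The main obstacle, and the point on which the sharpness of the statement hinges, is the use of the multiplicative rather than the additive trace inequality in the upper bound: the naive additive estimate $\Vert\vnn\Vert^2_{0,\partial\E}\lesssim h_\E^{-1}\Vert\vnn-\overline\vnn\Vert^2_{0,\E}+h_\E\vert\vnn\vert^2_{1,\E}$ would only yield $c^*(\p)\lesssim\pE$, losing the gain $(\log(\p)/\p)^{\lambda_\E/\pi}$ that distinguishes this stabilization from the generic $\h\p$-VEM one. A secondary technical difficulty lies in the lower bound, namely making the edgewise polynomial liftings continuous across the radial edges of $\tautilden(\E)$ and tracking the endpoint (vertex) values; I expect this to be handled by the familiar splitting of $g$ into an affine vertex-interpolant plus a part vanishing at the vertices, so that the $H^{1/2}_{00}$-lifting applies to the latter.
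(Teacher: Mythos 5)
Your upper bound is correct and follows essentially the paper's own argument: reduce to $h_\E=1$, use the zero boundary average of $\vnn\in\ker(\Pinabla)$ to replace $\Vert\vnn\Vert_{0,\partial\E}$ by $\Vert\vnn-\overline\vnn\Vert_{0,\partial\E}$ (the paper does this with a triangle inequality plus Cauchy--Schwarz; your expand-the-square version is a slight streamlining), then apply the multiplicative trace inequality and Lemma \ref{lemma Aubin Nitsche}, observing that the cross term dominates. Your emphasis on the multiplicative versus additive trace inequality as the crux of the sharpness is exactly right.

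The lower bound is where you diverge from the paper and where there is a genuine quantitative gap. The paper writes $\vert\vnn\vert^2_{1,\E}=\int_{\partial\E}(\partial_\n\vnn)\,\vnn$, applies the Neumann trace inequality $\Vert\partial_\n\vnn\Vert_{-1/2,\partial\E}\lesssim\vert\vnn\vert_{1,\E}$, and concludes $\vert\vnn\vert_{1,\E}\lesssim\Vert\vnn\Vert_{1/2,\partial\E}\lesssim\pE\Vert\vnn\Vert_{0,\partial\E}$ by the one-dimensional $hp$ inverse estimate and interpolation, the $H^{1/2}$ norm being the \emph{intrinsic} one on the closed curve $\partial\E$. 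Your Dirichlet-principle idea is equivalent in spirit (both amount to bounding the harmonic extension's energy by the $H^{1/2}(\partial\E)$ norm of the trace), but the concrete lifting you propose does not deliver the claimed rate. The edge restriction $g|_\e$ does not vanish at the vertices, so you must first subtract the piecewise-affine vertex interpolant $I_1g$ before the $H^{1/2}_{00}(\e)$ machinery applies; but then $\Vert I_1g\Vert_{0,\e}\lesssim\Vert g\Vert_{L^\infty(\e)}\lesssim\pE\Vert g\Vert_{0,\e}$ is sharp (take $g$ concentrating at an endpoint), so $\Vert g-I_1g\Vert_{0,\e}$ can be as large as $\pE\Vert g\Vert_{0,\e}$, and the subsequent inverse estimate $\Vert\cdot\Vert_{H^{1/2}_{00}(\e)}\lesssim\pE\Vert\cdot\Vert_{0,\e}$ costs another factor $\pE$. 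Squaring, your construction only yields $\vert\vnn\vert^2_{1,\E}\lesssim\pE^4\Vert\vnn\Vert^2_{0,\partial\E}$, i.e. $c_*(\p)\gtrsim\pE^{-3}$ rather than $\pE^{-1}$ (using instead the logarithmically stable vertex/edge splittings from $p$-version domain decomposition still leaves a $(1+\log\pE)^2$ loss). The fix is to avoid the edgewise $H^{1/2}_{00}$ decomposition altogether: bound the minimal extension energy by the intrinsic $\Vert g\Vert_{H^{1/2}(\partial\E)}$ of the full (continuous, piecewise polynomial, hence $H^1(\partial\E)$) trace and apply the inverse estimate $\Vert g\Vert_{1/2,\partial\E}\lesssim\pE\Vert g\Vert_{0,\partial\E}$ once, which is precisely what the paper's duality argument accomplishes.
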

\begin{proof}
We prove the assertion only for $\E$ convex, i.e. $0<\omega_\E < \pi$, since the nonconvex case can be treated analogously.
Moreover, in view of \eqref{masslump}, it suffices to consider the bilinear form $\SE$ from \eqref{our choice stabilization}. We also assume $h_\E$=1 since the  assertion will follow by a scaling argument.

We start by proving the lower bound for $c_*(\p)$. Given $\vnn \in \ker (\Pinabla)$, we write
\begin{equation} \label{energy weak harmonic}
\vert \vnn \vert^2_{1,\E} = \int _\E \nabla \vnn \cdot \nabla \vnn = \int_{\partial \E} (\partial_\n \vnn) \, \vnn,
\end{equation}
where we used an integration by parts and the fact that $\vnn$ is harmonic in $\E$.
We apply now a Neumann trace inequality \cite[Theorem A33]{SchwabpandhpFEM} with $\Delta \vnn=0$ in $\E$, in order to show that
\begin{equation} \label{bounding the boundary}
\int_{\partial \E} (\partial _\n \vnn) \, \vnn \le \left\Vert \partial_\n \vnn \right\Vert_{-\frac{1}{2},\partial \E} \Vert \vnn \Vert_{\frac{1}{2}, \partial \E} \lesssim \vert \vnn \vert_{1,\E} \Vert \vnn \Vert_{\frac{1}{2},\partial \E}.
\end{equation}
Plugging \eqref{bounding the boundary} in \eqref{energy weak harmonic} and using the polynomial $\h\p$ inverse inequality on an interval \cite[Theorem 3.91]{SchwabpandhpFEM} and interpolation theory \cite{Triebel}, we obtain
\[
\vert \vnn \vert_{1,\E}^2 \lesssim \Vert \vnn \Vert_{\frac{1}{2},\partial \E}^2 \lesssim \pE^2 \Vert \vnn \Vert_{0, \partial \E}^2 = \pE \cdot \SE(\vnn,\vnn),
\]
which is the asserted bound on $c_*(\p)$.

Next, we investigate the behaviour of $c^*(\p)$. 
Given $\vvv \in \ker (\Pinabla)$ and $\overline \vvv$  defined as in \eqref{average}, one has
\begin{equation} \label{eq1}
\SE(\vnn, \vnn) = \pE \Vert \vnn \Vert_{0,\partial \E}^2 \lesssim
p (\Vert v - \overline v \Vert_{0,\partial \E}^2 + \vert \partial \E\vert \cdot \vert\overline v\vert^2).
\end{equation}
We observe that, by \eqref{nabla projector}, $v$ has zero boundary mean and therefore, by the Cauchy-Schwarz inequality,
\begin{equation} \label{eq2}
|\partial \E| \cdot |\overline v|^2 = \frac{1}{|\partial \E|} \cdot \left|\int_{\partial \E}(v-\overline v) \right|^2 \leq \|v - \overline v\|_{0,\partial \E}^2.
\end{equation}
Hence by \eqref{eq1}, \eqref{eq2}, the multiplicative trace inequality and \eqref{AuNi}, we deduce
\begin{equation}
\SE(\vnn, \vnn) \lesssim  \pE \vert \vvv - \overline \vvv \vert_{0,\partial \E}^2 \lesssim \p (\Vert \vvv - \overline \vvv \Vert_{0,\E} \vert \vvv \vert_{1,\E} + \Vert \vvv - \overline \vvv \Vert^2_{0,\E})
\lesssim \p \left( \frac{\log(\p)}{\p}   \right)^{\frac{\lambda_\E}{\pi}} \vert \vvv \vert^2_{1,\E},
\end{equation}
where $\lambda_\E$ denotes the smallest exterior angle of $\E$.
\end{proof}
Lemma \ref{lemma stability bounds} and \eqref{link between alpha and c} imply that $\alpha(\p)$ introduced in \eqref{alpha Strang} admits the upper bound
\begin{equation} \label{explicit bound alpha}
\alpha(\p) := \frac{1+\alpha^*(\p)}{\alpha_*(\p)} \lesssim
\begin{cases}
\p^2\left( \frac{\log(\p)}{\p}  \right)^{\frac{\lambda_\E}{\pi}} & \text{if all } \E\in \taun \text{ are convex},\\
\p^2\left( \frac{\log(\p)}{\p}  \right)^{\min_{\E \in \taun} \frac{\lambda_\E}{\omega_\E} - \varepsilon} & \forall \varepsilon >0 \text{ arbitrarily small, } \text{otherwise},\\
\end{cases}
\end{equation}
where $\lambda_\E$ and $\omega_\E$ denote the smallest exterior and largest interior angles of $\E$, for all $\E \in \taun$, respectively.

We emphasize that the corresponding stability constant obtained for the standard (i.e. nonharmonic) $\h\p$ virtual element method, see \cite[Theorem 2]{hpVEMcorner}, grows much faster in $\p$ than $\alpha(\p)$.
More precisely, it was proven that
\[
\alpha(\p) \lesssim
\begin{cases}
\p^5 & \text{if all } \E\in \taun \text{ are convex},\\
\p^{2\max_{\E\in \taun} \left( 1 - \frac{\pi}{\omega_\E} - \varepsilon     \right) + 5} & \forall \varepsilon >0 \text{ arbitrarily small, }\text{otherwise},
\end{cases}
\]
where, for all $\E \in \taun$, $\omega_\E$ denotes the largest interior angle of $\E$.

We conclude this section by noting that the stabilization introduced in \eqref{our choice stabilization} is basically, up to a $\p$ scaling, the weighted (with Gau\ss-Lobatto weights) boundary contribution
of the standard VEM stabilization introduced in \cite{VEMvolley, hitchhikersguideVEM}.

\subsection{An optimal stabilization with the $H^{1/2}$-norm on the skeleton}
\label{subsection a p independent stabilization}
In view of Theorem \ref{theorem exponential convergence}, which guarantees exponential convergence of the method in terms of the number of degrees of freedom, the mild behaviour of the stability constants $c_*(\p)$ and $c^*(\p)$
described in Lemma \ref{lemma stability bounds} in terms of $\p$ has no effect on the asymptotic convergence rate of the method this remains exponential.

However, it is worth mentioning that there exists an optimal stabilization bilinear form $\SE$ with uniformly bounded stability constants $c_*$ and $c^*$; such stabilization reads
\begin{equation} \label{p independent stab}
\SE(\unn,\vnn) = (\unn, \vnn)_{\frac{1}{2},\partial \E}\quad \forall \unn,\, \vnn \in \ker(\Pinabla),
\end{equation}
where $(\cdot, \cdot)_{\frac{1}{2},\partial \E}$ in the inner product on the Hilbert space $H^{\frac{1}{2}}(\partial \E)$.
\begin{lem} \label{lemma p independent stabilization}
Let $\SE$ be defined as in \eqref{p independent stab}. Then, for all $\vnn \in \ker (\Pinabla)$, $\Pinabla$ being defined in \eqref{nabla projector}, the following holds true:
\[
\SE(\vnn, \vnn) \approx \vert \vnn \vert^2_{1,\E}.
\]
\end{lem}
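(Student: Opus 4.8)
The plan is to recognize that, for $\vnn \in \ker(\Pinabla)$, the quantity $\SE(\vnn,\vnn)$ is exactly the full $H^{1/2}(\partial\E)$-norm of the trace of $\vnn$, and that $|\vnn|_{1,\E}$ is the Dirichlet energy of the harmonic extension of that same trace; the two are equivalent by classical trace theory, with constants controlled only by the shape-regularity parameters in (\textbf{D1}) and (\textbf{D2}). As in the proof of Lemma \ref{lemma stability bounds}, I would first rescale so that $\hE=1$ and recover the general statement by scaling; under this normalization $\SE(\vnn,\vnn)=\Vert\vnn\Vert^2_{\frac{1}{2},\partial\E}$. The crucial preliminary observation is that membership in $\ker(\Pinabla)$ forces a vanishing boundary mean: the second equation in \eqref{nabla projector} gives $\int_{\partial\E}\vnn\,ds=0$. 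A Poincar\'e--Friedrichs inequality subject to this zero boundary-mean constraint then yields $\Vert\vnn\Vert_{0,\E}\lesssim|\vnn|_{1,\E}$, so that on $\ker(\Pinabla)$ the full $H^1$-norm and its seminorm are equivalent, $\Vert\vnn\Vert_{1,\E}\approx|\vnn|_{1,\E}$.

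For the upper bound $\SE(\vnn,\vnn)\lesssim|\vnn|^2_{1,\E}$, I would invoke continuity of the trace operator $H^1(\E)\to H^{\frac{1}{2}}(\partial\E)$, giving $\Vert\vnn\Vert_{\frac{1}{2},\partial\E}\lesssim\Vert\vnn\Vert_{1,\E}$, and then replace the full $H^1$-norm by the seminorm through the equivalence just established. For the reverse bound $|\vnn|^2_{1,\E}\lesssim\SE(\vnn,\vnn)$, I would exploit that a harmonic function is the \emph{minimal} Dirichlet-energy extension of its own boundary trace. Letting $\mathcal R$ denote any bounded right inverse of the trace operator, the function $\vnn$ minimizes $|\cdot|_{1,\E}$ among all $H^1(\E)$-functions sharing the trace $\vnn{}_{|_{\partial\E}}$, whence $|\vnn|_{1,\E}\le|\mathcal R(\vnn{}_{|_{\partial\E}})|_{1,\E}\le\Vert\mathcal R(\vnn{}_{|_{\partial\E}})\Vert_{1,\E}\lesssim\Vert\vnn\Vert_{\frac{1}{2},\partial\E}$. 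Combining the two bounds and undoing the rescaling delivers $\SE(\vnn,\vnn)\approx|\vnn|^2_{1,\E}$.

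The main obstacle is to guarantee that all the hidden constants---in the Poincar\'e--Friedrichs inequality, in the trace inequality, and in the bounded right inverse of the trace---are genuinely independent of the individual element $\E$, and in particular independent of $\pE$. This is where (\textbf{D1}) and (\textbf{D2}) enter: star-shapedness with respect to a ball of radius $\rho_0\hE$ and the uniform bound on the number of edges let one realize these estimates on a fixed reference family and transport them by an isotropic affine scaling, so that the equivalence is uniform across the mesh. In contrast to Lemma \ref{lemma stability bounds}, no $\pE$-dependence survives, because the argument never passes through a polynomial inverse inequality on the edges; this is precisely what makes the choice \eqref{p independent stab} an optimal, $\p$-robust stabilization.
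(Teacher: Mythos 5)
Your argument is correct and matches the paper's approach: the paper's proof is a one-line pointer to the proof of Lemma \ref{lemma stability bounds} plus a scaling argument, and your write-up is precisely that argument specialized to the $H^{1/2}$ inner product --- the trace theorem combined with the zero boundary mean enforced on $\ker(\Pinabla)$ by \eqref{nabla projector} for the continuity bound, and the reverse inequality $\vert\vnn\vert^2_{1,\E}\lesssim\Vert\vnn\Vert^2_{\frac{1}{2},\partial\E}$ for coercivity. The only immaterial variation is that you derive the latter from the energy-minimality of the harmonic extension together with a bounded right inverse of the trace, whereas the paper's Lemma \ref{lemma stability bounds} derives it via integration by parts and the Neumann trace inequality $\Vert\partial_\n\vnn\Vert_{-\frac{1}{2},\partial\E}\lesssim\vert\vnn\vert_{1,\E}$; both routes correctly yield constants independent of $\pE$ and, after rescaling, of $\hE$.
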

\begin{proof}
The statement follows from the proof of Lemma \ref{lemma stability bounds} and a scaling argument.
\end{proof}

It can be expected that the evaluation of \eqref{p independent stab} is more involved than the evaluation of the other variants of stabilization presented in Section \ref{subsection stability}, namely those in
\eqref{our choice stabilization} and \eqref{elegant choice stabilization}. In the following, we briefly discuss evaluation of the local stabilization \eqref{p independent stab}.

We firstly recall the definition of the Aronszajn-Slobodeckij $H^{\frac{1}{2}}$ inner product over $\partial \E$
\begin{equation} \label{H12 inner product}
\begin{split}
(\unn, \vnn)_{\frac{1}{2},\partial \E} 	& = (\unn, \vnn)_{0,\partial \E} + \int_{\partial \E} \int_{\partial \E} \frac{(\unn(\xi)- \unn(\eta)) (\vnn(\xi) - \vnn (\eta))}{\vert \xi -\eta\vert^2} \, d\xi\, d\eta\\
						& => (\unn, \vnn)_{0,\partial \E} + \sum_{\e_i = 1}^ {\NeE}\sum_{\e_j=1}^{\NeE} I_{ij}, \qquad I_{ij} := \int_{\e_i} \int_{\e_j} \frac{(\unn(\xi)- \unn(\eta)) (\vnn(\xi) - \vnn (\eta))}{\vert \xi -\eta\vert^2} \, d\xi\, d\eta,\\
\end{split}
\end{equation}
where $\NeE$ denotes the number of edges of $\E$ and $\{\e_i\}_{i=1}^{\NeE}$ denotes its set of edges.
We observe that, owing to the fact that the stabilization is defined on $\ker(\Pinabla)$, it is possible to drop in \eqref{H12 inner product} the contribution of the $L^2$ inner product.

We discuss now the evaluation of the double integral $I_{ij}$ in \eqref{H12 inner product}. We distinguish three different variants of the mutual locations of two edges $s_i$ and $s_j$.
\begin{enumerate}
\item $s_i$ and $s_j$ are identical ($s_i \equiv s_j$).
In this case, the integrand in \eqref{H12 inner product} has a removable singularity and is, in fact, a polynomial of degree $2p-2$. Such an integral is computed \emph{exactly} by means of a Gau\ss-Lobatto quadrature formula with $\p+1$ points.
\item $s_i$ and $s_j$ are distant ($\overline s_i \cap \overline s_j = \emptyset$). In this case, the integrand in \eqref{H12 inner product} is an analytic function and can be efficiently approximated e.g. by a Gau\ss-Lobatto quadrature rule,
see e.g. \cite[Theorem 5.4]{ChernovSchwab_exponentialGJquadrature}.

\item $s_i$ and $s_j$ share a vertex $\vec{v}$ and make an interior angle $0< \varphi < 2\pi$. Then, $s_i$ and $s_j$ admit local parametrizations
\begin{equation}
s_i = \{\xi = \vec{v} + \vec{a}s \mid 0 < s < 1\}, \qquad
s_j = \{\eta = \vec{v} + \vec{b}t \mid 0 < t < 1\},
\end{equation}
for some $\vec{a}$ and $\vec{b} \in \mathbb R^2$.
Since the functions $u,v \in \VE$ are polynomials of degree $p$ along $s_i$ and $s_j$ and are continuous in $\vec{v}$ there holds
\begin{equation}
u(\xi) - u(\eta) = s\,f(s) - t\,g(t), \qquad
v(\xi) - v(\eta) = s\,q(s) - t\,r(t),
\end{equation}
where $f, g, q$ and $r$ are polynomials of degree $p-1$ and one has, using a change of coordinate,
\begin{equation}
I_{ij} = |\vec{a}|\cdot|\vec{b}| \int_0^1 \int_0^1 F(s,t) \, ds dt, \quad \text{where} \quad F(s,t) = \frac{\big(s\,f(s) - t\,g(t)\big)\big(s\,q(s) - t\,r(t)\big)}{|\vec{a} s - \vec{b} t|^2}.
\end{equation}

The integrand $F(s,t)$ is not smooth in $(0,1)^2$ (its derivatives blow up near the origin) and is not even defined in the origin, but it becomes regular after a coordinate transformation \cite{duffy}.
Having split the integral over the square $(0,1)^2$ into a sum of integrals over the two triangles obtained by bisecting such square with the segment of endpoints $(0,0)$ and $(1,1)$, simple algebra yields
\begin{equation} \label{regularized-common-vtx}
\begin{split}
I_{ij} & = |\vec{a}|\cdot|\vec{b}|  \int_0^1 \int_0^t \big( F(s,t) + F(t,s)\big) \, ds dt \\
& = |\vec{a}|\cdot|\vec{b}|  \int_0^1 \int_0^1 t \cdot \big( F(tz,t) + F(t,tz)\big)  \, dz dt,
\end{split}
\end{equation}
after the transformation $s = tz$ in the inner integral. The integrand admits the representation
\begin{equation}
F(tz,t) = \frac{\big(z\,f(tz) - g(t)\big)\big(z\,q(tz) - r(t)\big)}{|\vec{a} z - \vec{b}|^2},
\end{equation}
which is a rational function with a uniformly positive denominator
\begin{equation}
|\vec{a} z - \vec{b}|^2 \geq 
\left\{
\begin{array}{ll}
|\vec{b}|^2 \sin^2 \varphi,  & \text{ for } \cos \varphi > 0\\
|\vec{b}|^2,               & \text{ for } \cos \varphi \leq 0
\end{array}
\right\}> 0. 
\end{equation}
Hence, the integrand \eqref{regularized-common-vtx} is an analytic function and can be efficiently approximated by Gau\ss-Lobatto quadrature, see e.g. \cite{bernardimaday1992polynomialinterpolationinsobolev}.
\end{enumerate}
\begin{remark}\label{remark stabilization valid also in VEM}
In \cite{beiraolovadinarusso_stabilityVEM}, in the context of the approximation of a 2D Poisson problem,
the possibility of using a stabilization involving only the boundary degrees of freedom was proven. More precisely, a stabilization equal to the boundary $H^1$ norm was employed;
such norm can be related to the one introduced in \eqref{p independent stab} via $\h\p$ polynomial inverse estimates in one dimension.
However, the analysis of \cite{beiraolovadinarusso_stabilityVEM} is not proven for the $p$ version of the method and therefore it is not clear whether the boundary stabilization therein proposed can be employed also for the $p$ analysis.
\end{remark}

\section {Exponential convergence with geometric graded polygonal meshes} \label{section exponential convergence}
In this section, we prove that, employing geometric refined towards $\mathbf 0$ meshes and choosing appropriately a distribution of local degrees of accuracy,
lead to exponential convergence of the energy error in terms of the dimension of the space, that is, in terms of the number of degrees of freedom.

We split the analysis as follows.
In Section \ref{subsection geometric meshes}, we introduce the concept of sequences of polygonal meshes that are geometrically graded towards $\mathbf 0$ (we recall that we are assuming that $\mathbf 0$ is the unique ``singular vertex'' of $\O$, see \eqref{assumption one singular vertex}).
In Section \ref{subsection approximation by harmonic polynomials}, we discuss the approximation results by harmonic polynomials,
whereas in Section \ref{subsection approximation by functions in the Harmonic Element Space} we discuss the approximation results by functions in the harmonic virtual element space.
Finally, in Section \ref{subsection exponential convergence}, we prove, under a proper choice of the vector of the degrees of accuracy, exponential convergence of the energy error in terms of the number of the degrees of freedom.

\subsection{Geometric meshes} \label{subsection geometric meshes}
We describe sequences of geometrically graded meshes that we will employ for proving Theorem~\ref{theorem exponential convergence}.
Let~$\sigma \in (0,1)$ be a given parameter. The sequence~$\{\taun\}$ is such that~$\taun$ consists then of~$n+1$ ``layers'' for every $n \in \mathbb N_0$, where the ``layers'' are defined as follows.

We set the $0$-th layer $L_{n,0}=L_0$ as the set of all polygons $\E \in \taun$ abutting $\mathbf 0$, which we recall is the unique ``singular corner'' of $\O$ by the assumption \eqref{assumption one singular vertex}.
The other layers are defined by induction as
\begin{equation} \label{jth layer}
L_{n,j} = L_j := \{ \E_1 \in \taun \mid \overline \E_1 \cap \overline \E_2 \neq \emptyset \text{ for some }  \E_2 \in L_{j-1} \text{ and } \E_1 \nsubseteq \cup_{i=0}^{j-1} L_i \} \quad \forall j=1,\dots, n.
\end{equation}
Next, we describe a procedure for building geometric polygonal graded meshes.
Let $\mathcal T_0 = \{\O\}$. The decomposition $\mathcal T_{n+1}$ is obtained by refining decomposition $\taun$ \emph{only} at the elements in the finest layer~$L_0$.
In order to have a proper geometric graded sequence of nested meshes, we demand for the following assumption.
\begin{itemize}
\item[(\textbf{D3})]
\begin{equation} \label{assumption geometric graded}
h_\E \approx
\begin{cases}
\sigma^n & \text{if } \E \in L_0,\\
\frac{1-\sigma}{\sigma} \dist (\E,\mathbf 0) & \text{if } \E \in L_j,\quad j=1,\dots,n .
\end{cases}
\end{equation}
\end{itemize}
A consequence of the assumption (\textbf{D3}) is that $h_\E \approx \sigma^{n-j}$, $j$ being the layer to which $\E$ belongs.
This, in addition to~\eqref{assumption geometric graded} guarantees that the distance between $\E \in L_j$, $j=1,\dots,n$ and $\mathbf 0$ is proportional to $\sigma ^{n-j}$.
Moreover, following \cite[(5.6)]{hmps_harmonicpolynomialsapproximationandTrefftzhpdgFEM}, it can be shown that the number of elements in each layer is uniformly bounded with respect to all the geometric parameters discussed so far.

The sequence of nested meshes that we build is then characterized by very small elements near the singularity, whereas the size of the elements increases proportionally with the distance between the elements theirselves and $\mathbf 0$.
\exmp \label{example meshes}
In Figure \ref{figure possible geometric decompositions}, we depict three polygonal meshes satisfying the assumption (\textbf{D3}).
We observe that the mesh in Figure \ref{figure possible geometric decompositions} (right) does not fulfill the star-shapedness assumption (\textbf{D1}).
We depict with different colours polygons belonging to different layers.

\begin{figure}  [h]
\begin{center}
\begin{minipage}{0.30\textwidth}
\begin{tikzpicture}[scale=0.4]
\fill[red,opacity=0.3] (0,-1) -- (1,-1) -- (1,1) -- (-1,1) -- (-1,0) -- (0,0);
\fill[blue,opacity=0.3] (0,-1) -- (0,-2) -- (2,-2) -- (2,2) -- (-2,2) -- (-2,0) -- (-1,0) -- (-1,1) -- (1,1) -- (1,-1);
\fill[yellow,opacity=0.3] (0,-4) -- (0,-2) -- (2,-2) -- (2,2) -- (-2,2) -- (-2,0) -- (-4,0) -- (-4,4) -- (4,4) -- (4,-4);
\draw[black, very thick, -] (0,0) -- (0,-4) -- (4,-4) -- (4,4) -- (-4,4) -- (-4,0) -- (0,0);
\draw[black, very thick, -] (0,0) -- (4,0); \draw[black, very thick, -] (0,0) -- (0,4);
\draw[black, very thick, -] (-4,2) -- (4,2); \draw[black, very thick, -] (2,-4) -- (2,4); \draw[black, very thick, -] (-2,0) -- (-2,4); \draw[black, very thick, -] (0,-2) -- (4,-2);
\draw[black, very thick, -] (-2,1) -- (2,1); \draw[black, very thick, -] (1,2) -- (1,-2);
\draw[black, very thick, -] (-1,0) -- (-1,2); \draw[black, very thick, -] (0,-1) -- (2,-1);
\end{tikzpicture}
\end{minipage}
\begin{minipage}{0.30\textwidth}
\begin{tikzpicture}[scale=0.4]
\fill[red,opacity=0.3] (0,-1) -- (1,-1) -- (1,1) -- (-1,1) -- (-1,0) -- (0,0);
\fill[blue,opacity=0.3] (0,-1) -- (0,-2) -- (2,-2) -- (2,2) -- (-2,2) -- (-2,0) -- (-1,0) -- (-1,1) -- (1,1) -- (1,-1);
\fill[yellow,opacity=0.3] (0,-4) -- (0,-2) -- (2,-2) -- (2,2) -- (-2,2) -- (-2,0) -- (-4,0) -- (-4,4) -- (4,4) -- (4,-4);
\draw[black, very thick, -] (0,0) -- (0,-4) -- (4,-4) -- (4,4) -- (-4,4) -- (-4,0) -- (0,0);
\draw[black, very thick, -] (0,-2) -- (2,-2) -- (2,2) -- (-2,2) -- (-2,0);
\draw[black, very thick, -] (0,-1) -- (1,-1) -- (1,1) -- (-1,1) -- (-1,0);
\draw[black, very thick, -] (0,0) -- (4,4);
\end{tikzpicture}
\end{minipage}
\begin{minipage}{0.33\textwidth}
\begin{tikzpicture}[scale=0.4]
\fill[red,opacity=0.3] (0,-1) -- (1,-1) -- (1,1) -- (-1,1) -- (-1,0) -- (0,0);
\fill[blue,opacity=0.3] (0,-1) -- (0,-2) -- (2,-2) -- (2,2) -- (-2,2) -- (-2,0) -- (-1,0) -- (-1,1) -- (1,1) -- (1,-1);
\fill[yellow,opacity=0.3] (0,-4) -- (0,-2) -- (2,-2) -- (2,2) -- (-2,2) -- (-2,0) -- (-4,0) -- (-4,4) -- (4,4) -- (4,-4);
\draw[black, very thick, -] (0,0) -- (0,-4) -- (4,-4) -- (4,4) -- (-4,4) -- (-4,0) -- (0,0);
\draw[black, very thick, -] (0,-2) -- (2,-2) -- (2,2) -- (-2,2) -- (-2,0);
\draw[black, very thick, -] (0,-1) -- (1,-1) -- (1,1) -- (-1,1) -- (-1,0);
\end{tikzpicture}
\end{minipage}
\end{center}
\caption{Decomposition $\taun$, $n=3$, made of: squares (left), nonconvex hexagons and quadrilaterals (center), nonstar-shaped/nonconvex decagons and nonstar-shaped/nonconvex hexagons (right).
The $0$-th, $1$-st and $2$-nd layers are coloured in light red, blue and yellow, respectively.}
\label{figure possible geometric decompositions}
\end{figure}
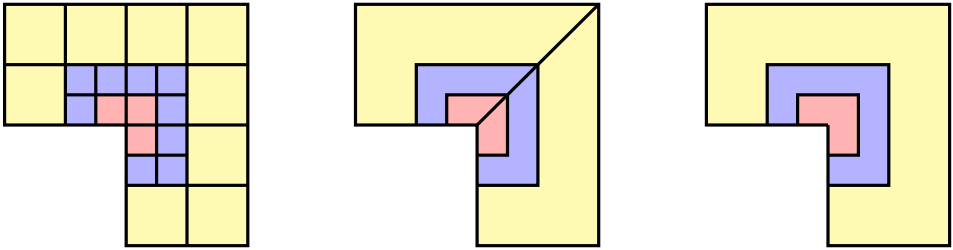
\subsection{Approximation by harmonic polynomials} \label{subsection approximation by harmonic polynomials}
Here, we discuss approximation estimates by means of harmonic polynomials. Such results will be used for the approximation of the first term in \eqref{two terms splitting},
that is the best approximation in the $H^1$ seminorm of the solution to \eqref{weak continuous problem} by harmonic polynomials.

We will firstly deal with the approximation by harmonic polynomials on the polygons that are far from the singularity, see Lemma \ref{lemma approximation harmonic polynomials far from singularity}.
Secondly, we will discuss the approximation estimates by harmonic polynomials on the polygons abutting the singularity, see Lemma \ref{lemma harmonic polynomials near singularity}.

Before that, we recall a (technical) auxiliary result, involving approximation on a polygon $\E$ with $h_\E = 1$ by means of harmonic polynomials.
The proof of this theorem can be found in \cite[Theorem 4.10]{hmps_harmonicpolynomialsapproximationandTrefftzhpdgFEM} and relies on the results in the pioneering works of \cite{melenk1999operator,melenk_phdthesis}.

\begin{thm} \label{theorem HMPS21}
Let $\Ehat$ be a polygon with $h_{\Ehat} = 1$. In particular, meas$(\Ehat)$$ < 1$.
We assume that the following parameters are given:
\begin{equation} \label{parameters HMPS21}
\begin{split}
& \delta \in \left(0,\frac{1}{2}\right]; \qquad
\xi = \begin{cases} 
1 & \text{if } \Ehat \text{ is convex},\\
\frac{2}{\pi} \arcsin{\left( \frac{\rho_0}{1-\rho_0} \right)} & \text{otherwise};
\end{cases} \qquad
c_{\Ehat} = \frac{27}{\xi};\\
& \overline r < \min \left( \frac{1}{3} \left( \frac{\delta}{c_{\Ehat}}\right) ^{\frac{1}{\xi}} , \frac{\rho_0}{4} \right) ; \qquad c_I = \frac{\rho_0}{4};\qquad c_{\text{approx}} \le \frac{7}{\rho_0^2};\qquad \gamma \le  \frac{72}{\rho_0^4},\\
\end{split}
\end{equation}
where we recall that $\rho_0$ is the radius of the ball with respect to which $\Ehat$ is star shaped, see the assumption~(\textbf{D1}).
Let also:
\begin{equation} \label{extension polygon}
\Ehat_{\delta} := \left\{ \widehat {\xbold} \in \mathbb R^2 \mid \dist (\Ehat, \widehat {\xbold}) < \delta \right\}.
\end{equation}
Then, there exists a sequence $\{\qhat_{\p}\}_{\p=1}^{\infty}$, $\qhat_\p  \in \mathbb H_\p (\Ehat)$ for all $\p \in \mathbb N$, of harmonic polynomials such that, for any $\uhat \in W^{1,\infty}(\Ehat _\delta)$,
\begin{equation} \label{estimate with harmonic polynomials diam 1}
\vert \uhat - \qhat_\p \vert_{1,\Ehat} \le \sqrt 2 c_{\text{appr}} \frac{2}{c_I \overline r ^2} \overline r^{-\gamma} (1+ \overline r)^{-\pE} \Vert \uhat \Vert_{W^{1,\infty}(\Ehat_{\delta})}.
\end{equation}
\end{thm}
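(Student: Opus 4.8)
The plan is to reduce \eqref{estimate with harmonic polynomials diam 1} to a statement about polynomial approximation of \emph{analytic} functions, and then to convert the width $\delta$ of the analytic extension into the geometric rate $(1+\overline r)^{-\pE}$. The structural input actually used is that $\uhat$ is harmonic on the (simply connected, for $\delta$ small) neighborhood $\Ehat_\delta$: mere $W^{1,\infty}$ regularity cannot give exponential approximation by harmonic polynomials, since in the $H^1$-seminorm the best harmonic approximant of a nonharmonic function is its harmonic part and the error does not tend to $0$. Being harmonic on $\Ehat_\delta$, we may write $\uhat=\mathrm{Re}(F)$ for a holomorphic $F$, and for every complex polynomial $Q$ of degree $\pE$ one has $\mathrm{Re}(Q)\in\Har_{\pE}(\Ehat)$ together with the identity $|\uhat-\mathrm{Re}(Q)|_{1,\Ehat}=\Vert F'-Q'\Vert_{0,\Ehat}$. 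Since $|F'|=|\nabla\uhat|\le\sqrt2\,\Vert\uhat\Vert_{W^{1,\infty}(\Ehat_\delta)}$, the factor $\sqrt2$ and the right-hand norm of \eqref{estimate with harmonic polynomials diam 1} are accounted for, and it remains to approximate the bounded holomorphic $F'$ on $\Ehat$, in $L^2(\Ehat)$, by a polynomial of degree $\pE-1$.

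Next I would exploit the star-shapedness assumption (\textbf{D1}) to turn the extension width into a rate. In the model (convex) situation, after translating the star-shapedness ball of radius $\rho_0$ to $\mathbf 0$, the dilation $\xbold\mapsto(1+\overline r)\xbold$ sends $\Ehat$ into its $\overline r$-neighborhood, so $F'$ is holomorphic on the $(1+\overline r)$-dilate of $\Ehat$ as soon as $\overline r\lesssim\delta$. A function holomorphic on a $(1+\overline r)$-dilate of a set star-shaped with respect to a ball can be approximated on that set, in $L^2$, by polynomials of degree $\pE-1$ with rate $(1+\overline r)^{-\pE}$; this is the operator-theoretic approximation of Melenk \cite{melenk1999operator,melenk_phdthesis}, specialized to the holomorphic setting in \cite[Theorem 4.10]{hmps_harmonicpolynomialsapproximationandTrefftzhpdgFEM}. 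Integrating the resulting degree-$(\pE-1)$ approximant of $F'$ (the additive constant being irrelevant to the seminorm) furnishes $Q$, whence $\qhat_\pE=\mathrm{Re}(Q)\in\Har_{\pE}(\Ehat)$ and the factor $(1+\overline r)^{-\pE}$. The disk model is the guide: for $F'$ analytic on $B(0,(1+\overline r)R)$ the truncated Taylor remainder on $B(0,R)$ decays like $(1+\overline r)^{-\pE}$.

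The prefactor is then pure bookkeeping. Summing the differentiated geometric series contributes $\big(1-(1+\overline r)^{-1}\big)^{-2}\approx\overline r^{-2}$, while $c_I=\rho_0/4$ measures the minimal distance scale from $\Ehat$ to the dilated contour, together giving the factor $\tfrac{2}{c_I\overline r^2}$; the area and Jacobian factors tied to the ball of radius $\rho_0$ give $c_{\mathrm{appr}}\le 7/\rho_0^2$; and the remaining geometric constants of the Melenk construction on the (possibly nonconvex) star-shaped domain are absorbed into $\overline r^{-\gamma}$ with $\gamma\le 72/\rho_0^4$. Carrying these through with explicit numerical values, rather than with $\lesssim$, is the tedious but routine part.

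The main obstacle is the nonconvex case, where the clean linear relation between the extension width $\delta$ and the expansion factor $1+\overline r$ breaks down. Near a reentrant vertex a fixed absolute extension $\delta$ buys only a reduced multiplicative expansion, and quantifying this forces the sublinear law $\overline r\lesssim\delta^{1/\xi}$ with $\xi=\tfrac2\pi\arcsin(\tfrac{\rho_0}{1-\rho_0})<1$; this is exactly the exponent $1/\xi$ in the threshold for $\overline r$ in \eqref{parameters HMPS21} and the reason why nonconvexity slows the rate, through a smaller admissible $\overline r$, the convex case being the benign specialization $\xi=1$. Establishing this corner estimate uniformly, i.e. controlling the geometry and the associated approximation operator near reentrant vertices in terms of $\rho_0$ and the corner angle, is the crux; everything else is the careful tracking of the constants $c_I$, $c_{\mathrm{appr}}$ and $\gamma$ needed to reach the explicit inequality \eqref{estimate with harmonic polynomials diam 1}.
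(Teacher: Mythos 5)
You should first be aware that the paper does not prove Theorem \ref{theorem HMPS21} at all: it is imported verbatim from \cite[Theorem 4.10]{hmps_harmonicpolynomialsapproximationandTrefftzhpdgFEM}, and the text explicitly states ``We do not discuss the proof''. There is therefore no in-paper argument to compare against; what can be assessed is whether your reconstruction is consistent with the strategy of the cited proof, and it largely is. The reduction $\uhat=\mathrm{Re}(F)$ on the simply connected neighbourhood $\Ehat_\delta$, the identity $\vert\uhat-\mathrm{Re}(Q)\vert_{1,\Ehat}=\Vert F'-Q'\Vert_{0,\Ehat}$ (valid by Cauchy--Riemann), and the appeal to Melenk's operator-adapted polynomial approximation of bounded holomorphic functions on star-shaped domains is exactly the route of \cite{hmps_harmonicpolynomialsapproximationandTrefftzhpdgFEM,melenk1999operator,melenk_phdthesis}. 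Your observation that the statement is false for a general $\uhat\in W^{1,\infty}(\Ehat_\delta)$, and that harmonicity of $\uhat$ on the enlarged set is the tacit structural hypothesis, is correct and worth making explicit: that hypothesis is present in the cited source and silently dropped in the statement above.

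Two caveats. First, your proposal is a reduction rather than a proof: the decisive quantitative ingredient --- polynomial approximation of a bounded holomorphic function on a star-shaped, possibly nonconvex, polygon with rate $(1+\overline r)^{-\pE}$ and with the explicit constants $c_I$, $c_{\text{appr}}$, $\gamma$ and the corner exponent $\xi$ of \eqref{parameters HMPS21} --- is precisely the content of the theorem being proved, and you invoke it rather than establish it; you yourself flag the uniform corner estimate as ``the crux'' and leave it undone, and that is the entire difficulty. Second, the naive dilation argument you give in the convex case (the map $\xbold\mapsto(1+\overline r)\xbold$ sends $\Ehat$ into its $\overline r$-neighbourhood, so $\overline r\le\delta$ would suffice) does not reproduce the much more restrictive threshold $\overline r<\frac13\left(\delta/c_{\Ehat}\right)^{1/\xi}$, which even for $\xi=1$ reads $\overline r<\delta/81$; this mismatch indicates that the actual construction is not a global dilation but a covering/localization argument whose constants cannot be recovered by the bookkeeping you describe. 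At the level of rigor the paper itself adopts here (a citation), your sketch is acceptable; as a standalone proof it is incomplete.
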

\medskip\medskip\medskip

We do not discuss the proof of Thorem \ref{theorem HMPS21},	but we point out that in order to have this result we are using the fact that $\rho_0$ introduced in the assumption~(\textbf{D1}) is such that $\rho_0\in (0,\frac{1}{2})$,
since \cite[Theorem 4.10]{hmps_harmonicpolynomialsapproximationandTrefftzhpdgFEM} holds true under this hypothesis.

As a consequence of Theorem \ref{theorem HMPS21}, for all the regular (in the sense of the assumptions~(\textbf{D1}) and~(\textbf{D2})) polygons $\Ehat$ with diameter $1$ it holds that there exists an harmonic polynomial $\q_{\pE}$ of degree $\pE$ such that
\begin{equation} \label{estimate with harmonic polynomials exponential convergence}
\vert \uhat - \qhat_{\pE} \vert_{1,\Ehat} \le \c \exp{(-b \pE)} \Vert \uhat \Vert_{W^{1,\infty}(\Ehat_{\delta})},
\end{equation}
where $c$ and $b$ are two positive constants depending uniquely on $\rho_0$ introduced in the assumption~(\textbf{D1}) and the ``enlargement factor'' $\delta$ introduced in~\eqref{parameters HMPS21}.
Since both~$\rho_0$ and~$\delta$ are for the time being fixed, then~$c$ and~$b$ are two positive universal constants.

We assume now that the polygon~$\E$ belongs to~$L_j$, $j=1,\dots,n$ and consequently has the diameter unequal to~$1$.
Then, a scaling argument immediately yields
\begin{equation} \label{same inequality as above for variable diameter}
\vert \u - \q_{\pE} \vert_{1,\E} \approx \vert \uhat - \qhat_{\pEhat} \vert_{1,\Ehat} \lesssim \exp{(-b \pEhat)} \Vert \uhat \Vert_{W^{1,\infty}(\Ehat_\delta)} \lesssim \h_{\E_\varepsilon} \exp{(-b \pE)} \Vert \u \Vert_{W^{1,\infty}(\E_\varepsilon)},
\end{equation}
where $\Ehat$, the polygon obtained by scaling $\E$, is such that $h_{\Ehat} = 1$, where $\{\qhat _{\pEhat}\}_{\pEhat = 1} ^{\infty}$ is the sequence validating \eqref{estimate with harmonic polynomials exponential convergence},
where $\E_\varepsilon$ is defined as in \eqref{extension polygon} 
and where the ``enlargement'' factor $\varepsilon$ must be chosen in such a way that when we scale $\E$ to $\Ehat$, then $\E_{\varepsilon}$ is mapped in $\Ehat_{\delta}$, $\delta$ being \emph{exactly} the parameter fixed in \eqref{parameters HMPS21}.

We note that sequence $\{\q_{\pE}\}_{\pE=1}^{\infty}$, which is the pull-back of $\{\qhat_{\pE}\}_{\pE=1}^{\infty}$, consists of harmonic polynomials since it is the composition of a sequence of harmonic polynomials with a dilatation.

What we have to check is that the size of $\E_{\varepsilon}$ is not too large. In particular, we want that $\E_\varepsilon$ is kept separated from the singularity at $\mathbf 0$, for all $L_j$, $j=1,\dots, n$.

Let $\u$ be the solution to problem \eqref{weak continuous problem}.
Henceforth, we assume that $\dist(\E,\mathbf 0) < 1$ (which is always valid if one takes $\Omega$, the domain of problem \eqref{string continuous formulation}, small enough).
From Section \ref{section the model problem}, we know that $\u$, the solution to problem \eqref{string continuous formulation}, is analytic on the set $\mathcal N(\u)$ defined in \eqref{analyticity domain}.
In particular, $\u$ is analytic on the following domain depending on $\E$:
\begin{equation} \label{local analyticity domain}
\mathcal N_\E(\u) = \left\{ \xbold \in \mathbb R^2 \mid \dist (\E, \xbold) < c \frac{\dist(\E,\zerobold)}{d_\u}  \right\} \quad \forall c \in \left(  0, \frac{1}{2}  \right).
\end{equation}
since $\mathcal N_\E(\u) \subset \mathcal N(\u)$. This fact has an extreme relevance in the proof of forthcoming Lemma \ref{lemma approximation harmonic polynomials far from singularity}.
The important issue is that more the polygon is near the singularity, the smaller is the extended domain $\mathcal N_\E(\u)$, see Figure \ref{figure extension gets smaller}.

\begin{figure}  [h]
\begin{center}
\begin{minipage}{0.30\textwidth}
\begin{tikzpicture}[scale=0.4]
\draw[black, very thick, -] (0,0) -- (0,-4) -- (4,-4) -- (4,4) -- (-4,4) -- (-4,0) -- (0,0);
\draw[black, very thick, -] (0,0) -- (4,0); \draw[black, very thick, -] (0,0) -- (0,4);
\draw[black, very thick, -] (-4,2) -- (4,2); \draw[black, very thick, -] (2,-4) -- (2,4); \draw[black, very thick, -] (-2,0) -- (-2,4); \draw[black, very thick, -] (0,-2) -- (4,-2);
\draw(-3,1) node[black] {\tiny{{$\E$}}};
\draw[red, very thick, -] (-5,-1) -- (-1,-1) -- (-1,3) -- (-5,3) -- (-5,-1);
\draw(-3,-1.5) node[black] {\tiny{{$\mathcal N_\E(\u)$}}};
\end{tikzpicture}
\end{minipage}
\begin{minipage}{0.30\textwidth}
\begin{tikzpicture}[scale=0.4]
\draw[black, very thick, -] (0,0) -- (0,-4) -- (4,-4) -- (4,4) -- (-4,4) -- (-4,0) -- (0,0);
\draw[black, very thick, -] (0,0) -- (4,0); \draw[black, very thick, -] (0,0) -- (0,4);
\draw[black, very thick, -] (-4,2) -- (4,2); \draw[black, very thick, -] (2,-4) -- (2,4); \draw[black, very thick, -] (-2,0) -- (-2,4); \draw[black, very thick, -] (0,-2) -- (4,-2);
\draw[black, very thick, -] (-2,1) -- (2,1); \draw[black, very thick, -] (1,2) -- (1,-2);
\draw[black, very thick, -] (-1,0) -- (-1,2); \draw[black, very thick, -] (0,-1) -- (2,-1);
\draw(-1.5,.5) node[black] {\tiny{{$\E$}}};
\draw[red, very thick, -] (-2.5,-.5) -- (-.5,-.5) -- (-.5,1.5) -- (-2.5,1.5) -- (-2.5,-.5);
\draw(-1.5,-0.75) node[black] {\tiny{{$\mathcal N_\E(\u)$}}};
\end{tikzpicture}
\end{minipage}
\begin{minipage}{0.33\textwidth}
\begin{tikzpicture}[scale=0.4]
\draw[black, very thick, -] (0,0) -- (0,-4) -- (4,-4) -- (4,4) -- (-4,4) -- (-4,0) -- (0,0);
\draw[black, very thick, -] (0,0) -- (4,0); \draw[black, very thick, -] (0,0) -- (0,4);
\draw[black, very thick, -] (-4,2) -- (4,2); \draw[black, very thick, -] (2,-4) -- (2,4); \draw[black, very thick, -] (-2,0) -- (-2,4); \draw[black, very thick, -] (0,-2) -- (4,-2);
\draw[black, very thick, -] (-2,1) -- (2,1); \draw[black, very thick, -] (1,2) -- (1,-2);
\draw[black, very thick, -] (-1,0) -- (-1,2); \draw[black, very thick, -] (0,-1) -- (2,-1);
\draw[black, very thick, -] (-1,0.5) -- (1,0.5); \draw[black, very thick, -] (0.5,1) -- (0.5,-1);
\draw[black, very thick, -] (-0.5,0) -- (-0.5,1); \draw[black, very thick, -] (0,-0.5) -- (1,-0.5);
\draw(-1.5/2,.5/2) node[black] {\tiny{{$\E$}}};
\draw[red, very thick, -] (-2.5/2,-.5/2) -- (-.5/2,-.5/2) -- (-.5/2,1.5/2) -- (-2.5/2,1.5/2) -- (-2.5/2,-.5/2);
\draw(-1.5/2,-0.75/2) node[black] {\tiny{{$\mathcal N_\E(\u)$}}};
\end{tikzpicture}
\end{minipage}
\end{center}
\caption{Given $\E$ polygon in $\taun$, its extension keeps separated from the singularity, since the smaller is the polygon the smaller can be taken the extension.}
\label{figure extension gets smaller}
\end{figure}
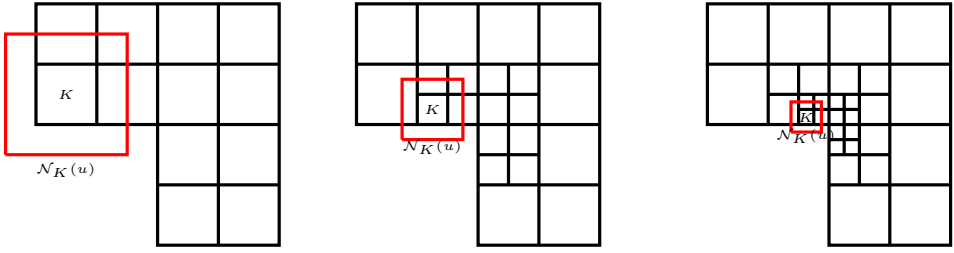

In any case, $\mathcal N_\E(\u)$ remains contained in the global analyticity domain $\mathcal N(\u)$, which is fixed once and for all.

We choose $c = \frac{1}{4}$ in \eqref{local analyticity domain}.
Owing to \eqref{assumption geometric graded} and recalling that $\E \notin L_0$, there exist two constants $0<\alpha_1\le 1 \le \alpha_2$ independent of $\E$ such that $\alpha_1 h_\E \le \dist (\E, \mathbf 0) \le \alpha_2 h_\E$. Thus,
\[
\frac{1}{4} \frac{\dist(\E,\mathbf{0})}{d_\u} = \frac{1}{4} \alpha_1 \frac{\alpha_1^{-1}\dist(\E,\mathbf{0})}{d_\u} \ge \frac{1}{4} \frac{\alpha_1}{d_\u} h_\E.
\]
This implies that $\u$ is analytic on the following domain too:
\begin{equation} \label{reduced local analyticity domain}
\widetilde {\mathcal N} _\E(\u) = \left\{ \xbold \in \mathbb R^2 \mid \dist (\E,\xbold) < \frac{1}{4}  \frac{\alpha_1}{d_\u} h_\E   \right\}\subseteq \mathcal N_\E(\u),\quad \E \in L_j,\; j=1,\dots, n.
\end{equation}
Therefore, we fix for instance $\varepsilon = \frac{1}{8} \frac{\alpha_1}{d_\u}h_\E$. In this way, we have built $\E_\varepsilon = \widetilde {\mathcal N}_\E(\u)$ neighbourhood of $\E$ not covering $\mathbf 0$.

It is straightforward to note that scaling $\E$ to $\Ehat$ with $h_{\Ehat} = 1$, we also scale $\E _{\varepsilon}$ to $\Ehat_{\delta}$ (see \eqref{extension polygon} for the definition of $\Ehat_{\delta}$),
where $\delta = \frac{1}{8} \frac{\alpha_1}{d_\u}$ is now independent of $\E$ and only depends on $\u$.
Fixing such a $\delta$ in Theorem \ref{theorem HMPS21}, we have that \eqref{same inequality as above for variable diameter} holds with $\mathbf 0 \notin \overline{\E_\varepsilon}$;
in particular, the norm appearing on the right-hand side of \eqref{same inequality as above for variable diameter} is finite for all $\E \in L_j$, $j=1,\dots,n$.

We are now ready to state the bound on the best error with respect to harmonic polynomials on the polygons not abutting the singularity.
\begin{lem} \label{lemma approximation harmonic polynomials far from singularity}
Let the assumptions (\textbf{D1})-(\textbf{D3}) hold true, let $\E \in L_j$, $j=1,\dots,n$ and let $\u \in W^{1,\infty}(\widetilde  {\mathcal N}_\E(\u))$,
where $\widetilde {\mathcal N}_\E(\u)$ is defined in \eqref{reduced local analyticity domain}. Then, there exists a sequence $\{\q_{\pE}\}_{\pE=1}^{\infty} \subseteq \{\mathbb H_\p(\E)\}_{\p=1}^{\infty}$ of harmonic polynomials such that
\begin{equation} \label{final estimate harmonic far from singularity}
\vert \u -\q_{\pE} \vert _{1,\E} \lesssim \h_{\widetilde  {\mathcal N}_\E(\u)}\exp{(-b\pE)} \Vert \u \Vert_{W^{1,\infty} (\widetilde {\mathcal N}_\E(\u))} \lesssim \exp{(-b\pE)},
\end{equation}
where $b$ is a constant independent of $\E$.
\end{lem}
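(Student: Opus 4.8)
The plan is to reduce the estimate to the unit-diameter bound of Theorem \ref{theorem HMPS21} by the scaling argument already prepared in the discussion leading to \eqref{same inequality as above for variable diameter}, and then to show that the resulting prefactor is uniformly bounded. First I would rescale $\E$ to a polygon $\Ehat$ with $h_{\Ehat}=1$; since $\E$ (hence $\Ehat$) is regular in the sense of (\textbf{D1})--(\textbf{D2}), Theorem \ref{theorem HMPS21} provides a sequence $\{\qhat_{\pEhat}\}$ of harmonic polynomials satisfying the exponential bound \eqref{estimate with harmonic polynomials exponential convergence} on $\Ehat$, the enlargement parameter $\delta$ being chosen exactly so that $\E_\varepsilon$ is mapped onto $\Ehat_\delta$. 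Pulling $\qhat_{\pEhat}$ back through the dilation produces $\q_{\pE}\in\mathbb H_\p(\E)$, since a dilation preserves both harmonicity and polynomial degree. The chain \eqref{same inequality as above for variable diameter}, together with the identification of $\E_\varepsilon$ with the domain $\widetilde{\mathcal N}_\E(\u)$ fixed in \eqref{reduced local analyticity domain}, then gives the first inequality in \eqref{final estimate harmonic far from singularity} at once.

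The real content is the second inequality, namely that $\h_{\widetilde{\mathcal N}_\E(\u)}\Vert\u\Vert_{W^{1,\infty}(\widetilde{\mathcal N}_\E(\u))}$ is bounded independently of $\E$ (equivalently, of the layer index $j$). I would proceed in three steps. First, because the enlargement radius in \eqref{reduced local analyticity domain} is proportional to $h_\E$, the enlarged set has diameter $\h_{\widetilde{\mathcal N}_\E(\u)}\approx h_\E$. Second, I would bound below the distance to the singular vertex: for $\xbold\in\widetilde{\mathcal N}_\E(\u)$ one has $|\xbold|\ge\dist(\E,\mathbf 0)-\tfrac{1}{4}\tfrac{\alpha_1}{d_\u}h_\E$, and combining $\dist(\E,\mathbf 0)\ge\alpha_1 h_\E$ with $d_\u\ge 1$ yields $|\xbold|\ge\tfrac{3}{4}\alpha_1 h_\E\gtrsim h_\E$, so that $\mathbf 0\notin\overline{\widetilde{\mathcal N}_\E(\u)}$ and $\Phi_1(\xbold)=\min(1,|\xbold|)\gtrsim h_\E$ there. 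Third, inserting this into the derivative bound \eqref{bound derivatives of Babuska functions} with $k=1$ gives $|\nabla\u(\xbold)|\lesssim d_\u/\Phi_1(\xbold)\lesssim 1/h_\E$ on $\widetilde{\mathcal N}_\E(\u)$, while $\u$ is itself uniformly bounded there by the pointwise bounds defining $\Omicron^2_{\beta}(\O)$ in \eqref{countably normed spaces}. Hence $\Vert\u\Vert_{W^{1,\infty}(\widetilde{\mathcal N}_\E(\u))}\lesssim 1/h_\E$, and the product with $\h_{\widetilde{\mathcal N}_\E(\u)}\approx h_\E$ is $\lesssim 1$, which closes the estimate.

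I expect the main obstacle to be exactly this last compensation: the $W^{1,\infty}$ norm of $\u$ genuinely diverges like $1/h_\E$ as $\E$ approaches $\mathbf 0$, and the bound only closes because the diameter factor $\h_{\widetilde{\mathcal N}_\E(\u)}\approx h_\E$ cancels it. Making this rigorous relies on two earlier choices being used in tandem: that the enlargement radius was taken proportional to $h_\E$ and strictly smaller than $\dist(\E,\mathbf 0)$ (so the enlarged domain stays away from the singularity, keeping the $W^{1,\infty}$ norm finite), and that the graded-mesh assumption (\textbf{D3}) forces $\dist(\E,\mathbf 0)\approx h_\E$ for $\E\in L_j$ with $j\ge 1$. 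I would finally track that every implied constant depends only on $\rho_0$, $\delta$, $\alpha_1$, $\alpha_2$ and on $c_\u,d_\u$, and is therefore independent of $\E$ and of the layer, which is what makes the exponential rate $b$ in \eqref{final estimate harmonic far from singularity} uniform.
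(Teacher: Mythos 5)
Your proposal is correct and follows essentially the same route as the paper: the first inequality comes from Theorem \ref{theorem HMPS21} via the scaling argument already set up in \eqref{same inequality as above for variable diameter}, and the second from the pointwise derivative bounds of the $\Omicron^2_{\boldbeta}$ class together with the fact that $\widetilde{\mathcal N}_\E(\u)$ has diameter $\approx h_\E$ and stays at distance $\gtrsim h_\E$ from $\mathbf 0$. The paper's own proof is only a two-line sketch citing the preceding discussion and the computations in \eqref{computations series}; your write-up makes the key cancellation $h_{\widetilde{\mathcal N}_\E(\u)}\cdot\Vert \u\Vert_{W^{1,\infty}}\lesssim h_\E\cdot h_\E^{-1}$ explicit, which is exactly the intended argument.
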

\begin{proof}
The proof follows from Theorem \ref{theorem HMPS21} and the subsequent discussion.
In particular, the first inequality in \eqref{final estimate harmonic far from singularity} follows from a scaling argument, whereas, the second one is a consequence of computations
analogous to those in \eqref{computations series} and the definition of $\widetilde {\mathcal N}_\E(\u)$ in \eqref{reduced local analyticity domain}.
\end{proof}
\medskip

It is clear from the above discussion that we must follow a different strategy for the elements in the first layer; in fact, here, the $W^{1,\infty}$ norm of $\u$ is not finite in principle.

It holds in particular the following result.
\begin{lem} \label{lemma harmonic polynomials near singularity}
Let the assumptions (\textbf{D1})-(\textbf{D3}) hold true. Let $\E \in L_0$. Let $\u \in H^{2,2}_{\beta}(\O)$. Then, there exists $\q_1 \in \mathbb P_{1}(\E)$ such that
\[
\vert \u - \q_1 \vert_{1,\E} \lesssim h_\E^{2(1-\beta)} \Vert \vert \xbold^{\beta} \vert \vert \D^2 \u \vert \Vert_{0,\E}^2 \lesssim \sigma^{2(1-\beta)n}.
\]
In particular, $\q_1$ is a harmonic polynomial.
\end{lem}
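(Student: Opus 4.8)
The plan is to exploit that every affine function is harmonic, so it suffices to produce \emph{some} $\q_1 \in \mathbb P_1(\E)$ realizing the bound; the harmonicity claim is then automatic. Since $|\u - \q_1|_{1,\E}^2 = \Vert \nabla \u - \nabla \q_1 \Vert_{0,\E}^2$ and $\nabla \q_1$ is an \emph{arbitrary} constant vector when $\q_1$ is affine, I would first reduce the statement to a component-wise question: writing $\q_1(\xbold) = c_1 x_1 + c_2 x_2$ with $c_i \in \mathbb R$ free, the error seminorm splits as $\sum_{i=1}^2 \Vert \partial_i \u - c_i \Vert_{0,\E}^2$, and choosing each $c_i$ to be the mean of $\partial_i \u$ over $\E$ minimizes each summand. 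Because $\sum_{i}\Vert\,|\xbold|^\beta\,|\nabla\partial_i\u|\,\Vert_{0,\E}^2 = \Vert\,|\xbold|^\beta\,|\D^2\u|\,\Vert_{0,\E}^2$, the whole lemma follows once one proves, for $w = \partial_i \u$, the \emph{weighted Poincar\'e inequality}
\begin{equation} \label{weighted Poincare plan}
\inf_{c \in \mathbb R} \Vert w - c \Vert_{0,\E} \lesssim h_\E^{1-\beta} \,\Vert\, |\xbold|^\beta\, |\nabla w| \,\Vert_{0,\E},
\end{equation}
with a constant independent of $\E$; squaring and summing over $i=1,2$ then gives $|\u-\q_1|_{1,\E}^2 \lesssim h_\E^{2(1-\beta)}\Vert\,|\xbold|^\beta|\D^2\u|\,\Vert_{0,\E}^2$, the first asserted bound.

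The core of the argument, and the main obstacle, is \eqref{weighted Poincare plan}: the weight $|\xbold|^\beta$ \emph{degenerates} at the corner $\mathbf 0 \in \partial \E$, so one must control the full, unweighted oscillation of $w$ by a gradient that is damped precisely where it can blow up. I would rescale $\E$ to a configuration $\Ehat$ of unit diameter (the corner stays fixed at $\mathbf 0$, and assumptions (\textbf{D1})--(\textbf{D2}) render $\Ehat$ uniformly star-shaped with a bounded number of edges), so that \eqref{weighted Poincare plan} becomes the scale-invariant estimate $\inf_c \Vert \widehat w - c \Vert_{0,\Ehat} \lesssim \Vert\, |\widehat\xbold|^\beta |\nabla \widehat w|\,\Vert_{0,\Ehat}$. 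On $\Ehat$ I would decompose into dyadic annular pieces $\Omega_k := \Ehat \cap \{ 2^{-k-1} < |\widehat\xbold| < 2^{-k} \}$, $k \ge 0$, on each of which $|\widehat\xbold|^{2\beta}\approx 2^{-2k\beta}$. An unweighted Poincar\'e inequality on $2^k\Omega_k$ — uniformly valid thanks to the shape-regularity of the subtriangulation in (\textbf{D1}) — yields $\Vert \widehat w - \overline{w}_k \Vert_{0,\Omega_k}^2 \lesssim 2^{-2k}\Vert \nabla \widehat w\Vert_{0,\Omega_k}^2 \lesssim 2^{-2k(1-\beta)} \Vert\,|\widehat\xbold|^\beta |\nabla \widehat w|\,\Vert_{0,\Omega_k}^2$, where $\overline{w}_k$ is the mean of $\widehat w$ on $\Omega_k$. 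The hypothesis $\beta \in [0,1)$ forces $2^{-2k(1-\beta)} \le 1$, which is exactly what makes the sum over $k$ convergent; a telescoping estimate comparing consecutive means $\overline{w}_k, \overline{w}_{k+1}$ (again via Poincar\'e on the connected union $\Omega_k \cup \Omega_{k+1}$, whose geometric prefactors form a convergent series precisely because $\beta<1$) then replaces the local means by a single global constant $c$. Alternatively one may observe that $|\xbold|^{2\beta}$ is a Muckenhoupt $A_2$ weight for $2\beta\in[0,2)$ and quote a weighted Poincar\'e/Hardy inequality on star-shaped domains.

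Finally I would reassemble. Undoing the scaling restores the factor $h_\E^{2(1-\beta)}$ and turns $\Vert\,|\widehat\xbold|^\beta|\nabla\widehat w|\,\Vert_{0,\Ehat}$ back into $\Vert\,|\xbold|^\beta|\D^2\u|\,\Vert_{0,\E}$, establishing the first inequality. For the second, I would use that every $\E \in L_0$ lies inside the unit ball centred at $\mathbf 0$ for $n$ large, so there $\Phi_\beta(\xbold) = |\xbold|^\beta$ and hence $\Vert\,|\xbold|^\beta |\D^2 \u|\,\Vert_{0,\E} \le \Vert \Phi_\beta |\D^2 \u|\Vert_{0,\O} = |\u|_{H^{2,2}_\beta(\O)}$, a fixed finite quantity independent of the mesh. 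Combined with $h_\E \approx \sigma^n$ for $\E \in L_0$ (assumption (\textbf{D3})), this gives the bound $\lesssim \sigma^{2(1-\beta)n}$; since $\q_1$ is affine it is trivially harmonic, which closes the argument. The only genuinely delicate point is the uniform constant in \eqref{weighted Poincare plan}, i.e.\ the telescoping across dyadic layers, where the restriction $\beta<1$ is indispensable.
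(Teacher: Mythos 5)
Your argument is correct, but it takes a genuinely different route from the paper. The paper simply takes $\q_1$ to be the linear nodal interpolant of $\u$ at three nonaligned vertices of $\E$ and defers the estimate to \cite[Lemma 3]{hpVEMcorner}, i.e.\ to the classical weighted interpolation bounds of Babu\v{s}ka--Guo type (cf.\ \cite[Lemma 4.16]{SchwabpandhpFEM}) applied on the subtriangulation $\tautilden(\E)$ of assumption (\textbf{D1}); that is a two-line reduction to known $\h\p$-FEM results, at the price of needing pointwise values of $\u$ (hence the embedding of $H^{2,2}_\beta$ into $\mathcal C^0$). You instead take the quasi-best affine approximant whose gradient is the mean of $\nabla\u$ over $\E$, reducing everything to a weighted Poincar\'e inequality for the degenerate weight $\vert\xbold\vert^\beta$, which you establish by a dyadic-annulus decomposition around the corner. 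This is self-contained, avoids point evaluations, and yields the sharp constant directly; its cost is that the entire difficulty is concentrated in the uniform constant of the weighted Poincar\'e inequality, which you only sketch. That sketch is sound: near $\mathbf 0$ the rescaled annular pieces are uniformly Lipschitz sectors (use the subtriangulation of (\textbf{D1}) to guarantee connectivity and uniform shape constants of each piece, and patch the finitely many triangles by an overlapping-subdomain argument), the geometric series converges precisely because $\beta<1$, and the alternative of quoting an $A_2$-Muckenhoupt weighted Poincar\'e inequality is legitimate since $\vert\xbold\vert^{2\beta}$ is $A_2$ in $\mathbb R^2$ for $2\beta\in[0,2)$. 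Two further points match the paper's intent: your $\q_1$, unlike the paper's, interpolates $\u$ nowhere, but that is harmless here because $\q_1$ only enters the first, interelement-discontinuous term of \eqref{two terms splitting}; and your squared reading of the asserted estimate ($\vert\u-\q_1\vert_{1,\E}^2\lesssim h_\E^{2(1-\beta)}\Vert\,\vert\xbold\vert^\beta\vert\D^2\u\vert\,\Vert_{0,\E}^2$) is the intended one, consistent with Lemma \ref{lemma harmonic approximation near sing}.
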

\begin{proof}
The polynomial $\q_1$ is given by the linear interpolant of $\u$ at, for instance, three nonaligned vertices of \E. The proof follows the lines of \cite[Lemma 3]{hpVEMcorner}.
\end{proof}

\begin{remark} \label{remark polynomial approx near sing}
Lemma \ref{lemma harmonic polynomials near singularity} suggests that one could also consider harmonic virtual element spaces with nonuniform degrees of accuracy, still guaranteeing exponential convergence for the $\h\p$ version of the method.
In particular, one could consider a distribution of degrees of accuracy which grows linearly as the distance from the singularity increases, as depicted in Figure \ref{figure non uniform degree of accuracy}.

\begin{figure}  [h]
\begin{center}
\begin{minipage}{0.30\textwidth}
\begin{tikzpicture}[scale=0.4]
\draw[black, very thick, -] (0,0) -- (0,-4) -- (4,-4) -- (4,4) -- (-4,4) -- (-4,0) -- (0,0);
\draw[black, very thick, -] (0,-2) -- (2,-2) -- (2,2) -- (-2,2) -- (-2,0);
\draw[black, very thick, -] (0,0) -- (4,4);
\draw(-3.4,3.4) node[black] {{$2$}}; \draw(3.4, -3.4) node[black] {{$2$}};
\draw(-1.4,1.4) node[black] {{$1$}}; \draw(1.4, -1.4) node[black] {{$1$}};
\end{tikzpicture}
\end{minipage}
\begin{minipage}{0.30\textwidth}
\begin{tikzpicture}[scale=0.4]
\draw[black, very thick, -] (0,0) -- (0,-4) -- (4,-4) -- (4,4) -- (-4,4) -- (-4,0) -- (0,0);
\draw[black, very thick, -] (0,-2) -- (2,-2) -- (2,2) -- (-2,2) -- (-2,0);
\draw[black, very thick, -] (0,-1) -- (1,-1) -- (1,1) -- (-1,1) -- (-1,0);
\draw[black, very thick, -] (0,0) -- (4,4);
\draw(-3.4,3.4) node[black] {{$3$}}; \draw(3.4, -3.4) node[black] {{$3$}};
\draw(-1.4,1.4) node[black] {{$2$}}; \draw(1.4, -1.4) node[black] {{$2$}};
\draw(-.4,.4) node[black] {{$1$}}; \draw(.4, -.4) node[black] {{$1$}};
\end{tikzpicture}
\end{minipage}
\begin{minipage}{0.33\textwidth}
\begin{tikzpicture}[scale=0.4]
\draw[black, very thick, -] (0,0) -- (0,-4) -- (4,-4) -- (4,4) -- (-4,4) -- (-4,0) -- (0,0);
\draw[black, very thick, -] (0,-2) -- (2,-2) -- (2,2) -- (-2,2) -- (-2,0);
\draw[black, very thick, -] (0,-1) -- (1,-1) -- (1,1) -- (-1,1) -- (-1,0);
\draw[black, very thick, -] (0,-.5) -- (.5,-.5) -- (.5,.5) -- (-.5,.5) -- (-.5,0);
\draw[black, very thick, -] (0,0) -- (4,4);
\draw(-3.4,3.4) node[black] {{$4$}}; \draw(3.4, -3.4) node[black] {{$4$}};
\draw(-1.4,1.4) node[black] {{$3$}}; \draw(1.4, -1.4) node[black] {{$3$}};
\draw(-0.67,0.67) node[black] {{\tiny{$2$}}}; \draw(0.7, -0.7) node[black] {\tiny{{$2$}}};
\draw(-0.2,0.2) node[black] {{\tiny{$1$}}}; \draw(0.2, -0.2) node[black] {\tiny{{$1$}}};
\end{tikzpicture}
\end{minipage}
\end{center}
\caption{Nonuniform distribution of degrees of accuracy. In layer $L_0$, $\p=1$. In layers $L_j$, $j=1,\dots,n$, $\p\in \mathbb N$.}
\label{figure non uniform degree of accuracy}
\end{figure}
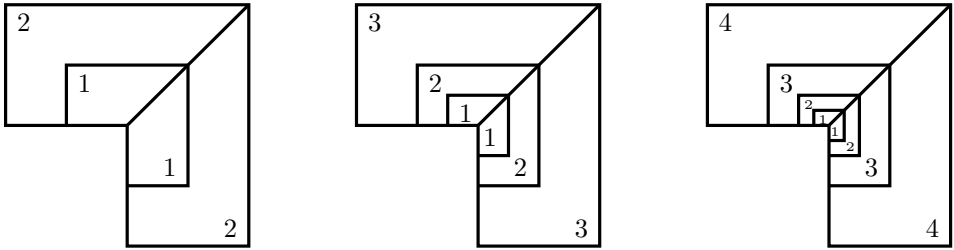
At the interface $\e$ of two nondisjoint elements $\E_0$ and $\E_1$ in layers $L_0$ and $L_1$ one associates $\p_\e = \max(1,\p) = \p$ (\emph{maximum rule})
in order to define nonuniform boundary spaces $\mathbb B(\partial \E)$ similarly to \eqref{boundary space}, as depicted in Figure \ref{figure maximum rule}.
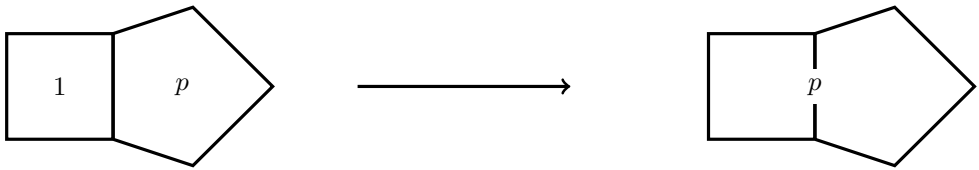
\begin{figure}  [h]
\begin{center}
\begin{minipage}{0.30\textwidth}
\begin{tikzpicture}[scale=0.7]
\draw[black, very thick, -] (0,-1) -- (0,1) -- (-2,1) -- (-2,-1) -- (0,-1);
\draw[black, very thick, -] (0,-1) -- (1.5,-3/2) -- (3,0) -- (1.5,3/2) -- (0,1) -- (0,-1);
\draw(-1,0) node[black] {{$1$}};
\draw( 1.3,0) node[black] {{$\p$}};
\end{tikzpicture}
\end{minipage}
\begin{minipage}{0.30\textwidth}
\begin{tikzpicture}[scale=0.7]
\draw[black, very thick, ->] (-2,0) -- (2,0);
\end{tikzpicture}
\end{minipage}
\begin{minipage}{0.30\textwidth}
\begin{tikzpicture}[scale=0.7]
\draw[black, very thick, -] (0,-1) -- (0,1) -- (-2,1) -- (-2,-1) -- (0,-1);
\draw[black, very thick, -] (0,-1) -- (1.5,-3/2) -- (3,0) -- (1.5,3/2) -- (0,1) -- (0,-1);
\draw(0,0) node[black, fill=white] {{$\p$}};
\end{tikzpicture}
\end{minipage}
\end{center}
\caption{If one considers nonuniform degrees of accuracy, then the largest polynomial degree at the interface can be taken (\emph{maximum rule}).
}
\label{figure maximum rule}
\end{figure}
\end{remark}

In this section, we have thus built a piecewise discontinuous harmonic polynomial with certain approximation properties described in Lemmata \ref{lemma approximation harmonic polynomials far from singularity} and \ref{lemma harmonic polynomials near singularity}.
Such a discontinuous function will be used in the proof of Theorem \ref{theorem exponential convergence} in the approximation of the first term on the right-hand side of \eqref{two terms splitting}.
\subsection{Approximation by functions in the harmonic virtual element space} \label{subsection approximation by functions in the Harmonic Element Space}
Here, we discuss about approximation estimates by functions in the harmonic virtual element space which will be used for the approximation of the second term in \eqref{two terms splitting}.
As in Section \ref{subsection approximation by harmonic polynomials}, we firstly investigate approximation estimates on polygons not abutting the singularity, see Lemma \ref{lemma harmonic approximation far from sing};
secondly, we discuss approximation estimates of polygons in the finest layer $L_0$, see Lemma \ref{lemma harmonic approximation near sing}.
\begin{lem} \label{lemma harmonic approximation far from sing}
Let the assumptions (\textbf{D1})-(\textbf{D3}) hold true. Let $\E \in L_j$, $j=1,\dots, n$ and let $\beta \in [0,1)$.
Let $\g$, the Dirichlet datum of problem \eqref{weak continuous problem}, belong to space $\mathcal B^{\frac{3}{2}}_\beta (\partial \O)$
and let  $\u$, the solution to problem \eqref{weak continuous problem}, belong to space $\mathcal B^{2}_{\beta}(\O)$, see \eqref{countably normed spaces}.
Then, there exists $\uI \in \VE$ such that
\[
\begin{split}
\vert \u - \uI \vert_{1,\E} 	& \lesssim e^{\s+\frac{1}{2}} \left( \frac{\hE}{\pE} \right)^{\s+\frac{1}{2}} \left( \sum_{\e \in \mathcal E^\E} \vert u \vert^2_{m+1,\e}   \right)^{\frac{1}{2}}\\
				& \lesssim e^{\s+\frac{1}{2}} \p^{-\s - \frac{1}{2}} \sigma^{(n-j)(1-\beta)}  \left\{ \vert \u \vert_{H^{\s+1,2}_\beta (\E)} + \vert \u \vert_{H^{\s+2,2}_\beta (\E)}  \right\}\quad \forall \,\s \in \mathbb N_0,
\end{split}
\]
where we recall that $\sigma$ is the geometric grading parameter of the assumption (\textbf{D3}).
\end{lem}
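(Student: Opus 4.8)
The plan is to take $\uI$ to be the \emph{harmonic extension} into $\E$ of an edgewise polynomial interpolant of the trace $\u_{|_{\partial\E}}$, and then to exploit the harmonicity of both $\u$ and $\uI$ to reduce the two-dimensional energy error to a purely one-dimensional boundary quantity. Concretely, on each edge $\e\in\mathcal E^\E$ I would let $\u_I^\partial{}_{|_\e}$ be the degree-$\pE$ Gau\ss--Lobatto interpolant of $\u_{|_\e}$; since this reproduces the vertex values it is globally continuous on $\partial\E$, so $\u_I^\partial\in\mathbb B(\partial\E)$ as in \eqref{boundary space}, and the harmonic function $\uI$ with this trace belongs to $\VE$ by \eqref{local harmonic space}. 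Because $\u$ is harmonic in $\E$ (it solves \eqref{weak continuous problem}), the difference $\u-\uI$ is harmonic with trace $\eta:=\u_{|_{\partial\E}}-\u_I^\partial$, and, as the harmonic extension minimises the Dirichlet energy among all $H^1(\E)$ liftings of its trace, one obtains the scale-invariant bound $\vert\u-\uI\vert_{1,\E}\lesssim \vert\eta\vert_{\frac{1}{2},\partial\E}$, where $\vert\cdot\vert_{\frac{1}{2},\partial\E}$ denotes the Aronszajn--Slobodeckij seminorm (the double-integral part of \eqref{H12 inner product}). The constant here is governed by the shape of $\E$ and is uniform over the mesh family thanks to the star-shapedness assumption (\textbf{D1}) and the edge assumptions (\textbf{D2}).

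Next I would estimate $\vert\eta\vert_{\frac{1}{2},\partial\E}$. Since $\eta$ is continuous on the closed curve $\partial\E$ and vanishes at every vertex, it lies in $H^1(\partial\E)$ and both $\Vert\eta\Vert_{0,\partial\E}^2$ and $\vert\eta\vert_{1,\partial\E}^2$ split as sums of edge contributions. The scale-invariant interpolation inequality on $\partial\E$,
\[
\vert\eta\vert_{\frac{1}{2},\partial\E}^2\lesssim \Vert\eta\Vert_{0,\partial\E}\,\vert\eta\vert_{1,\partial\E}
\le \Big(\sum_{\e\in\mathcal E^\E}\Vert\eta\Vert_{0,\e}^2\Big)^{\frac12}\Big(\sum_{\e\in\mathcal E^\E}\vert\eta\vert_{1,\e}^2\Big)^{\frac12},
\]
then reduces matters to the sharp one-dimensional $\h\p$ interpolation bounds on an interval of length $\hE$ (see \cite{bernardimaday1992polynomialinterpolationinsobolev,SchwabpandhpFEM}), which for $u\in H^{\s+1}(\e)$ give the rates $e^{\s+1}(\hE/\pE)^{\s+1}$ and $e^{\s}(\hE/\pE)^{\s}$ for $\Vert\eta\Vert_{0,\e}$ and $\vert\eta\vert_{1,\e}$ respectively, each times $\vert u\vert_{\s+1,\e}$. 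Multiplying the two rates produces exactly the factor $e^{\s+\frac12}(\hE/\pE)^{\s+\frac12}$, and summing over the uniformly bounded number of edges (\textbf{D2}) yields the first inequality.

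Finally I would pass from the edge seminorms to the weighted seminorms of \eqref{weighted Sobolev norms}. The multiplicative trace inequality applied to the tangential derivatives of $u$ of order $\s+1$ along each edge gives $\vert u\vert_{\s+1,\e}^2\lesssim \hE^{-1}\vert u\vert_{\s+1,\E}^2+\hE\,\vert u\vert_{\s+2,\E}^2$, using $h_\e\approx\hE$ from (\textbf{D2}). For $\E\in L_j$, $j\ge1$, assumption (\textbf{D3}) gives $\dist(\E,\mathbf 0)\approx\hE\approx\sigma^{n-j}$, hence $\vert\xbold\vert\approx\sigma^{n-j}$ on $\E$ and the weight obeys $\Phi_{\beta+k-2}(\xbold)\approx\sigma^{(n-j)(\beta+k-2)}$ uniformly; comparing the top-order term of each weighted seminorm with the corresponding unweighted one yields $\vert u\vert_{\s+1,\E}\lesssim\sigma^{-(n-j)(\beta+\s-1)}\vert u\vert_{H^{\s+1,2}_\beta(\E)}$ and $\vert u\vert_{\s+2,\E}\lesssim\sigma^{-(n-j)(\beta+\s)}\vert u\vert_{H^{\s+2,2}_\beta(\E)}$. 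Inserting these into the first inequality, all powers of $\sigma^{n-j}$ collapse into the single factor $\sigma^{(n-j)(1-\beta)}$, which is the claimed rate. I expect the main obstacle to be the reduction of the first paragraph: one must certify that the harmonic-extension energy is controlled by the $H^{1/2}(\partial\E)$ seminorm of its trace with a constant \emph{independent} of the element, uniformly over the whole family of geometrically graded polygons — this is precisely where the shape-regularity encoded in (\textbf{D1})--(\textbf{D2}) enters, whereas the one-dimensional interpolation and trace estimates are standard and citable.
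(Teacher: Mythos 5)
Your proposal is correct and follows essentially the same route as the paper's proof: the same choice of $\uI$ as the harmonic lifting of the edgewise Gau\ss--Lobatto interpolant, the same reduction of $\vert \u-\uI\vert_{1,\E}$ to an $H^{1/2}(\partial \E)$ quantity of the boundary interpolation error, the same one-dimensional Bernardi--Maday estimates combined by interpolation to produce the exponent $\s+\tfrac12$, the same multiplicative trace inequality, and the same conversion to weighted seminorms via $\hE\approx\dist(\E,\mathbf 0)\approx\sigma^{n-j}$. The only (immaterial) difference is the reduction step, where you invoke the Dirichlet-energy-minimisation property of harmonic extensions together with trace/lifting theory, while the paper integrates by parts and applies a Neumann trace inequality for harmonic functions --- both arguments hinge on the harmonicity of $\u-\uI$ and yield the same bound by $\Vert \u-\uGL-c\Vert_{\frac{1}{2},\partial\E}$ (equivalently, the $H^{1/2}$ seminorm).
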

\begin{proof}
Before proving the result, we observe that the $H^{\sE+1}(\e)$ seminorm exists for all edges $\e$ of $\E$, since $\u \in \mathcal B _\beta^2(\O)$ implies that $\u$ is analytic far from the singularity.

Let us consider $\uI \in \VE$ defined as the weak solution to the following local Laplace problem:
\begin{equation} \label{harmonic interpolant}
\begin{cases}
-\Delta \uI = 0 & \text{in } \E\\
\uI = \uGL & \text{on } \partial \E,\\
\end{cases}
\end{equation}
where $\uGL$ is the Gau\ss-Lobatto interpolant of degree $\pe$ of $\u$ on each edge $\e$. Then, using the fact that $\u-\uI$ is harmonic and using a Neumann trace inequality \cite[Theorem A.33]{SchwabpandhpFEM}, one gets
\begin{equation} \label{estimate with the trick of the c}
\begin{split}
\vert \u - \uI \vert^2_{1,\E} = \int_{\partial \E} \partial _\n(\u - \uI) (\u - \uI-c) 	& \le \left\Vert  \partial_\n (\u - \uI)  \right\Vert_{-\frac{1}{2},\partial \E} \Vert \u - \uGL -c\Vert_{\frac{1}{2}, \partial \E}\\
																& \lesssim \vert \u -\uI \vert_{1,\E} \Vert \u - \uGL -c \Vert_{\frac{1}{2}, \partial \E},
\end{split}
\end{equation}
for every $c \in \mathbb R$.

We deduce that we must deal with the boundary error term only.  We fix $c=0$ in \eqref{estimate with the trick of the c} (the case $c \neq 0$ will become important in the following).
Since $\u$ is analytic far from the singularity, we inherit the two following results from \cite[Theorems 4.2 and 4.5]{bernardimaday1992polynomialinterpolationinsobolev}:
\[
\begin{split}
& \Vert \u - \uGL \Vert_{0,\e} \lesssim e^{\sE+1} \left( \frac{h_\e}{\pe} \right) ^{\sE+1}  \vert \u \vert_{\sE+1,\e},\quad \forall \e \text{ edge of }\E, \; \forall \sE \in \mathbb N_0,\\
& \vert \u - \uGL \vert_{1,\e} \lesssim e^{\sE} \left( \frac{h_\e}{\pe} \right) ^{\sE}  \vert \u \vert_{\sE+1,\e} ,\quad \forall \e \text{ edge of }\E, \; \forall \sE \in \mathbb N_0.\\
\end{split}
\]
Using interpolation theory \cite{Triebel}, recalling from the assumption (\textbf{D2}) that $h_\e \approx h_\E$ and that the number of edges of each $\E \in \taun$ is uniformly bounded, yield
\begin{equation} \label{Bernardi Maday argument}
\Vert \u - \uI \Vert ^2_{\frac{1}{2},\partial \E} = \Vert \u - \uGL \Vert ^2_{\frac{1}{2},\partial \E} \lesssim e^{2\sE+1} \left( \frac{h_\E}{\pE}   \right)^{2\sE+1} \sum_{\e \in \mathcal E^\E} \vert \u \vert_{\sE+1,\e}^2. 
\end{equation}
We apply a multiplicative trace inequality \cite[Theorem 1.6.6]{BrennerScott},
the fact that the maximum number of edges of $\E$ is uniformly bounded, see the assumption (\textbf{D2}), and the trivial bound $|a||b| \le a^2+b^2$, $a$, $b\in \mathbb R$, getting
\begin{equation} \label{scaling multiplicative trace inequality}
\sum_{\e\in \mathcal E^\E} \vert \u \vert^2_{\s+1, \e} \lesssim \hE^{-1} \vert \u \vert^2_{\sE+1,\E} + \hE\vert \u \vert ^2 _{\sE+2,\E}.
\end{equation}
Recalling the definition of the weighted Sobolev seminorms \eqref{weighted Sobolev norms}, one obtains
\begin{equation} \label{deducing Babuska seminorms}
\vert \u \vert^2_{H_\beta^{\s+\ell,2}(\E)} \ge \Vert \Phi_{\beta + \s + \ell - 2} \,\vert \D^{(\s+\ell)}\u \vert \,\Vert_{0,\E}^2 \gtrsim \dist(\E,\mathbf 0) ^{2(\beta + \s +\ell - 2)} \vert \u \vert^2_{\s+\ell,\E},\quad \ell =1,2.
\end{equation}
Combining \eqref{assumption geometric graded}, \eqref{scaling multiplicative trace inequality} and \eqref{deducing Babuska seminorms}, we deduce
\begin{equation} \label{Babuska estimate trace}
\vert \u \vert^2_{\s + 1, \partial \E} \lesssim \hE^{-2 (\beta + \s - \frac{1}{2})} \left\{  \vert \u \vert^2_{H_\beta^{\s+1,2}(\E)} + \vert \u \vert^2_{H_\beta^{\s+2,2}(\E)} \right\}.
\end{equation}
Finally, recalling from the assumption~(\textbf{D3}) that $\hE \approx \sigma^{n-j}$, we get the claim by inserting~\eqref{Babuska estimate trace} in~\eqref{Bernardi Maday argument}.
\end{proof}

Next, we turn our attention to the approximation in the polygons belonging to the first layer.
\begin{lem} \label{lemma harmonic approximation near sing}
Let the assumptions (\textbf{D1})-(\textbf{D3}) hold true. Let $\E \in L_0$ and let $\beta\in [0,1)$.
Let $\g$, the Dirichlet datum of problem \eqref{weak continuous problem}, belong to space $\mathcal B^{\frac{3}{2}}_\beta (\partial \O)$ and let
$\u$, the solution to problem \eqref{weak continuous problem}, belong to space $\mathcal B^{2}_{\beta}(\O)$ \eqref{countably normed spaces}.
Then, there exists $\uI \in \VE$ such that
\[
\vert \u - \uI \vert^2_{1,\E} \lesssim \sigma^{2n(1-\beta)},
\]
where we recall that $\sigma$ is the geometric grading parameter of the assumption (\textbf{D3}).
\end{lem}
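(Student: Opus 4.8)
The plan is to avoid any genuinely \emph{virtual} (i.e.\ nonpolynomial) approximant altogether and to reuse directly the low-order harmonic polynomial already produced in Lemma~\ref{lemma harmonic polynomials near singularity}. The obstacle that prevents us from arguing as in Lemma~\ref{lemma harmonic approximation far from sing} is precisely that $\mathbf 0$ is now a vertex of $\E$: the solution $\u$ is singular there, so it is neither analytic on a neighbourhood of $\overline \E$ nor does it have finite $W^{1,\infty}$ norm, and the high-order Gau\ss-Lobatto edge estimates of \cite{bernardimaday1992polynomialinterpolationinsobolev} (whose $H^{\s+1}(\e)$ seminorms would blow up) are unavailable. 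The saving observation is quantitative: for $\E \in L_0$ the assumption~(\textbf{D3}) gives $h_\E \approx \sigma^n$, so a mere first-order (in the weighted norm) approximation already matches the target bound $\sigma^{2n(1-\beta)} \approx h_\E^{2(1-\beta)}$, and no high order of accuracy is needed near the corner.

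The key structural fact I would record first is that the local space $\VE$ of \eqref{local harmonic space} already contains all affine functions. Indeed, any $\q \in \mathbb P_1(\E)$ is harmonic, and its restriction to each edge $\e \in \mathcal E^\E$ is a univariate polynomial of degree at most one, hence an element of $\mathbb P_{\pe}(\e)$ because every local degree satisfies $\pe \ge 1$; together with continuity across the vertices this yields $\q{}_{|_{\partial \E}} \in \mathbb B(\partial \E)$, so that $\mathbb P_1(\E) \subseteq \VE$. (More generally, the same reasoning shows $\Har_{\pE}(\E) \subseteq \VE$, i.e.\ harmonic polynomials are legitimate virtual element functions; only the inclusion $\mathbb P_1(\E)\subseteq\VE$ is needed here.)

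With this in hand I would simply set $\uI := \q_1$, where $\q_1 \in \mathbb P_1(\E)$ is the linear (hence harmonic) interpolant of $\u$ at three nonaligned vertices of $\E$ supplied by Lemma~\ref{lemma harmonic polynomials near singularity}. By the inclusion above $\uI \in \VE$, and the desired bound
\[
\vert \u - \uI \vert^2_{1,\E} = \vert \u - \q_1 \vert^2_{1,\E} \lesssim h_\E^{2(1-\beta)} \lesssim \sigma^{2n(1-\beta)}
\]
is then exactly Lemma~\ref{lemma harmonic polynomials near singularity} combined with $h_\E \approx \sigma^n$. Accordingly, I do not expect any genuine analytic difficulty in this statement: the only real point is recognizing that the polynomial approximant already lives in the local harmonic virtual element space, so that near the singularity neither a nonpolynomial function nor any trace/interpolation estimate is required. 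The single delicate aspect is not internal to this local lemma but surfaces only when assembling a \emph{globally continuous} $\uI \in \Vng$ in Theorem~\ref{theorem exponential convergence}, where the edge traces chosen on the $L_0$ elements must be reconciled with those used on the adjacent $L_1$ elements; that matching is handled on the skeleton via the maximum rule and does not affect the local estimate proven here.
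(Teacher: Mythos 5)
Your argument does prove the lemma as literally stated, but by a genuinely different and much shorter route than the paper. The paper does \emph{not} take $\uI=\q_1$: it defines $\uI$ as the harmonic lifting \eqref{harmonic interpolant} of the \emph{edgewise} linear interpolant $\uGL$ of $\u$ (interpolation at the two endpoints of each edge), and then estimates $\vert\u-\uI\vert_{1,\E}$ via the Neumann-trace bound \eqref{estimate with the trick of the c}, a further trace inequality passing from $\Vert\u-\uGL-c\Vert_{\frac12,\partial\E}$ to a volume norm of $\u-\utildeGL$, a Poincar\'e inequality with $c$ the mean value, the subtriangulation $\tautilden(\E)$ from assumption (\textbf{D1}), and the weighted interpolation estimate of \cite[Lemma 4.16]{SchwabpandhpFEM}. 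Your shortcut --- observing $\mathbb P_1(\E)\subseteq\VE$ and recycling $\q_1$ from Lemma \ref{lemma harmonic polynomials near singularity} --- is valid for producing \emph{some} $\uI\in\VE$ with the stated local bound, and it buys a one-line proof at the price of re-using the weighted estimate already invoked there.

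However, your closing remark about the gluing is where the real content of the paper's choice lies, and your dismissal of it is not right. The lemma is used in \eqref{two terms splitting} to supply the $L_0$ piece of a \emph{globally continuous} $\uI\in\Vng$ (the paper stresses this at the end of Section \ref{subsection approximation by functions in the Harmonic Element Space}). The trace of your $\q_1$ on an edge of $\E$ is determined by the values of $\u$ at three selected vertices of $\E$, not by $\u$ restricted to that edge; on any edge whose endpoints are not both among the three chosen vertices, $\q_1$ does not even interpolate $\u$ at the mesh vertices, so it cannot match the trace used on the neighbouring element, and the resulting global function is not in $\Vn$. The \emph{maximum rule} does not repair this: it reconciles polynomial degrees across an interface, not two different interpolation operators. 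Moreover, if you then modify the trace of your approximant on $\partial\E$ to restore conformity, the interior function (the harmonic lifting of that trace) changes, and the local estimate you proved for $\q_1$ no longer applies to the modified function. The paper's construction avoids all of this precisely because the trace of its $\uI$ on each edge depends only on $\u$ restricted to that edge, which is what makes the local interpolants assemble into an element of $\Vng$. So your proof covers the literal statement but not the object the lemma is actually needed for in Theorem \ref{theorem exponential convergence}.
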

\begin{proof}
Let $\uI$ be defined as in \eqref{harmonic interpolant}, with $\uGL$ being now the linear interpolant of $\u$ on each edge $\e$ of $\E$.
Let $\tautilden(\E)$ be the subtriangulation of $\E$ obtained by joining $\mathbf 0$ with the other vertices of $\E$. Such a subtriangulation is regular, see the assumption (\textbf{D1}).

From \eqref{estimate with the trick of the c}, we have
\[
\vert \u - \uI \vert_{1,\E} \lesssim \Vert \u - \uGL -c \Vert_{\frac{1}{2},\partial \E} \quad \forall c \in \mathbb R.
\]
We denote by $\utildeGL$ the linear interpolant of $\u$ over every $\T \in \tautilden(\E)$ at the three vertices of $\T$.
One obviosuly has $\utildeGL = \uGL$ on $\partial \E$. Applying a trace inequality, we get
\[
\vert \u - \uI \vert_{1,\E} \lesssim \Vert \u - \utildeGL - c \Vert_{1,\E}.
\]
By picking $c$ the average of $\u - \utildeGL$ over $\E$, applying a Poincar\'e inequality and recalling that $\card(\tautilden)$ is uniformly bounded, we deduce
\[
\vert \u - \uI \vert _{1,\E}^2 \lesssim \sum_{\E \in \tautilden(\E)} \vert \u - \utildeGL \vert^2_{1,\T}.
\]
In order to conclude, we apply \cite[Lemma 4.16]{SchwabpandhpFEM} and \eqref{assumption geometric graded} obtaining
\[
\vert \u - \uI \vert^2_{1,\E} \lesssim \sum_{\E \in \tautilden(\E)} h_\T^{2(1-\beta)} \Vert \vert \xbold \vert ^\beta \vert D^2 \u \vert \Vert^2_{0,\T}
\lesssim \sigma ^{2n(2-\beta)} \Vert \vert \xbold \vert ^\beta \vert D^2 \u \vert \Vert^2_{0,\T} \lesssim \sigma^{2n(1-\beta)},
\]
which holds true owing to the fact that $\u \in \mathcal B^2_\beta (\O)$.
\end{proof}

Again, for the proof of Lemma \ref{lemma harmonic approximation near sing}, one could have used nonuniform degrees of accuracy as discussed in Remark \ref{remark polynomial approx near sing}.


In order to conclude this section, we highlight that we built in Lemmata \ref{lemma harmonic approximation far from sing} and \ref{lemma harmonic approximation near sing} a continuous approximant
of $\u$, which belongs to space $\Vng$ \eqref{global harmonic space}.

\subsection*{The $\h$ version of harmonic VEM for quasi-uniform meshes.}
Although the goal of this paper is to study the $\h\p$ version of harmonic VEM, it is worthwhile to mention that the $\h$ version of the method employing sequences of quasi-uniform meshes, see e.g. \cite{VEMvolley} for the definition of quasi-uniform meshes,
easily follows by combining Lemma \ref{lemma Strang harmonic}, \cite[Theorem 2]{babumelenk_harmonicpolynomials_approx} and Lemma \ref{lemma harmonic approximation far from sing}.

In particular, assuming that $u$, the solution to problem \eqref{weak continuous problem}, belongs to $H^{s+1}(\Omega)$, $s\in \mathbb R_+$,
and that we employ harmonic virtual element spaces with a uniform degree of accuracy $\p$, one gets
\begin{equation}\label{h version}
\vert \u - \un \vert_{1,\Omega} \lesssim \h^{\min(s,\p)} \Vert \u \Vert_{s+1,\Omega},
\end{equation}
where the hidden constant depends on $s$, on the shape of the elements in the mesh and the choice of the stabilization, but is independent of the mesh size $\h$.

\subsection{Exponential convergence} \label{subsection exponential convergence}
Here, we discuss the main result of the work, namely the exponential convergence of the energy error in terms of the number of degrees of freedom.
In order to achieve such a result, we fix as a degree of accuracy
\begin{equation} \label{standard choice degree of accuracy}
\p = n+1, \quad \quad \text{$n+1$ being the number of layers of $\taun$.}
\end{equation}
The main result of the paper follows.
\begin{thm} \label{theorem exponential convergence}
Let $\{\taun\}_{n\in \mathbb N_0}$ be a sequence of polygonal decomposition satisfying the assumptions (\textbf{D1})-(\textbf{D3}).
Let $\u$ and $\un$ be the solutions to problems \eqref{weak continuous problem} and \eqref{discrete problem}, respectively.
Let $\g$, the Dirichlet datum introduced in \eqref{weak continuous problem}, belong to $\mathcal B^{\frac{3}{2}}_\beta(\partial \O)$. Then,
the following holds true:
\begin{equation} \label{error decay hpHVEM}
\vert \u- \un \vert_{1,\O} \lesssim \exp{(-b \sqrt[2] N)},
\end{equation}
where $b$ is a constant independent of the discretization parameters and $N$ is the number of degrees of freedom of $\Vn$ defined in \eqref{global harmonic space}.
\end{thm}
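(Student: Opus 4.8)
The plan is to combine the quasioptimality bound of Lemma~\ref{lemma Strang harmonic} with the layer-by-layer approximation results of Sections~\ref{subsection approximation by harmonic polynomials} and~\ref{subsection approximation by functions in the Harmonic Element Space}, and then to convert the resulting exponential-in-$n$ estimate into the exponential-in-$\sqrt N$ estimate \eqref{error decay hpHVEM} through a degrees-of-freedom count. First I would apply \eqref{two terms splitting} with the following explicit choices: for the piecewise harmonic polynomial $\upi\in S^{\p,\Delta}(\O,\taun)$ I take, on every $\E\in L_j$ with $j\ge 1$, the polynomial $\q_{\pE}$ of Lemma~\ref{lemma approximation harmonic polynomials far from singularity}, and on every $\E\in L_0$ the linear polynomial $\q_1$ of Lemma~\ref{lemma harmonic polynomials near singularity}; for the harmonic virtual element function $\uI\in\Vng$ I take the harmonic interpolant \eqref{harmonic interpolant} provided by Lemmata~\ref{lemma harmonic approximation far from sing} and~\ref{lemma harmonic approximation near sing}, which, being built from edgewise Gau\ss-Lobatto interpolation, is globally continuous and therefore an admissible element of $\Vng$.

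I would then estimate the two bracketed broken seminorms by summing the squared local errors over the layers. On the finest layer $L_0$ the contributions are controlled directly by Lemmata~\ref{lemma harmonic polynomials near singularity} and~\ref{lemma harmonic approximation near sing}, both of which decay like a fixed positive power of $\sigma^n=e^{-n\vert\log\sigma\vert}$ and are hence exponentially small in $n$. On the layers $L_j$ with $j\ge 1$ the harmonic-polynomial error is $\lesssim e^{-b\pE}$ by \eqref{final estimate harmonic far from singularity}; since each layer contains a uniformly bounded number of polygons (the remark following assumption~(\textbf{D3})) and there are $n+1$ layers, squaring and summing leaves this of order $n\,e^{-2b\p}$.

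The delicate step is the far-layer virtual element term. Lemma~\ref{lemma harmonic approximation far from sing} only supplies the algebraic factor $e^{\s+\frac12}\p^{-\s-\frac12}$ multiplied by the \Babuska seminorms $\vert\u\vert_{H^{\s+1,2}_\beta(\E)}+\vert\u\vert_{H^{\s+2,2}_\beta(\E)}$ and the geometric weight $\sigma^{(n-j)(1-\beta)}$. Because $\u\in\mathcal B^2_\beta(\O)$, these seminorms grow at most like $c_\u d_\u^{\,\s}\s!$ by \eqref{countably normed spaces}; inserting this bound and applying Stirling's formula reduces the prefactor to $\big(d_\u\s/\p\big)^{\s}$ up to algebraic factors, so the choice $\s\approx\p/(e\,d_\u)$ turns it into a genuine exponential $e^{-b'\p}$. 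The weight $\sigma^{(n-j)(1-\beta)}$ is then harmless: after squaring, summing over $j$ produces the convergent geometric series $\sum_{k\ge 0}\sigma^{2k(1-\beta)}<\infty$, so the entire far-layer virtual element contribution is again $\lesssim e^{-2b'\p}$. Collecting all four estimates and recalling the choice $\p=n+1$ from \eqref{standard choice degree of accuracy}, the bracket in \eqref{two terms splitting} is bounded by $e^{-cn}$ for some $c>0$.

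It remains to absorb the stability prefactor and to count the degrees of freedom. By \eqref{explicit bound alpha}, $\alpha(\p)$ grows only polynomially in $\p=n+1$, hence slower than $e^{\varepsilon n}$ for every $\varepsilon>0$, so it merely lowers the exponential constant without changing the rate. For the count, the local space on $\E$ has dimension $\p\cdot\#(\text{edges of }\E)$, the number of edges per polygon is uniformly bounded by~(\textbf{D2}), each of the $n+1$ layers carries a bounded number of polygons, and $\p=n+1$; consequently $N\approx n\cdot\p\approx n^2$, that is $n\approx\sqrt N$. Substituting this into $\vert\u-\un\vert_{1,\O}\lesssim e^{-cn}$ yields \eqref{error decay hpHVEM}. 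The principal obstacle is precisely the optimal calibration of the Sobolev index $\s$ in the far-layer virtual element estimate: it is the exploitation of the analytic (countably normed) regularity of $\u$ that upgrades the merely algebraic $\p$-convergence of Lemma~\ref{lemma harmonic approximation far from sing} to the exponential rate demanded by the theorem.
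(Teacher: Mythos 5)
Your proposal is correct and follows essentially the same route as the paper's own (sketched) proof: quasioptimality via Lemma \ref{lemma Strang harmonic}, the four layerwise approximation lemmata for the two terms in \eqref{two terms splitting}, absorption of the polynomially growing $\alpha(\p)$, and the count $N\approx(n+1)^2$ giving $n\approx\sqrt N$. The only difference is that you make explicit the ``standard $hp$ approximation strategies'' step the paper delegates to the literature, namely the calibration $\s\approx\p/(e\,d_\u)$ via Stirling's formula to upgrade the algebraic bound of Lemma \ref{lemma harmonic approximation far from sing} to an exponential one, which is exactly the intended argument.
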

\begin{proof}
We only give the sketch of the proof. Applying Lemma \ref{lemma Strang harmonic}, bound \eqref{explicit bound alpha},
Lemmata \ref{lemma harmonic approximation far from sing} to \ref{lemma harmonic approximation near sing} to the first term on the right-hand side of \eqref{two terms splitting} along with standard $hp$ approximation strategies \cite{SchwabpandhpFEM}
and Lemmata \ref{lemma approximation harmonic polynomials far from singularity} and \ref{lemma harmonic polynomials near singularity}
to the second term of the right-hand side of \eqref{two terms splitting} along with \cite[Theorem 5.5]{hmps_harmonicpolynomialsapproximationandTrefftzhpdgFEM}, we have
\begin{equation} \label{energy error exponential convergence number of layers}
\vert \u - \un \vert_{1,\E} \lesssim \exp{(-\widetilde b (n+1))},
\end{equation}
for some $\widetilde b$ independent of the discretization parameters, $n+1$ being the number of layers in $\taun$.

In order to conclude, it suffices to find out the relation between $n$ and $N$, the number of degrees of freedom of space $\Vn$.
To this end, we recall from \cite[(5.6)]{hmps_harmonicpolynomialsapproximationandTrefftzhpdgFEM} that in each layer $L_j$
there exists a fixed maximum number of elements, see the assumption (\textbf{D3}).
Moreover, thanks to the assumption (\textbf{D2}), there exists a fixed maximum number of edges per element.

If we set $N_{\text{edge}}$ the maximum number of edges per element and $N_{\text{element}}$ the maximum number of elements per layer, we conclude that
\[
N = \dim(\Vn) \lesssim N_{\text{edge}} N_{\text{element}} \sum_{j=0}^{n} (n+1) \lesssim (n+1)^2.
\]
In particular, $\sqrt N \lesssim n$. This, along with \eqref{energy error exponential convergence number of layers}, gives the assertion.
\end{proof}

\section {Numerical results} \label{section numerical results}
\subsection{Numerical results: $\h$ version} \label{subsection h version numerical results}
In this section, we present numerical results validating the algebraic rate of convergence of the $\h$ version of the method stated in \eqref{h version}.

To this purpose, we consider the following test case. Let $\Omega$, the domain of problem \eqref{weak continuous problem}, be the square domain
\[
\Omega = (0,1)^2
\]
and let $\u$, the solution to the problem, be
\[
\u(x,y) = \exp(x) \sin(y),
\]
which is an analytic harmonic function over $\mathbb R^2$.

Moreover, we observe that since the functions in the harmonic virtual element space are known only via their degrees of freedom,
we cannot explicitly compute the energy error.
Therefore, we study the following normalized broken $H^1$ error between $\u$ and the energy projection of $\un$:
\begin{equation} \label{computable error}
\frac{\vert \u - \Pinabla_{\pbold} \un \vert_{1,n,\O}}{\vert \u \vert_{1,\Omega}} := \frac{\sqrt{\sum_{\E \in \taun} \left \vert \u - \PinablaE_{\pE} \un \right\vert^2_{1,\E}}}{\vert \u \vert_{1,\Omega}},
\end{equation}
where $\PinablaE_{\pE}$ is defined in \eqref{nabla projector}, for all $\E\in \taun$.

Importantly, the rate of convergence of the error in \eqref{computable error} is the same as the one of the exact $H^1$ error. In order to see this, we apply a triangle inequality and the stability of the $H^1$ projection, to get
\begin{equation} \label{estimate comp err}
\vert \u - \Pinabla \un \vert_{1,n,\O} \le \vert \u - \Pinabla \u \vert_{1,n,\O} + \vert \Pinabla(\u - \un) \vert_{1,n,\Omega} \le \vert \u - \Pinabla \u \vert_{1,n,\O} + \vert \u - \un \vert_{1,n,\Omega}
\end{equation}
and after that we apply Lemma \ref{lemma Strang harmonic}, \cite[Theorem 2]{babumelenk_harmonicpolynomials_approx} and Lemma \ref{lemma harmonic approximation far from sing}.

We test the method employing sequences made of three types mesh, see Figure \ref{figure quasi uniform meshes}, namely a squares, a Voronoi-Lloyd and an hexagonal mesh.
\begin{figure}  [h]
\centering
\subfigure {\includegraphics [angle=0, width=0.32\textwidth]{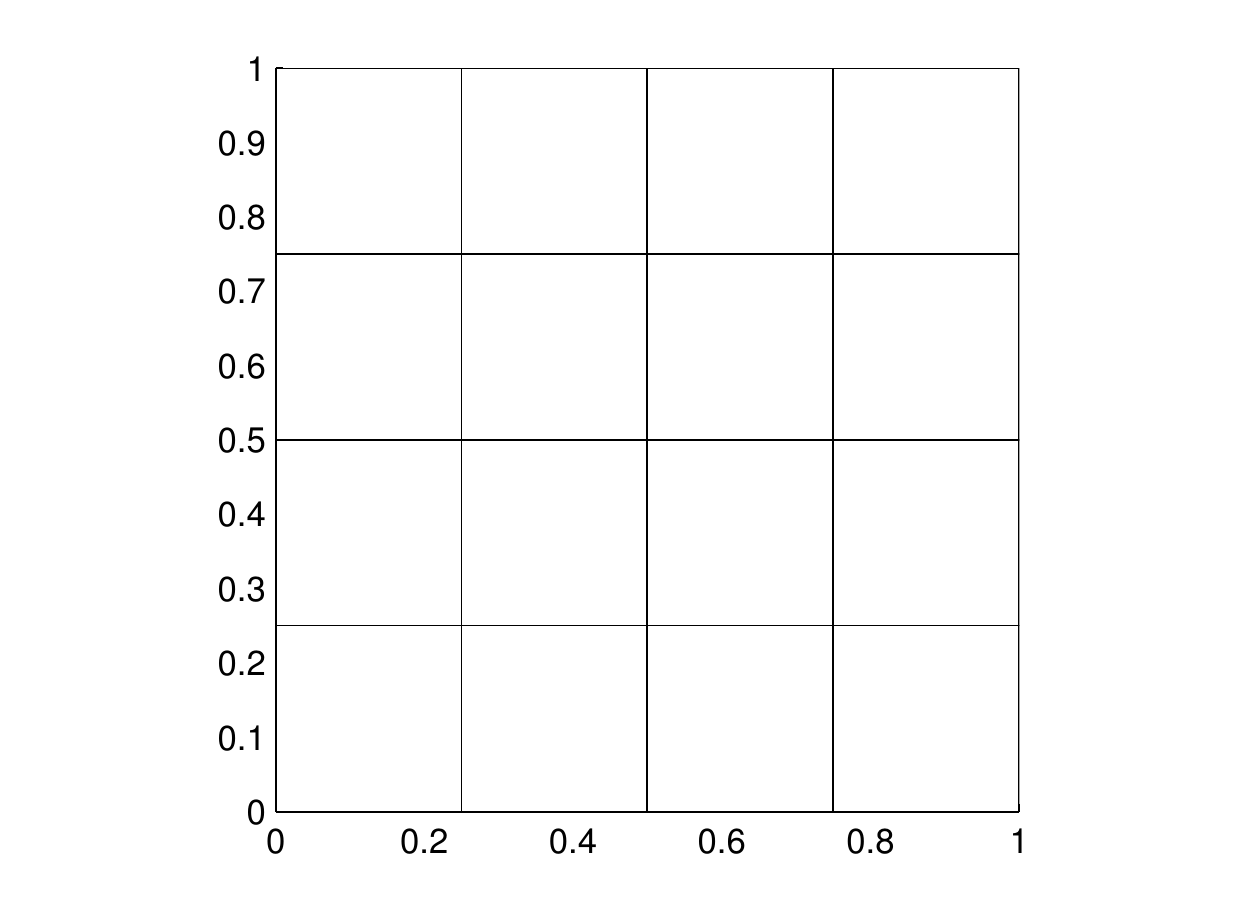}}
\subfigure {\includegraphics [angle=0, width=0.32\textwidth]{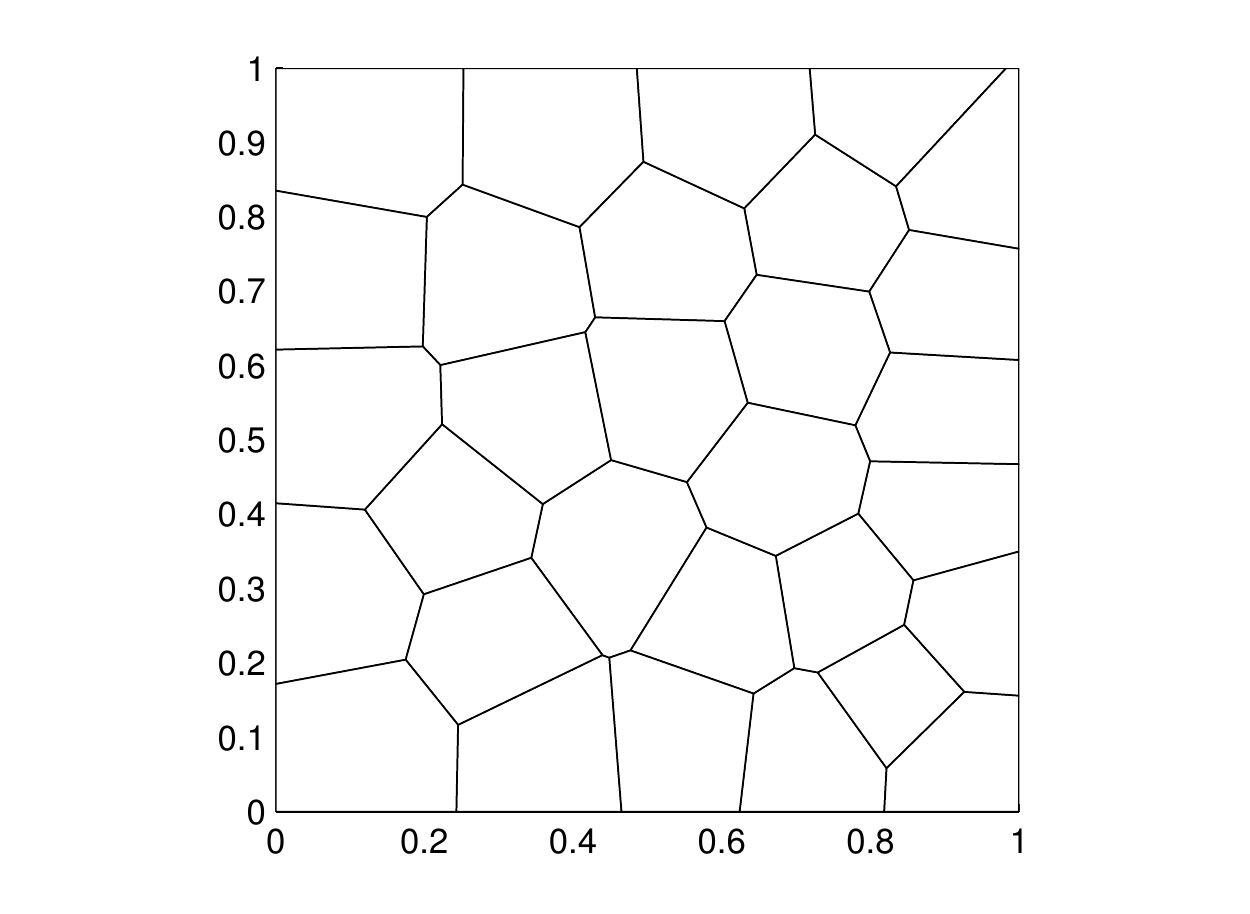}}
\subfigure {\includegraphics [angle=0, width=0.32\textwidth]{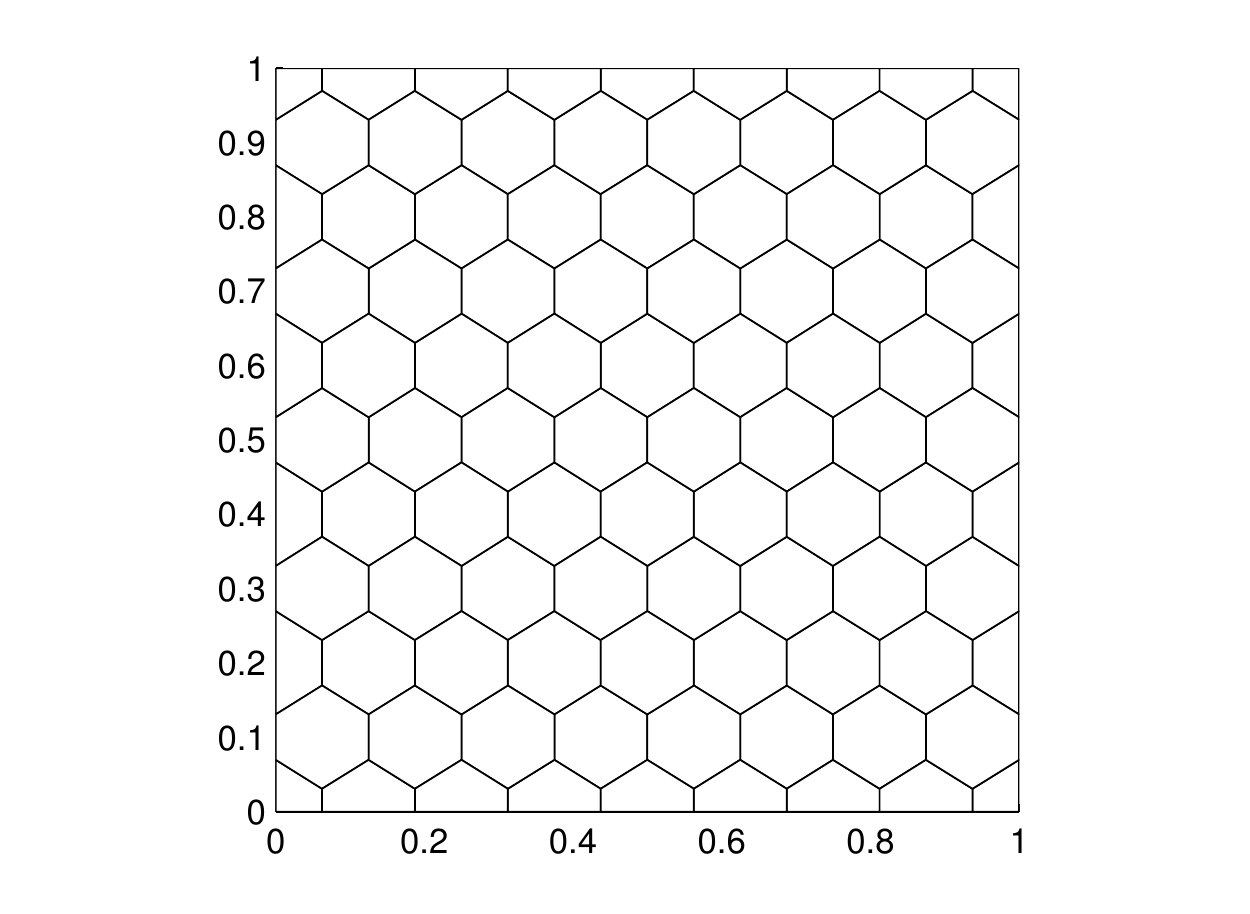}}
\caption{Left: a square mesh. Center: a Voronoi-Lloyd mesh. Right: an hexagonal mesh.}
\label{figure quasi uniform meshes}
\end{figure}
We also pick as uniform degrees of accuracy $\p=1$, $2$, $3$ and $4$.
The numerical results are collected in Figure \ref{figure nt:h} and are in accordance with the expected rate of convergence in \eqref{h version}.
\begin{figure}  [h]
\centering
\subfigure {\includegraphics [angle=0, width=0.32\textwidth]{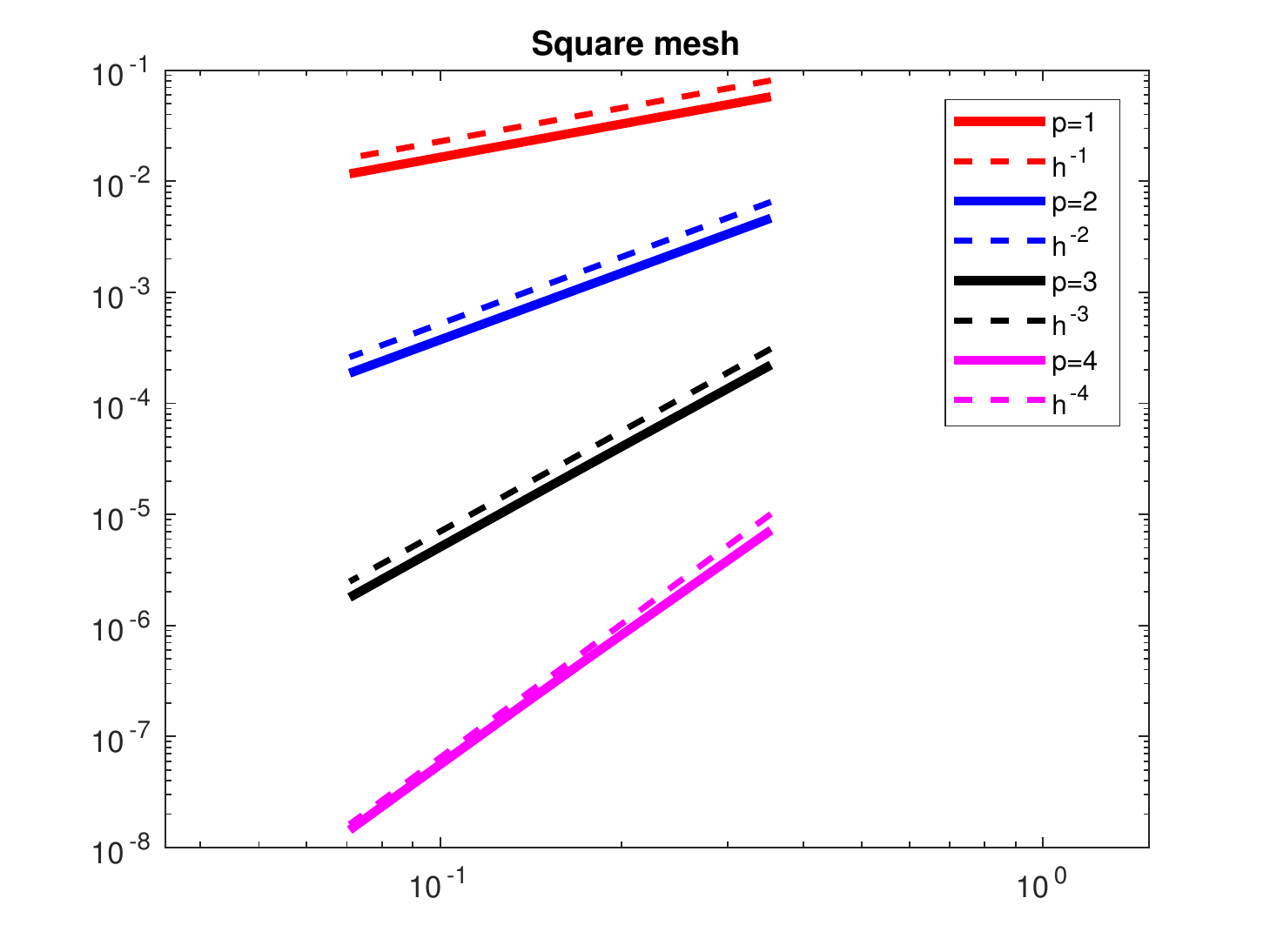}}
\subfigure {\includegraphics [angle=0, width=0.32\textwidth]{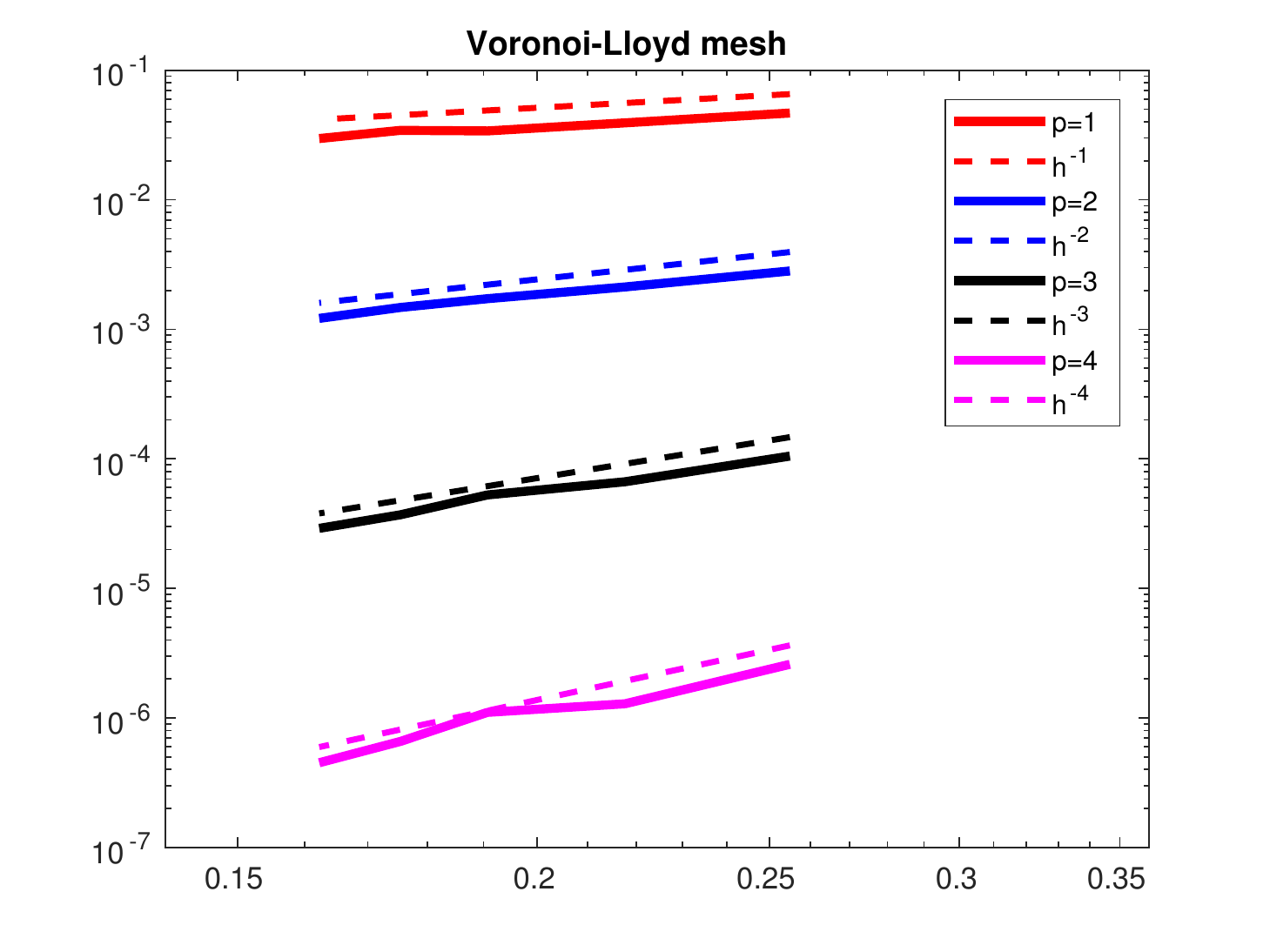}}
\subfigure {\includegraphics [angle=0, width=0.32\textwidth]{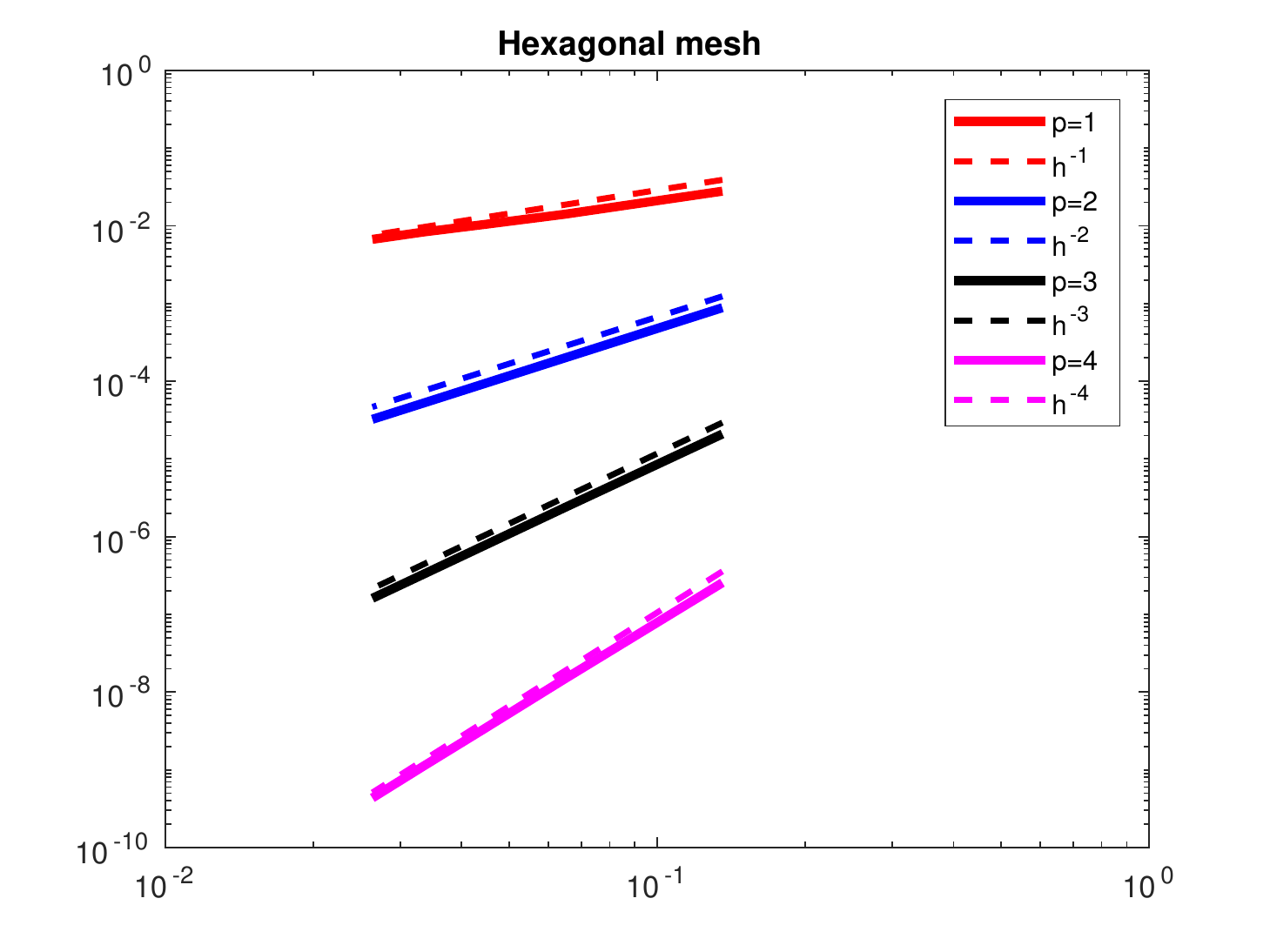}}
\caption{Error \eqref{computable error} on three sequences of meshes made of elements as those depicted in Figure \ref{figure quasi uniform meshes}.
We consider here the $\h$ version of the method. The degrees of accuracy are $\p=1$, $2$, $3$ and $4$.
Left: square meshes. Center: Voronoi-Lloyd meshes. Right: hexagonal meshes.}
\label{figure nt:h}
\end{figure}

\subsection{Numerical results: $\h\p$ version} \label{subsection hp version numerical results}
In this section, we present numerical experiments validating the exponential rate of convergence of the $\h\p$ version of the method stated in Theorem \ref{theorem exponential convergence}.
To this end, we consider the following test case. Let $\O$, the domain of problem \eqref{weak continuous problem}, be the L-shaped domain
\begin{equation} \label{domain numerical experiments}
\O = (-1,1)^2 \setminus (-1,0]^2
\end{equation}
and let $\u$, the solution to \eqref{weak continuous problem}, be
\begin{equation} \label{solution numerical experiments}
\u(r, \theta)  = r^{\frac{2}{3}} \sin \left( \frac{2}{3} \left( \theta + \frac{\pi}{2} \right) \right),
\end{equation}
where $r$ and $\theta$ are the polar coordinates of the real plane.
We observe that the such a function belongs to $H^{\frac{5}{3} - \varepsilon} (\O)$, for all $\varepsilon >0$, but not to $H^{\frac{5}{3}}(\O)$, and moreover that $\u$ is harmonic.

\subsubsection{Numerical tests on polygonal geometric graded meshes} \label{subsubsection numerical tests on polygonal geometric graded meshes}
We consider sequences of meshes as those depicted in Figure~\ref{figure possible geometric decompositions} and we consider two different distributions  of local degrees of accuracy.

We firstly investigate in Figure \ref{figure numerical test p uniform} the performances of the harmonic VEM choosing
a distribution of degrees of accuracy $\p$ as in \eqref{standard choice degree of accuracy}.
Under this choice, we know that Theorem \ref{theorem exponential convergence} holds true.

Secondly, we investigate in Figure \ref{figure numerical test p mu graded} the performances of the harmonic VEM by taking a nonuniform distribution of degrees of accuracy. In particular, we consider the (graded) distribution given by
\begin{equation} \label{graded degrees of accuracy}
\p_\E = j+1, \quad \quad 	\text{where} \quad \quad \E \in L_j,\quad j=0,\dots,n.
\end{equation}
At the interface of two polygons in different layers one associate a polynomial degree~$\p_\e$ via the \emph{maximum rule} as in Figure~\ref{figure maximum rule}, thus modifying straightforwardly the definition of the space $\mathbb B(\partial \E)$
defined in~\eqref{boundary space}.
It is worth to notice that under choice \eqref{graded degrees of accuracy} the dimension of space $\Vn$ is asymptotically $\frac{1}{2} n^2$, $n+1$ being the number of layers.
Such a dimension is comparable with the one of space $\Vn$ assuming~\eqref{standard choice degree of accuracy}, which is asymptotically $n^2$.

In both figures, we consider sequences of meshes with different geometric refinement parameters $\sigma$;
we recall that the properties fulfilled by $\sigma$ are discussed in the assumption (\textbf{D3}).
We fix in particular $\sigma=\frac{1}{2}$, $\sigma=\sqrt 2 -1$ and $\sigma=(\sqrt 2 -1)^2$.

As observed already in Section \ref{subsection h version numerical results}, we study the computable error in \eqref{computable error} in lieu of the exact one,
whose convergence in terms of the number of degrees of freedom is the same as the one of the exact $H^1$ error.

On the $y$-axis we consider the logarithm of the error defined in \eqref{computable error}, while in the $x$-axis we put the square root of the number of degrees of freedom.

\begin{figure}  [h]
\centering
\subfigure {\includegraphics [angle=0, width=0.32\textwidth]{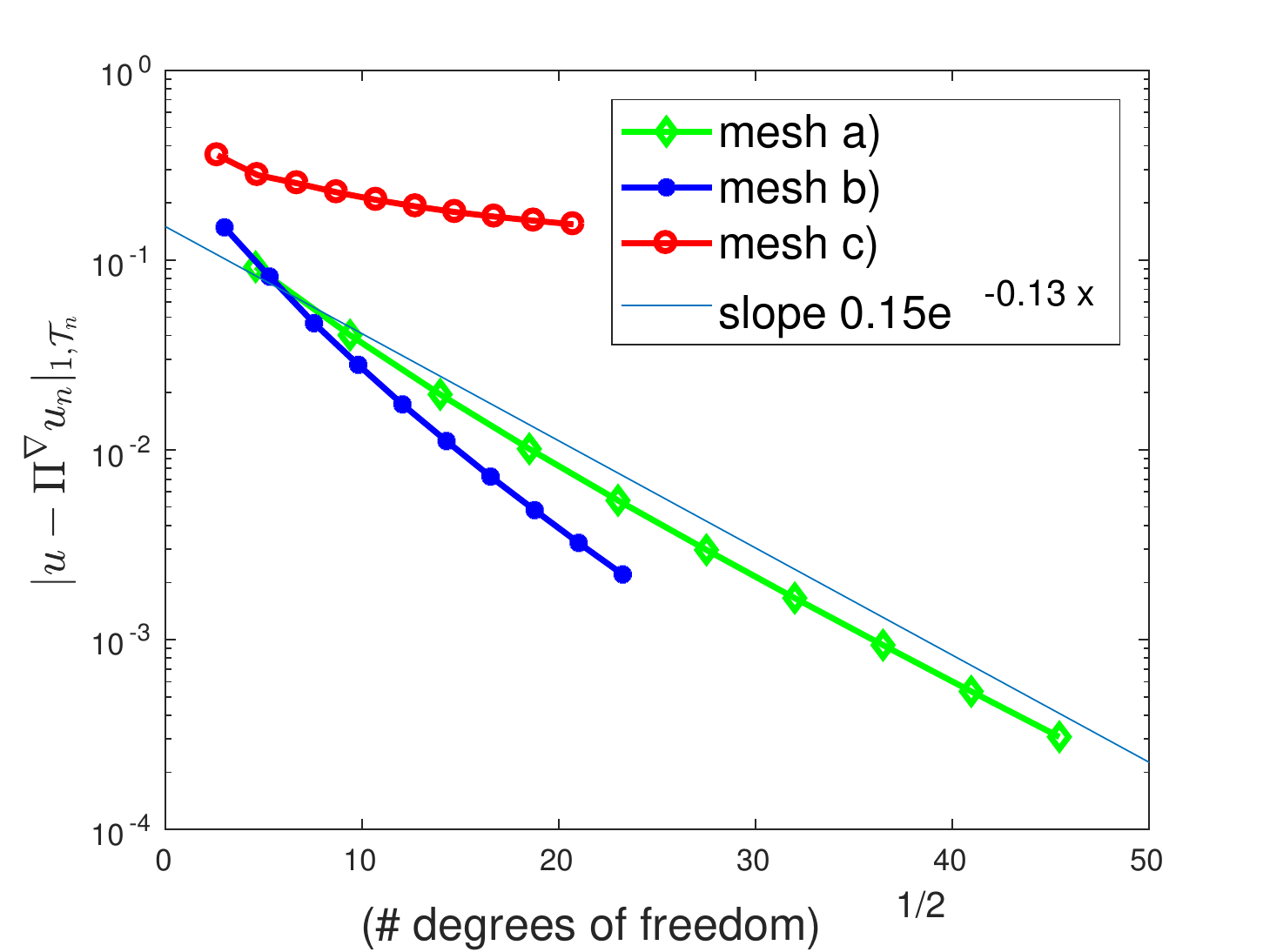}}
\subfigure {\includegraphics [angle=0, width=0.32\textwidth]{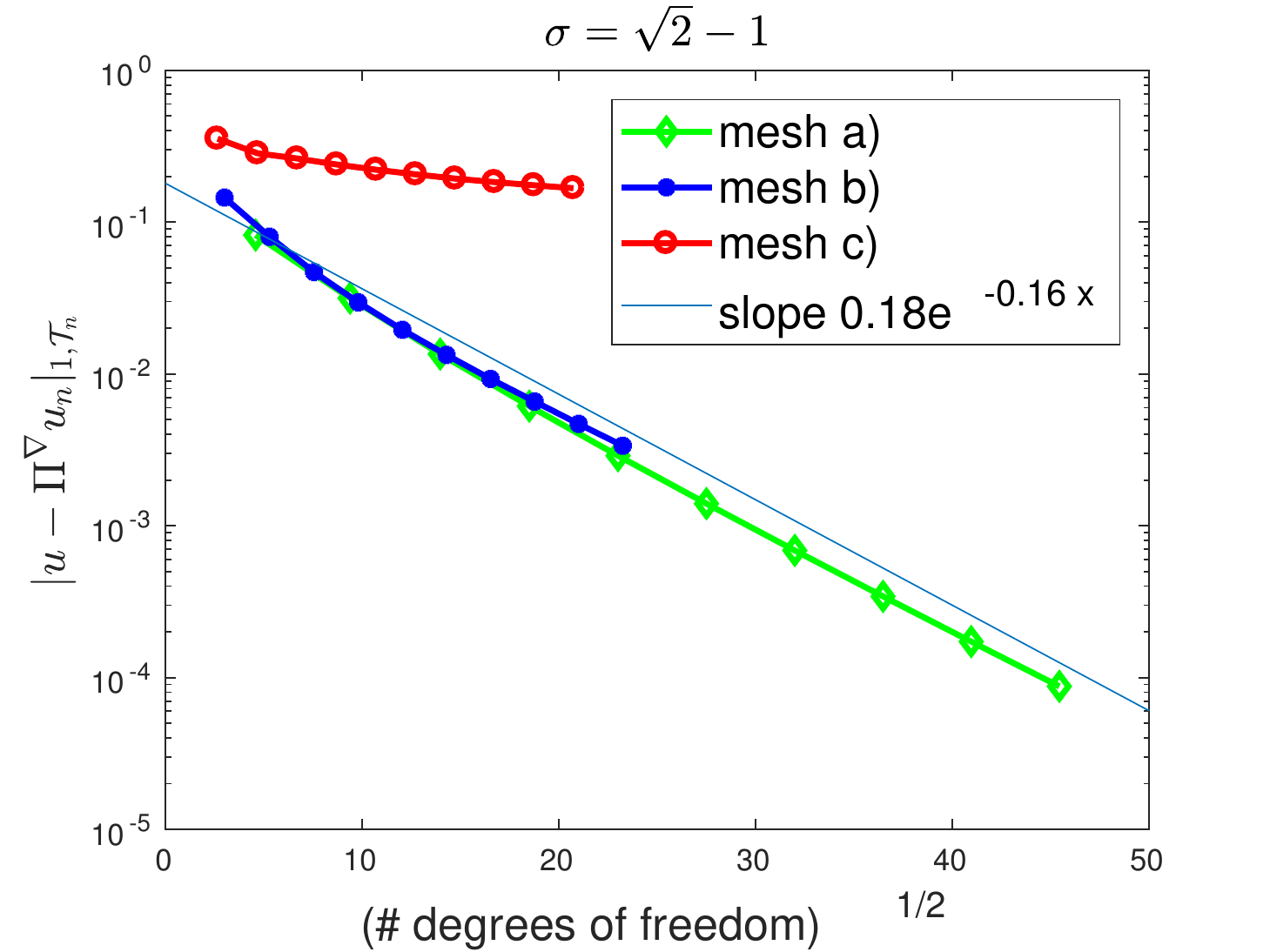}}
\subfigure {\includegraphics [angle=0, width=0.32\textwidth]{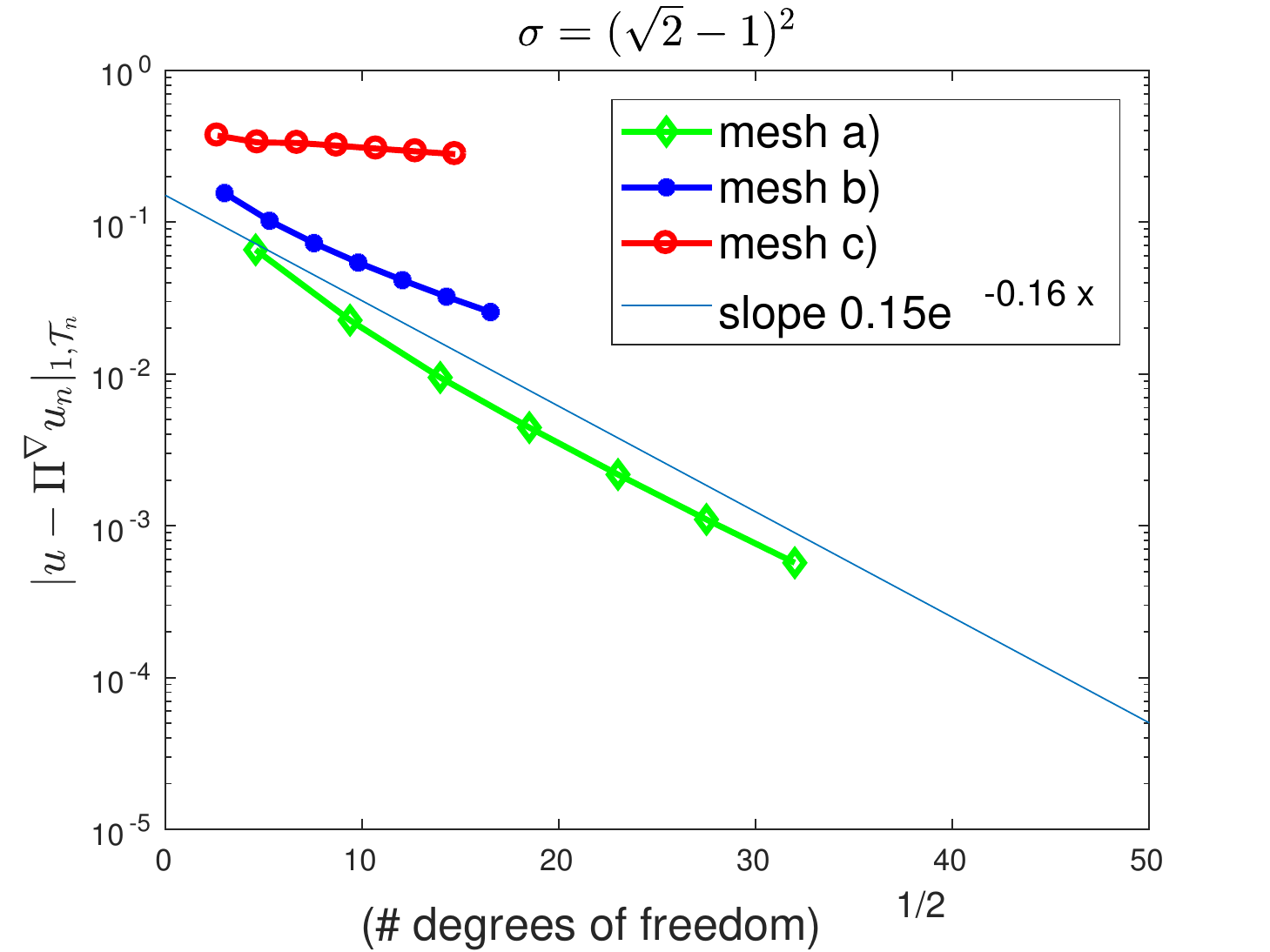}}
\caption{Error \eqref{computable error} on the three sequences of graded meshes made of elements as those in Figure \ref{figure possible geometric decompositions}.
We consider here the $\h\p$ version of the method.
We denote with a), b) and c) the meshes whose elements are as in Figure \ref{figure possible geometric decompositions} (left), (center) and (right), respectively.
The geometric refinement parameters are $\sigma=\frac{1}{2}$ (left), $\sigma=\sqrt 2-1$ (center), $\sigma=(\sqrt 2-1)^2$ (right).
On each element, the local degree of accuracy is uniform and equal to the number of layers.
We depict for mesh a) the slope $\exp(-b\sqrt[2]{N})$ for some positive constant $b$, in order to check the exponential decay of the $H^1$ error.} \label{figure numerical test p uniform}
\end{figure}

\begin{figure}  [h]
\centering
\subfigure {\includegraphics [angle=0, width=0.32\textwidth]{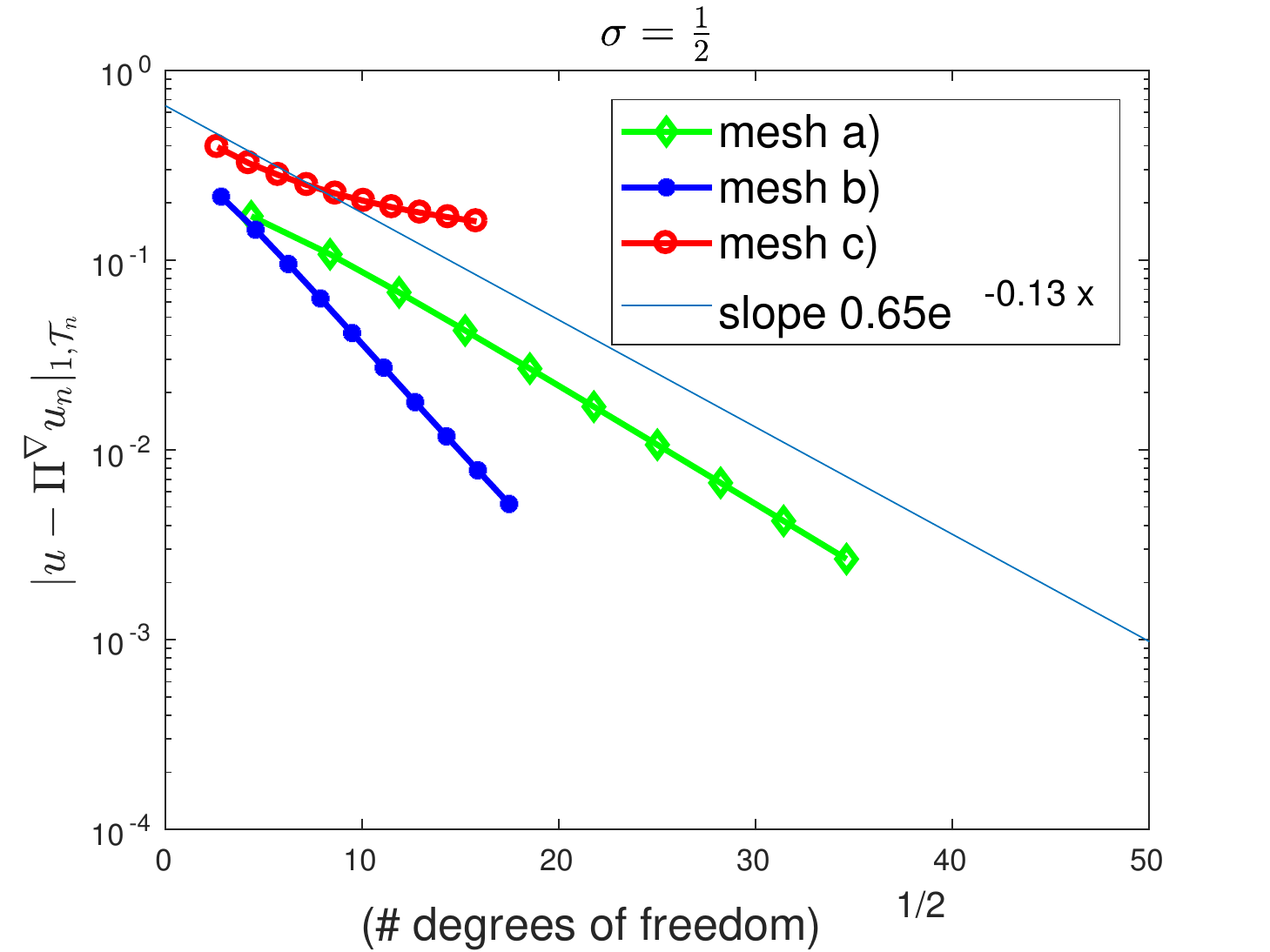}}
\subfigure {\includegraphics [angle=0, width=0.32\textwidth]{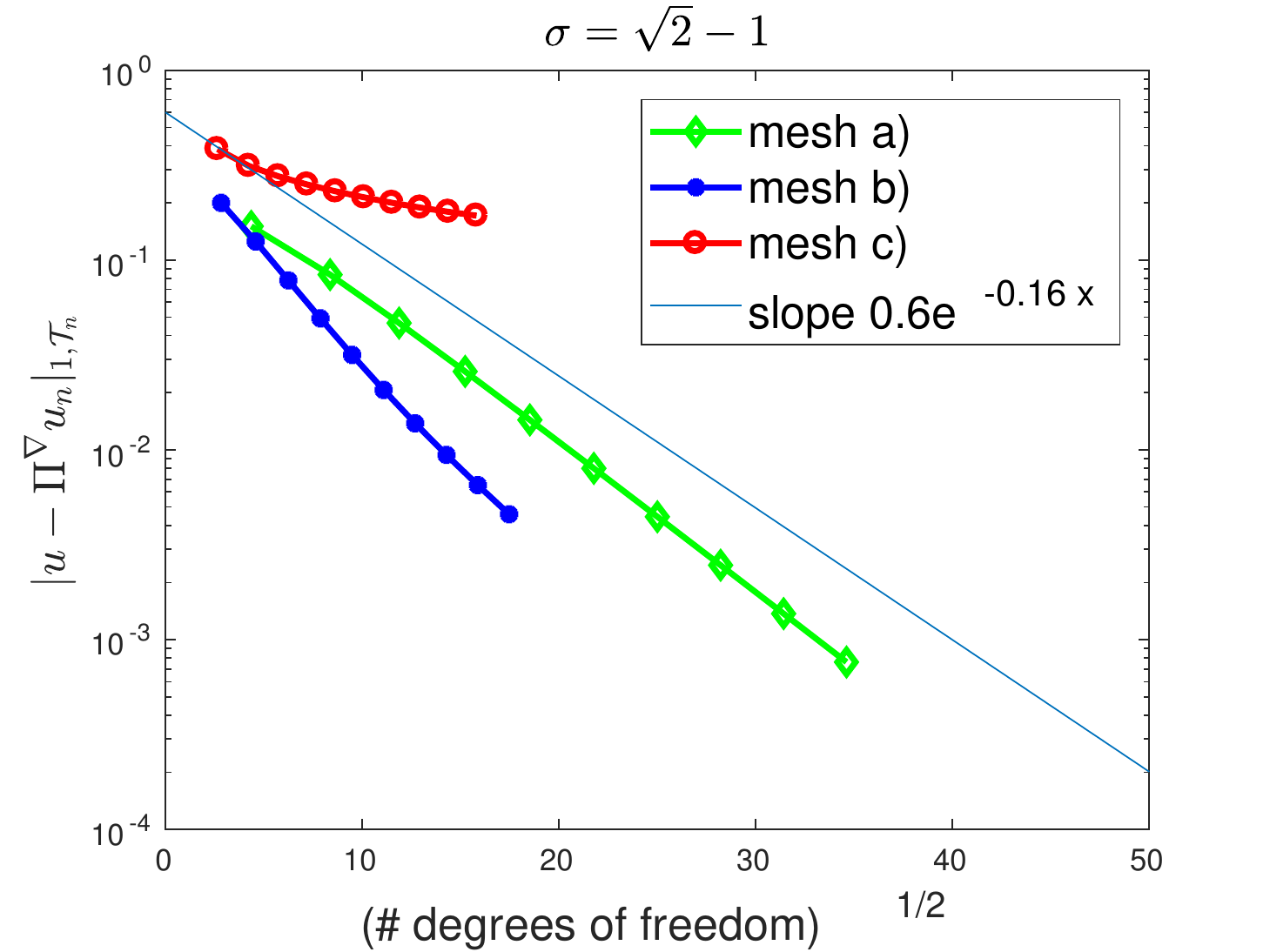}}
\subfigure {\includegraphics [angle=0, width=0.32\textwidth]{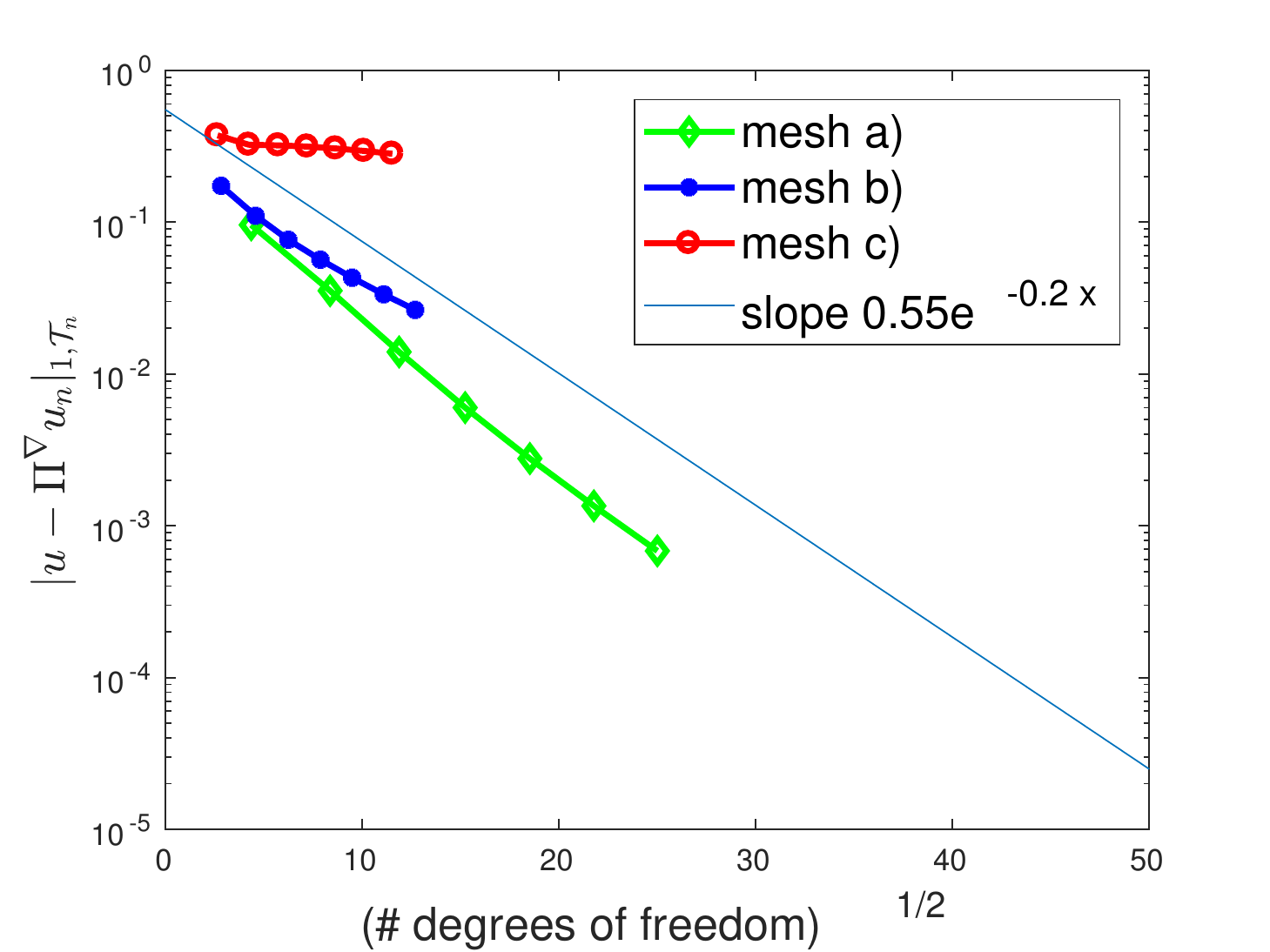}}
\caption{Error \eqref{computable error} on the three sequences of graded meshes made of elements as those in Figure \ref{figure possible geometric decompositions}.
We consider here the $\h\p$ version of the method.
We denote with a), b) and c) the meshes whose elements are as in Figure \ref{figure possible geometric decompositions} (left), (center) and (right), respectively.
The geometric refinement parameters are $\sigma=\frac{1}{2}$ (left), $\sigma=\sqrt 2-1$ (center), $\sigma=(\sqrt 2-1)^2$ (right).
The vector of local degrees of accuracy is nonuniform and given by $\p_\E = j+1$. $j=0,\dots,n$, $n+1$ being the number of layers in $\taun$.
We depict for mesh a) the slope $\exp(-b\sqrt[2]{N})$ for some positive constant $b$, in order to check the exponential decay of the $H^1$ error.} \label{figure numerical test p mu graded}
\end{figure}
As already stated in Example \ref{example meshes}, the meshes as those in Figure~\ref{figure possible geometric decompositions} (right) do not satisy the assumption~(\textbf{D1}) and then, in principle, Theorem \ref{theorem exponential convergence} does not apply.
The numerical experiments in Figure~\ref{figure numerical test p uniform} and~\ref{figure numerical test p mu graded} reveal that in this case the convergence deteriorates after few $hp$ refinements, especially for small~$\sigma$.

On the other hand, the other two sequences of meshes, namely those whose elements are depicted in Figure \ref{figure possible geometric decompositions} (left) and (center), have the expected exponential decay.

Importantly, the exponential convergence is still observed also under choice \eqref{graded degrees of accuracy} of the local degrees of accuracy. Our conjecture is that Theorem \ref{theorem exponential convergence} holds under \eqref{graded degrees of accuracy} as well.
Nonetheless, we avoid to investigate this issue, on the one hand, in order to avoid additional technicalities, on the other, because the dimension of space $\Vn$ under choices \eqref{standard choice degree of accuracy} and \eqref{graded degrees of accuracy}
behaves like $n^2$ and $\frac{1}{2}n^2$, respectively. This means that the exponential decay is still valid with the same exponential rate in both cases.

\subsubsection{Numerical comparison between $\h\p$ harmonic VEM and $\h\p$ VEM} \label{subsubsection comparison between HVEM and VEM}
We also perform a numerical comparison between the performances of the harmonic VEM discussed so far and the standard $hp$ version of VEM for the approximation of corner singularities, see \cite{hpVEMcorner}.
The main difference is that in VEM internal degrees of freedom for each element are employed in order to take care of the approximation of the right-hand side in Poisson problems.
This obviously leads to a nontrivial growth of the dimension of the space of approximation and in particular it leads to a decay of the energy error of the following sort:
\begin{equation} \label{decay error hpVEM}
\vert \u - \un \vert_{1,\O} \lesssim \exp{(-b \sqrt [3]{N})},
\end{equation}
where $b$ is a positive constant independent of the discretization parameters and $N$ is the dimension of the virtual space; see \cite[Theorem 3]{hpVEMcorner}.

In Figure \ref{figure comparison}, we compare error \eqref{computable error} for the two methods
employing the meshes in Figure \ref{figure possible geometric decompositions} (left) and in Figure \ref{figure possible geometric decompositions} (center).
The grading parameter is $\sigma = \frac{1}{2}$.

In both cases, we consider a distribution of local degrees of accuracy as in \eqref{standard choice degree of accuracy}. We note that the stabilization of the VEM differs from the one introduced in \eqref{our choice stabilization} for the harmonic VEM.
For more details concerning the construction of the $hp$ VEM we refer to \cite{hpVEMcorner}.

\begin{figure}  [h]
\centering
\subfigure {\includegraphics [angle=0, width=0.49\textwidth]{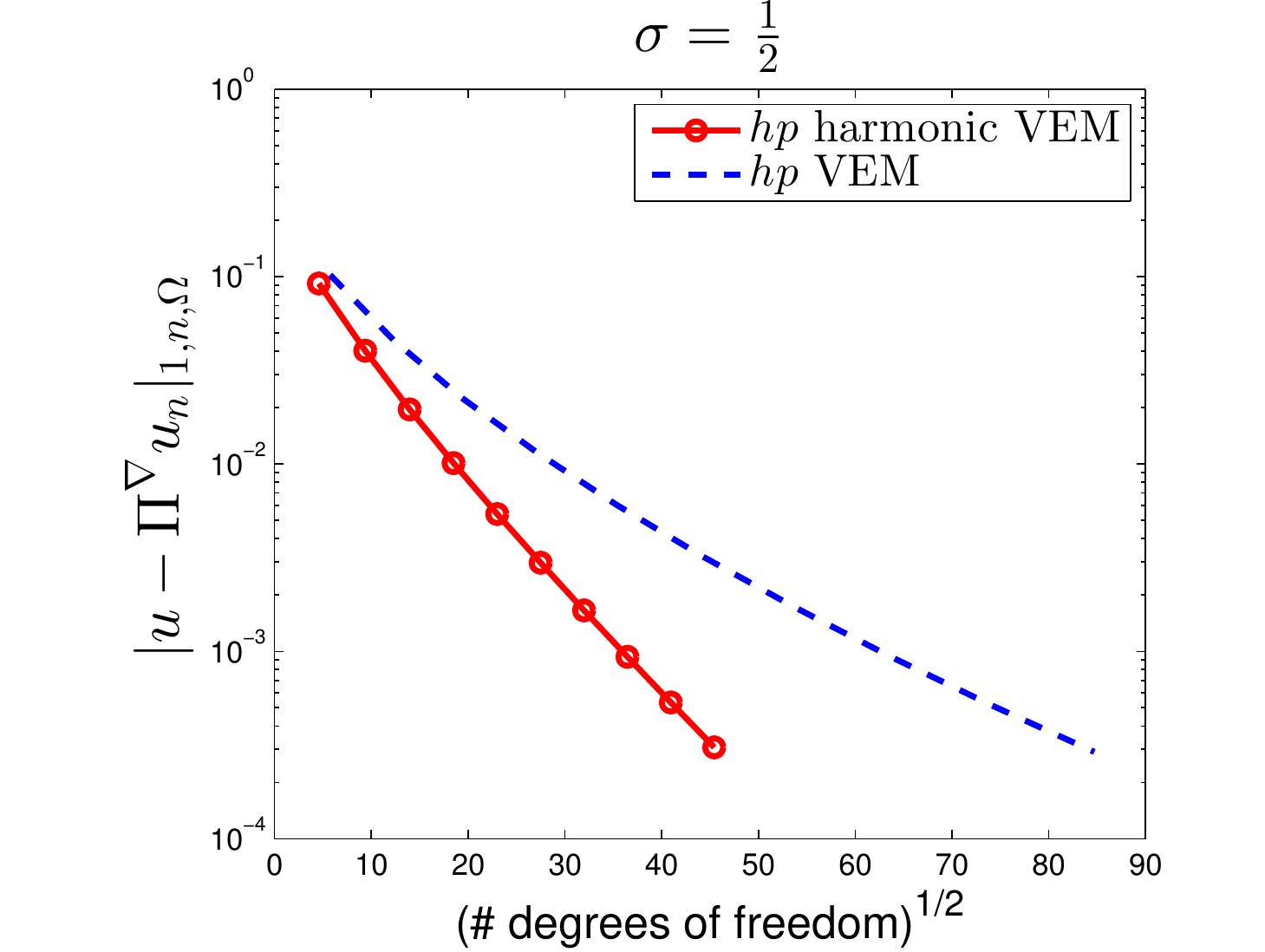}}
\subfigure {\includegraphics [angle=0, width=0.49\textwidth]{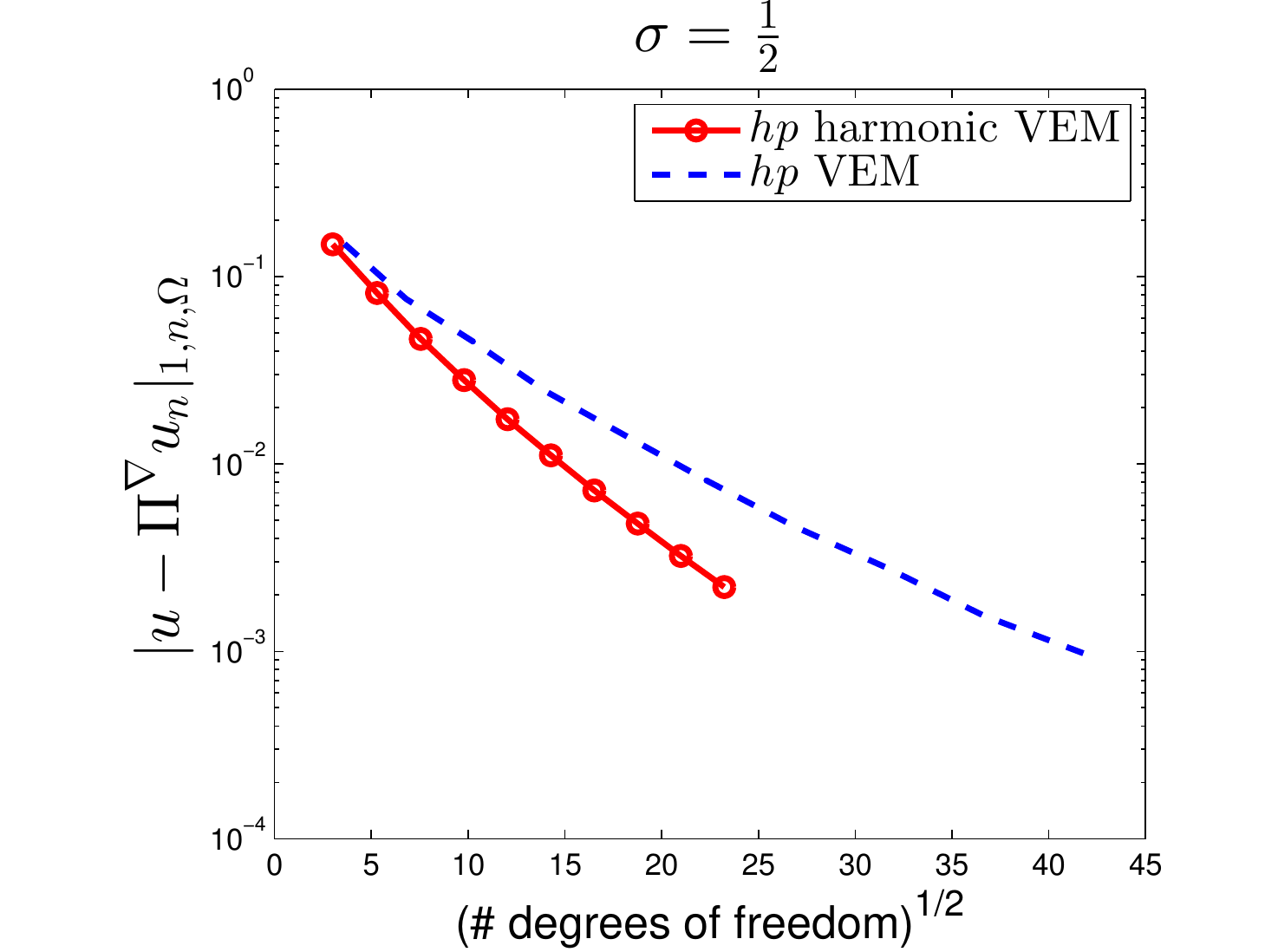}}
\caption{Comparison between the harmonic VEM and the VEM. Uniform degree of accuracy $\p=n+1$, $n+1$ being the number of layers.
We depict the error \eqref{computable error}. The geometric refinement parameters is $\sigma=\frac{1}{2}$.
Left: mesh in Figure \ref{figure possible geometric decompositions} (left). Right: mesh in Figure \ref{figure possible geometric decompositions} (center).} \label{figure comparison}
\end{figure}
From Figure \ref{figure comparison}, it is possible to observe the faster decay of error \eqref{computable error} when employing the $\h\p$ harmonic VEM,
see \eqref{error decay hpHVEM}, when compared to the same error employing the $\h\p$ VEM, see \eqref{decay error hpVEM}.

\section*{Aknowledgements}
L. M. acknowledges the support of the Austrian Science Fund (FWF) project F 65.
Both authors aknowledge that the major part of the research presented in this paper has been carried out at the Institute of Mathematics of the University of Oldenburg, Germany.
Moreover, they wish to thank Prof. M. J. Melenk of the Technische Universit\"at Wien for fruitful discussions on the topic.

{\footnotesize
\bibliography{bibliogr}
}
\bibliographystyle{plain}
\end{document}